\newtheorem{thm}{Th\'eor\`eme}[section]
\newtheorem{prop}[thm]{Proposition}
\newtheorem{lemme}[thm]{Lemme}
\newtheorem{cor}[thm]{Corollaire}
\newtheorem{question}[thm]{Question}
\newtheorem{hyp}[thm]{Hypoth\`ese}
\newtheorem{hyps}[thm]{Hypoth\`eses}
\theoremstyle{definition}
\newtheorem{defi}[thm]{D\'efinition}
\newtheorem{notas}[thm]{Notations}
\newtheorem{nota}[thm]{Notation}
\theoremstyle{remark}
\newtheorem{rem}[thm]{Remarque}
\newcommand{\termin}[1]{{\em #1}}
\newcommand{\bA}{\mathbf{A}}\newcommand{\bC}{\mathbf{C}}
\newcommand{\bL}{\mathbf{L}}
\newcommand{\bN}{\mathbf{N}}\newcommand{\bP}{\mathbf{P}}
\newcommand{\bQ}{\mathbf{Q}}
\newcommand{\bZ}{\mathbf{Z}}
\newcommand{\cC}{{\mathcal C}}\newcommand{\cD}{{\mathcal D}}
\newcommand{\cH}{{\mathcal H}}
\newcommand{\cI}{{\mathcal I}}\newcommand{\cJ}{{\mathcal J}}
\newcommand{\cM}{{\mathcal M}}
\DeclareMathAlphabet{\eulercal}{U}{eus}{m}{n}
\newcommand{\ecC}{{\eulercal C}}
\newcommand{\ecF}{{\eulercal F}}
\newcommand{\ecG}{{\eulercal G}}\newcommand{\ecH}{{\eulercal H}}
\newcommand{\ecO}{{\eulercal O}}\newcommand{\ecP}{{\eulercal P}}
\DeclareMathAlphabet{\beulercal}{U}{eus}{b}{n}
\newcommand{\mfp}{\mathfrak{p}}
\newcommand{\scD}{\mathscr{D}}
\newcommand{\scX}{\mathscr{X}}
\newcommand{\imply}{\Rightarrow}
\newcommand{\longto}{\longrightarrow}
\newcommand{\longlto}{\longleftarrow}
\newcommand{\isom}{\overset{\sim}{\to}}
\DeclareMathOperator{\End}{End}
\DeclareMathOperator{\Vect}{Vect}
\DeclareMathOperator{\Pic}{Pic}
\DeclareMathOperator{\Br}{Br}
\DeclareMathOperator{\Aut}{Aut}
\DeclareMathOperator{\Sym}{Sym}
\DeclareMathOperator{\Alt}{Alt}
\DeclareMathOperator{\rg}{rg}
\DeclareMathOperator{\Ker}{Ker}
\DeclareMathOperator{\im}{Im}
\DeclareMathOperator{\Hom}{Hom}
\DeclareMathOperator{\Sup}{Sup}
\DeclareMathOperator{\Max}{Max}
\DeclareMathOperator{\Spec}{Spec}
\DeclareMathOperator{\Gal}{Gal}
\DeclareMathOperator{\Tr}{Tr}
\DeclareMathOperator{\Id}{Id}
\DeclareMathOperator{\Irr}{Irr}
\DeclareMathOperator{\Ind}{Ind}
\let\leq\leqslant
\let\geq\geqslant
\newcommand{\sumu}[1]{\underset{#1}{\sum}}
\newcommand{\produ}[1]{\underset{#1}{\prod}}
\newcommand{\oplusu}[1]{\underset{#1}{\oplus}}
\newcommand{\wedgeo}[1]{\overset{#1}{\wedge}}
\newcommand{\disju}[1]{\underset{#1}{\bigsqcup}}
\newcommand{\veeu}[1]{\underset{#1}{\vee}}
\newcommand{\eps}{\varepsilon}
\newcommand{\vide}{\varnothing}
\newcommand{\eqdef}{\overset{\text{{\tiny{d\'ef}}}}{=}}
\newcommand{\ind}{\mathbf{1}}
\newcommand{\wt}{\widetilde}
\newcommand{\wh}{\widehat}
\newcommand{\sym}{\mathfrak{S}}
\newcommand{\abs}[1]{\left| #1 \right|} 
\newcommand{\card}[1]{\left[#1\right]} 
\newcommand{\map}[4]{
\begin{array}{rcl}
#1 & \longto & #2 \\
#3 & \longmapsto & #4 
\end{array} 
}
\newcommand{\bigo}[2]{\ecO_{#1}\left(#2\right)} 
\newcommand{\inv}{\times}
\DeclareMathOperator{\triv}{triv}
\DeclareMathOperator{\ChMot}{CHM}
\DeclareMathOperator{\MA}{MA}
\DeclareMathOperator{\Var}{Var}
\DeclareMathOperator{\Fr}{Fr}
\DeclareMathOperator{\Poinc}{Poinc}
\DeclareMathOperator{\Poincabs}{Poinc_{abs}}
\DeclareMathOperator{\Conju}{Conj}
\DeclareMathOperator{\bConj}{\textbf{Conj}}
\DeclareMathOperator{\Vol}{\mathscr{V}}
\DeclareMathOperator{\Volm}{\mathscr{V}_{\text{{\tiny mot}}}}
\DeclareMathOperator{\Trp}{\Tr_{\mfp}}
\DeclareMathOperator{\Ar}{Ar}
\DeclareMathOperator{\GrVect}{GrVect}
\newcommand{\classe}[1]{\left[#1\right]}
\newcommand{\str}[1]{\mathscr{O}_{#1}}
\newcommand{\rhop}{\rho^{\text{op}}}
\newcommand{\rhoNS}{\rho_{_{\text{NS}}}}
\newcommand{\rhoNSp}{\rho_{_{\text{NS},\mfp}}}
\newcommand{\Gop}{G^{\text{op}}}
\newcommand{\sep}[1]{#1^{\text{s}}}
\newcommand{\clo}[1]{\overline{#1}}
\renewcommand{\card}[1]{\left|#1\right|} 
\newcommand{\Poincl}{\Poinc_{\ell}}
\newcommand{\Hl}{H_{\ell}}
\newcommand{\polcar}{\mathscr{P}}
\newcommand{\mk}{\kochmk{\bQ}}
\newcommand{\mkq}{\kochmkq{\bQ}}
\newcommand{\hml}{\wh{[\mkq]}^{\Poincl}}
\newcommand{\hmabs}{\wh{\mkq^{\Poincabs}}}
\newcommand{\Frp}{\Fr_{\mfp}}
\newcommand{\fil}{\ecF_{\varphi}}
\newcommand{\kpff}{K_0(\text{PFF}_k)}
\newcommand{\Ql}{\bQ_{\ell}}
\newcommand{\Zl}{\bZ_{\ell}}
\newcommand{\galabs}[1]{\ecG_{#1}}
\newcommand{\gkql}{\galabs{k}\text{-}\Ql}
\newcommand{\GQvect}{G\text{-}\bQ\text{-}\Vect}
\newcommand{\kogkql}{K_0(\gkql)}
\newcommand{\kogkqlq}{K_0(\gkql)\otimes{\bQ}}
\newcommand{\courbe}{\mathscr{C}}
\newcommand{\irrq}[1]{\Irr_{\bQ}(#1)}
\newcommand{\irrE}[1]{\Irr_{E}(#1)}
\newcommand{\Conj}[1]{\Conju(#1)}
\newcommand{\Conjc}[1]{\Conju_\text{c}(#1)}
\newcommand{\bconj}[1]{\bConj(#1)}
\newcommand{\bconjc}[1]{\bConj_\text{c}(#1)}
\newcommand{\chm}[2]{\ChMot(#1)_{#2}}
\newcommand{\chmk}[1]{\chm{k}{#1}}
\newcommand{\AMk}[1]{\MA(k)_{#1}}
\newcommand{\kochm}[2]{K_0\left(\chm{#1}{#2}\right)}
\newcommand{\kochmk}[1]{\kochm{k}{#1}}
\newcommand{\kochmkq}[1]{K_0\left(\chmk{#1}\right)\otimes {\bQ}}
\newcommand{\koAMk}[1]{K_0\left(\AMk{#1}\right)}
\newcommand{\kovchmk}[1]{K_0^{\textnormal{\scriptsize var}}\!\left(\chmk{#1}\right)}
\newcommand{\kovchmkq}[1]{K_0^{\textnormal{\scriptsize var}}\!\left(\chmk{#1}\right)_{\bQ}}
\newcommand{\symb}[1]{\left[#1\right]}
\newcommand{\symbv}[1]{\left[#1\right]_{\textnormal{\scriptsize var}}}
\newcommand{\frev}[1]{\varphi_{{}_{#1}}}
\newcommand{\etak}[1]{\eta_{{}_{#1}}}
\newcommand{\psin}[1]{\psi_{{}_{#1}}}
\newcommand{\chil}{\chi_{\ell}}
\newcommand{\chif}{\chi_{{}_{\text{\textnormal{\scriptsize form}}}}}
\newcommand{\chiv}{\chi_{{}_{\text{\textnormal{\scriptsize var}}}}}
\newcommand{\chieq}{\chi_{{}_{\text{\textnormal{\scriptsize eq}}}}}
\newcommand{\vark}{\Var_k}
\newcommand{\varC}{\Var_{\bC}}
\newcommand{\kovark}{K_0(\vark)}
\newcommand{\kovarC}{K_0(\varC)}
\newcommand{\gvark}{G\text{-}\Var_k}
\newcommand{\kogvark}{K_0(\gvark)}
\newcommand{\gk}[1]{\galabs{k}\text{-}#1}
\newcommand{\kogk}[1]{K_0(\gk{#1})}
\newcommand{\Zv}{Z_{\text{\textnormal{\scriptsize{var}}}}}
\newcommand{\Zm}{Z_{\text{\textnormal{\scriptsize{mot}}}}}
\newcommand{\ZHW}{Z_{\text{HW}}}
\newcommand{\ZHm}{Z_{\text{H},\text{\textnormal{\scriptsize{mot}}}}}
\newcommand{\ZHv}{Z_{\text{H},\text{\textnormal{\scriptsize{var}}}}}
\newcommand{\ZH}{Z_{\text{H}}}
\newcommand{\ZHp}{Z_{\text{H},\mfp}}
\newcommand{\Lm}{L_{\text{\textnormal{\scriptsize{mot}}}}}
\newcommand{\Lar}{L_{\text{Ar}}}
\newcommand{\LmDM}{L^{\text{DM}}_{\textnormal{\scriptsize{mot}}}}
\newcommand{\LmDMnr}{L^{\text{DM,nr}}_{\textnormal{\scriptsize{mot}}}}
\newcommand{\phinv}{\Phi_{n,\text{\textnormal{\scriptsize{var}}}}}
\newcommand{\phinvar}[1]{\phinv(#1)}
\newcommand{\psinvar}[1]{\psi_{n,\text{\textnormal{\scriptsize{var}}}}(#1)}
\newcommand{\psidvar}[1]{\psi_{d,\text{\textnormal{\scriptsize{var}}}}(#1)}
\newcommand{\fICJD}{f^{\cI,\cC}_{\cJ,\cD}}
\newcommand{\dICJD}{d^{\cI,\cC}_{\cJ,\cD}}
\newcommand{\psinx}[1]{\psi_n(#1)}
\newcommand{\psidx}[1]{\psi_d(#1)}
\title[Nombre de Tamagawa motivique]{
Fonctions $L$ d'Artin et nombre de Tamagawa motiviques
}
\author{David Bourqui}
\address{I.R.M.A.R\\Campus de Beaulieu\\
35042 Rennes cedex \\ France}
\email{david.bourqui@univ-rennes1.fr}
\date{}
\begin{document}
\frontmatter
\begin{abstract}
Dans la premi\`ere partie de ce texte, nous d\'efinissons des fonctions
$L$ d'Artin motivique \`a l'aide d'un produit eulerien motivique, et
montrons qu'elles co\"\i ncident avec les fonctions introduites
par Dhillon et Minac dans \cite{DhMi:motivic_chebotarev}. Dans la seconde partie, nous d\'efinissons, sous
certaines conditions,  un nombre de Tamagawa motivique et montrons qu'il se sp\'ecialise sur le nombre de
Tamagawa usuel d\'efini par Peyre dans le cadre des conjectures de Manin
sur le nombre de points de hauteur born\'ee des vari\'et\'es de Fano.
\end{abstract}
\begin{altabstract}
In the first part of this text,  
we define motivic Artin $L$-fonctions 
via a motivic Euler product,
and show that they coincide with
the functions introduced by Dhillon and Minac dans \cite{DhMi:motivic_chebotarev}. In the second part,
we define under some assumptions a motivic Tamagawa number and show
that it specializes to the Tamagawa number introduced by Peyre in the context
of Manin's conjectures about rational points of bounded height on Fano varieties.
\end{altabstract} 
\subjclass{
14G10 14C35  (11M41 12E30 14J45) 
}
\keywords{Fonction $L$ d'Artin motivique, nombre de Tamagawa, nombre de Tamagawa
  motivique, produit eulerien motivique, fonction zeta des hauteurs}
\altkeywords{Motivic Artin $L$-function, Tamagawa number, motivic Tamagawa number, 
 motivic Euler product, height zeta function}
\thanks{Je remercie Florian Ivorra pour de tr\`es utiles discussions}
\maketitle
\mainmatter
\section{Introduction}
Comme l'ont illustr\'e Denef et Loeser dans \cite{DeLo:rat_gen_series},
les propri\'et\'es de nombre de s\'eries rationnelles issues de la g\'eom\'etrie arithm\'etique
sont de nature motivique : elles s'obtiennent naturellement par sp\'ecialisation de
s\'eries \`a coefficients dans un anneau de Grothendieck de motifs et
leur propri\'et\'es se lisent d\'ej\`a (au moins conjecturalement) sur ces
s\'eries motiviques. Dans la m\^eme veine, on peut se demander si
les propri\'et\'es des fonctions z\^eta des hauteurs, \'etudi\'ees dans le cadre
des conjectures de Manin sur les points de hauteur born\'ee (cf. par
exemple \cite{Pey:bordeaux} et \cite{Pey:bki}) sont de nature
motivique. Il est \`a noter qu'en g\'en\'eral on ne s'attend pas \`a ce que  de telles s\'eries soient 
rationnelles (cf. \cite[{\em in fine}]{BaTs:anis}).

Dans ce texte, nous montrons que l'on peut, dans certains cas, 
donner une version motivique naturelle du nombre de Tamagawa d\'efini
par Peyre
qui appara\^\i t
conjecturalement dans la partie principale de la fonction z\^eta des hauteurs.
Dans le cas classique, le volume ad\'elique d\'efinissant ce nombre de
Tamagawa peut s'exprimer comme un
produit eulerien. 
L'analogue motivique que nous proposons s'exprime comme un 
 \og produit eulerien motivique \fg~(notion qui appara\^\i t dans un
pr\'ec\'edent travail \cite{Bou:prod:eul:mot} consacr\'e aux fonctions z\^eta
des hauteurs motiviques des vari\'et\'es toriques),
dont on montre la convergence dans une certaine compl\'etion de l'anneau
de Grothendieck des motifs (th\'eor\`eme \ref{thm:princ}).
Cette compl\'etion est bas\'ee sur la filtration par le degr\'e du polyn\^ome de Poincar\'e virtuel
$\ell$-adique (i.e. par le poids). 
Un de ses int\'er\^ets est que la r\'ealisation \og comptage des points \fg~s'\'etend \`a certains \'el\'ements de la compl\'etion.
Nous remarquons qu'une approche similaire est utilis\'ee dans
\cite{BehDhi:motivic} et \cite{Eke:class}. 
Dans le cas d'un corps global, nous montrons que le nombre de Tamagawa motivique se
sp\'ecialise en presque toute place sur le nombre de Tamagawa classique
(th\'eor\`eme \ref{thm:princ:spe}).
Dans le cas d'un corps fini, nous montrons que le nombre de Tamagawa motivique se
sp\'ecialise  sur le nombre de Tamagawa classique (modulo une hypoth\`ese malhereusement
peu naturelle cf. th\'eor\`eme \ref{thm:princ:spe:fin}  et remarque \ref{rem:princ:spe:fin}).
Enfin dans le cas d'une surface, utilisant un r\'esultat de Kahn, Murre
et Pedrini nous donnons une version purement
motivique du nombre de Tamagawa motivique, c'est-\`a-dire que sa
convergence est d\'efinie \`a
l'aide d'un polyn\^ome de Poincar\'e virtuel absolu et non pas
$\ell$-adique (th\'eor\`eme \ref{thm:princ:mot}).

La d\'efinition de Peyre fait intervenir des facteurs de convergence 
qui sont les facteurs locaux de la fonction $L$ d'Artin associ\'ee au module
de Neron-Severi de $X$. Nous avons besoin d'un analogue motivique de
ces facteurs locaux. Une version motivique des fonctions $L$ d'Artin a
\'et\'e propos\'ee par Dhillon et Minac dans
\cite{DhMi:motivic_chebotarev}. Leur construction, quoique compacte et
\'el\'egante,
pr\'esente vis-\`a-vis de notre objectif le d\'efaut de  ne  justement pas faire
intervenir de facteurs locaux. C'est pourquoi nous donnons, dans la
premi\`ere partie de ce texte, une d\'efinition alternative des  fonctions
$L$ motivique via un produit
eulerien motivique.  Nous rappelons et pr\'ecisons les propri\'et\'es de la fonction $L$ de Dhillon et Minac \`a
la section \ref{sec:Lmot:DM}. Dans la section \ref{sec:L:prodeulmot}, 
nous d\'efinissons notre fonction $L$. Nous montrons qu'elle co\"\i ncide avec la fonction $L$ de
Dhillon et Minac et dans le cas d'un corps de nombres se sp\'ecialise en
presque toute place sur la fonction $L$ usuelle. Il est \`a noter que, stricto sensu, les
r\'esultats de la premi\`ere partie ne sont pas utilis\'es dans la seconde
(pour la plupart, ils ne sont d'ailleurs  valables a priori qu'en
caract\'eristique z\'ero, \`a cause notamment de l'utilisation du r\'esultat
de Denef et Loeser permettant d'associer de mani\`ere canonique un motif virtuel \`a une
telle formule, cf. th\'eor\`eme \ref{thm:denefloeser}).
Cependant : 1) ils justifient moralement le fait que les facteurs locaux utilis\'es dans la
d\'efinition du nombre de Tamagawa motivique sont les facteurs \og
naturels \fg~; 2) ils donnent une interpr\'etation arithm\'etique de la
fonction $L$ d'Artin motivique (pour un corps de caract\'eristique z\'ero quelconque)
et 3) ils permettent de d\'ecrire pr\'ecis\'ement les \og p\^oles \fg~de la
fonction $L$ motivique, ce qui est utile pour une formulation
d'une version motivique de la conjecture de Manin
(cf. les remarques \ref{rem:norm:bis} et la section \ref{subsec:lien:conj}).

Pour conclure cette introduction, il faut remarquer que la d\'efinition
propos\'ee du nombre de Tamagawa n'est
pas enti\`erement satisfaisante conceptuellement  : une \og bonne \fg~d\'efinition
devrait certainement utiliser une (hypoth\'etique) version globale de l'int\'egration
motivique (comme le remarquent les auteurs de \cite{BehDhi:motivic}
\`a propos d'une version motivique du nombre de Tamagawa d'un groupe alg\'ebrique).

\tableofcontents

\section{Quelques rappels et notations}
\subsection{Anneaux de Grothendieck de vari\'et\'es et de motifs}
Dans tout ce texte, les actions de groupes sont
des actions \`a gauche.
Si  $G$ est un groupe, on note $\Gop$ le groupe oppos\'e.
Soit $k$ un corps.
On note $\vark$ (respectivement $\gvark$) la cat\'egorie des
vari\'et\'es alg\'ebriques quasi-projectives d\'efinies sur $k$
(respectivement munie d'une action alg\'ebrique d'un groupe fini $G$) 
et $\kovark$ (respectivement
$\kogvark$) son anneau de Grothendieck (cf. \cite[13.1.1]{And:mot}).
Si $F$ est un anneau, on note $\chmk{F}$ la cat\'egorie des motifs de
Chow d\'efinis sur $k$ \`a coefficients dans $F$ (cf. \cite[Chapitre
4]{And:mot}) et $\kochmk{F}$
son anneau de Grothendieck (cf. \cite[13.2.1]{And:mot}).
La classe du motif de Lefschetz $\ind(-1)$  dans $\kochmk{F}$ est
not\'ee $\bL$.  Pour $d\in \bZ$, on note $M(-d)\eqdef M \otimes
\ind(-1)^{\otimes d}$ la $d$-\`eme torsion de Tate de $M$.
\begin{thm}[Gillet-Soul\'e,Guillen-Navarro-Aznar,Bittner]\label{thm:getal}
Soit $k$ un corps de  caract\'eristique z\'ero.
Il existe un unique morphisme d'anneaux
\begin{equation}
\chiv\,:\,\kovark\longto \kochmk{F}
\end{equation}
qui envoie la classe d'une vari\'et\'e projective et lisse $X$
sur la classe de son motif de Chow $h(X)$.
\end{thm}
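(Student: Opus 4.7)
Le plan naturel est d'utiliser la pr\'esentation de Bittner de $\kovark$ en caract\'eristique z\'ero, qui est \'etablie \`a partir de la r\'esolution des singularit\'es d'Hironaka et du th\'eor\`eme de factorisation faible d'Abramovich-Karu-Matsuki-W{\l}odarczyk. D'apr\`es cette pr\'esentation, l'anneau $\kovark$ est engendr\'e par les classes $\symb{X}$ de vari\'et\'es projectives lisses $X$, modulo les relations $\symb{\vide}=0$ et la relation d'\'eclatement
\begin{equation*}
\symb{\Bl_Y X}-\symb{E}=\symb{X}-\symb{Y},
\end{equation*}
o\`u $Y\hookrightarrow X$ est une immersion ferm\'ee de vari\'et\'es projectives lisses, $\Bl_Y X$ est l'\'eclat\'e de $X$ le long de $Y$, et $E$ est le diviseur exceptionnel.

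Je d\'efinirais alors $\chiv$ sur les g\'en\'erateurs en posant $\chiv(\symb{X})=\symb{h(X)}$, puis je v\'erifierais que les relations sont respect\'ees. La relation triviale pour le vide est imm\'ediate. Pour la relation d'\'eclatement, l'ingr\'edient cl\'e est la formule de Manin donnant le motif de Chow d'un \'eclat\'e lisse : si $Y$ est de codimension $c$ dans $X$, alors
\begin{equation*}
h(\Bl_Y X)\simeq h(X)\oplus \bigoplusu{i=1}{c-1} h(Y)(-i),
\end{equation*}
et, puisque $E$ est un fibr\'e projectif de rang $c-1$ au-dessus de $Y$, la formule du fibr\'e projectif donne
\begin{equation*}
h(E)\simeq \bigoplusu{i=0}{c-1} h(Y)(-i).
\end{equation*}
En soustrayant ces deux identit\'es dans $\kochmk{F}$, on obtient pr\'ecis\'ement $\symb{h(\Bl_Y X)}-\symb{h(E)}=\symb{h(X)}-\symb{h(Y)}$. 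Le caract\`ere multiplicatif de $\chiv$ se v\'erifie sur les g\'en\'erateurs projectifs lisses gr\^ace \`a la compatibilit\'e $h(X\times Y)\simeq h(X)\otimes h(Y)$.

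Pour l'unicit\'e, il suffit d'observer que, toujours gr\^ace \`a Hironaka, toute classe de $\kovark$ s'\'ecrit comme combinaison $\bZ$-lin\'eaire de classes de vari\'et\'es projectives lisses (par r\'ecurrence sur la dimension, en utilisant une compactification lisse et une r\'esolution du bord). Tout morphisme d'anneaux envoyant $\symb{X}$ sur $\symb{h(X)}$ pour $X$ projective lisse est donc enti\`erement d\'etermin\'e.

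L'obstacle principal r\'eside dans la v\'erification que la pr\'esentation de Bittner est bien applicable, c'est-\`a-dire que les deux relations mentionn\'ees sont effectivement suffisantes pour engendrer toutes les relations dans $\kovark$. C'est l\`a que l'hypoth\`ese de caract\'eristique nulle intervient de mani\`ere essentielle, \`a travers la factorisation faible : elle permet de r\'eduire n'importe quelle \'egalit\'e dans $\kovark$ \`a une suite d'\'eclatements et contractions de centres lisses. Une fois admis ce r\'esultat ainsi que la formule de Manin pour le motif d'un \'eclat\'e (qui ne n\'ecessite que la propret\'e, la lissit\'e et la formule du fibr\'e projectif pour les motifs de Chow), la construction de $\chiv$ devient formelle.
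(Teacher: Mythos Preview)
Your outline is correct and follows precisely Bittner's argument, which is one of the three attributions given in the theorem's header. Note, however, that the paper does not supply its own proof of this statement: it is stated as a known result with references to Gillet--Soul\'e, Guill\'en--Navarro-Aznar, and Bittner, and used as a black box thereafter. So there is no ``paper's proof'' to compare against; your sketch is the standard proof via the blow-up presentation and Manin's formula, and it is sound.
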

L'image de $\kovark$ par $\chiv$ sera not\'ee $\kovchmk{F}$.

Notons $\ecC(G,\bQ)$ le $\bQ$-espace vectoriel des fonctions
$\bQ$-centrales de $G$ dans $\bQ$ (i.e les fonctions $\alpha\,:\,G\to
\bQ$ qui v\'erifient $\alpha(x)=\alpha(y)$ d\`es que les sous-groupes 
$\langle x\rangle$ et $\langle y\rangle$ sont conjugu\'es.
On rappelle \`a pr\'esent
un cas particulier d'une version \'equivariante du th\'eor\`eme \ref{thm:getal},
due \`a Denef, Loeser, del Ba\~no et Navarro-Aznar (cf. \cite[theorem 6.1]{dBaNa:quotient}).
\begin{thm}
\label{thm:chiveq}
Soit $k$ un corps de caract\'eristique z\'ero et $G$ un groupe fini.
Il existe une unique famille de morphismes d'anneaux
\begin{equation}
\chieq(-,\alpha)\,:\,
\kogvark\to \kochmk{\bQ}\otimes \bQ
\end{equation}
index\'ee par $\alpha\in \ecC(G,\bQ)$
ayant les propri\'et\'es suivantes :
\begin{enumerate}
\item\label{item:thm:chiveq:projlisse}
si $X$ est une $k$-$G$-vari\'et\'e projective et lisse, $\rho$ une $\bQ$-repr\'esentation
lin\'eaire de dimension finie irr\'eductible de $G$ 
et
$
p_{\rho}\eqdef
\frac{1}{\card{G}}
\sum_{g\in G} \rho(g^{-1})\otimes [g]
$
l'idempotent de $V_{\rho}\otimes h(X)$ associ\'e,
alors on a 
\begin{equation}\label{eq:chieqXchirho}
\chieq(X,\chi_{\rho})=\symb{\im(p_{\rho})} ;
\end{equation}
\item
l'application $\alpha\mapsto \chieq(X,\alpha)$ est un morphisme de groupe.
\end{enumerate}
\end{thm}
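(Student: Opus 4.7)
The strategy is to reduce to the non-equivariant Theorem \ref{thm:getal} via an equivariant Bittner-type presentation of $\kogvark$, working with the $\bQ$-basis of $\ecC(G,\bQ)$ furnished by the characters of irreducible $\bQ$-representations.

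For uniqueness, recall that $\{\chi_\rho\}_{\rho\in\irrq{G}}$ is a $\bQ$-basis of $\ecC(G,\bQ)$, a classical consequence of Artin's theorem on rational characters. The first property prescribes the values $\chieq(X,\chi_\rho)$ for $X$ smooth projective, and the second property extends this by $\bQ$-linearity to determine $\chieq(X,\alpha)$ for every $\alpha\in\ecC(G,\bQ)$. In characteristic zero, equivariant resolution of singularities together with the equivariant weak factorization theorem yield an equivariant Bittner-type presentation: $\kogvark$ is generated by classes of smooth projective $G$-varieties, modulo the blow-up relation along smooth closed $G$-stable centres. This forces uniqueness of each $\chieq(-,\alpha)$ on all of $\kogvark$.

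For existence, one defines $\chieq(X,\chi_\rho)\eqdef\symb{\im(p_\rho)}$ on smooth projective $G$-varieties via \eqref{eq:chieqXchirho}, extends $\bQ$-linearly in $\alpha$ along the basis $\{\chi_\rho\}$, and then extends to $\kogvark$ through the equivariant Bittner presentation. The core check is that the blow-up relation is preserved: given a smooth closed $G$-stable $Y\subset X$ with blow-up $\widetilde X$ and exceptional divisor $E$, one must establish
\[
\chieq(\widetilde X,\alpha)+\chieq(Y,\alpha)=\chieq(X,\alpha)+\chieq(E,\alpha)
\]
in $\kochmk{\bQ}\otimes\bQ$. This reduces by linearity to $\alpha=\chi_\rho$ and follows by applying the idempotent $p_\rho$ term by term to the $G$-equivariant Chow motivic blow-up formula for $h(\widetilde X)$, together with the projective bundle decomposition of $h(E)$ over $h(Y)$, both of which remain $G$-equivariant when $Y$ is $G$-stable.

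I expect the main obstacle to be twofold: establishing the equivariant Bittner presentation itself, which requires $G$-equivariant resolution of singularities and weak factorization as non-trivial technical inputs, and then checking the equivariant blow-up relation at the Chow motivic level with $\bQ$-coefficients. The ring morphism property is then a further compatibility check between the idempotents $p_\rho$ and the Künneth decomposition $h(X\times Y)\cong h(X)\otimes h(Y)$ under the diagonal $G$-action, reducing again to the smooth projective case via Bittner. The full argument is carried out in \cite[Theorem 6.1]{dBaNa:quotient}, to which the authors refer.
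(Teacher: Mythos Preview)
The paper supplies no proof of its own: the theorem is quoted as a special case of \cite[Theorem 6.1]{dBaNa:quotient}, and you rightly note this in your closing sentence. Your sketch of existence and uniqueness via an equivariant Bittner presentation is a sound modern route; the presentation you need is exactly \cite[Lemma~7.1]{Bit:univ_eul_car}, which the paper itself invokes elsewhere. The original 1998 argument in \cite{dBaNa:quotient} predates Bittner and proceeds instead through the Guill\'en--Navarro-Aznar extension criterion for additive invariants, but your approach is by now a standard alternative and buys a cleaner reduction to the blow-up relation.

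There is, however, a genuine gap in your treatment of the ring-morphism claim. You reduce multiplicativity to a K\"unneth check on the idempotents $p_\rho$, but this already fails for $\rho=\triv$: one has $\chieq(X,\chi_{\triv})=[h(X)^G]=\chiv(X/G)$ by Theorem~\ref{thm:dbna}, and $(X\times Y)/G$ does not agree with $X/G\times Y/G$ in $\kochmk{\bQ}$ in general. For instance, with $G=\bZ/2$ swapping two $k$-points one finds $\chieq(X\times X,\chi_{\triv})=2$ while $\chieq(X,\chi_{\triv})^2=1$. Moreover, even were multiplicativity to hold for each basis element $\chi_\rho$, it would not propagate to arbitrary $\alpha\in\ecC(G,\bQ)$ by $\bQ$-linearity, since a sum of ring morphisms is not a ring morphism. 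This suggests that the phrase ``morphismes d'anneaux'' in the stated theorem is either a slip for ``morphismes de groupes''---which is all the paper ever actually uses---or that the formulation in \cite{dBaNa:quotient} carries additional structure not captured in this restatement. Either way, your final paragraph does not go through as written, and you should consult the cited source to determine the intended claim before attempting to justify it.
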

\begin{defi}\label{rem:car:non:nulle}\label{def:car:non:nulle}
Si $k$ est un corps de caract\'eristique non nulle, $G$ un groupe fini
et $X$ une $k$-$G$-vari\'et\'e projective et  lisse, on d\'efinit  $\chieq(X,\chi_{\rho})$ 
via la relation \eqref{eq:chieqXchirho}
puis par lin\'earit\'e $\chieq(X,\alpha)$ pour tout \'el\'ement $\alpha$ de $\ecC(G,\bQ)$.
\end{defi}
\begin{thm}[\cite{dBaNa:quotient}]\label{thm:dbna}
Soit $k$ un corps de caract\'eristique z\'ero, $G$ un groupe fini
et $X$ une $k$-$G$-vari\'et\'e projective et lisse.
Alors on a
\begin{equation}
\chiv(X/G)=\symb{h(X)^G}.
\end{equation}
\end{thm}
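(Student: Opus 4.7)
The plan is to deduce the statement from Theorem \ref{thm:chiveq} applied to the trivial character of $G$, combined with an identification of $\chiv(X/G)$ with the invariant part of the motive of $X$.

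First I would specialize Theorem \ref{thm:chiveq} to the trivial one-dimensional representation $\rho = \triv$ of $G$. In this case $V_{\triv} = \bQ$ and the idempotent
\[
p_{\triv} = \frac{1}{|G|} \sum_{g \in G} \rho(g^{-1}) \otimes [g] = \frac{1}{|G|} \sum_{g \in G} [g]
\]
acts on $V_{\triv} \otimes h(X) = h(X)$ as the usual averaging projector onto the $G$-invariant submotive. The formula \eqref{eq:chieqXchirho} then reads
\[
\chieq(X, \chi_{\triv}) = \symb{\im(p_{\triv})} = \symb{h(X)^G}.
\]
Hence the theorem is equivalent to the identity $\chiv(X/G) = \chieq(X, \chi_{\triv})$, and this is the statement I would aim to prove.

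To establish this identity, I would compare the two sides using a $G$-equivariant stratification of $X$ and equivariant resolution of singularities. More precisely, using Bittner's presentation of $\kovark$ by classes of smooth projective varieties modulo the blow-up relation (combined with its equivariant analogue used in \cite{dBaNa:quotient}), one may replace $X$ by a $G$-equivariant modification $\wt X$ so that $\wt X / G$ is itself smooth and projective. On such a smooth equivariant model the quotient morphism $\pi : \wt X \to \wt X / G$ is finite and flat of degree $|G|$, and the composition $\frac{1}{|G|}\, \pi^* \pi_*$ realizes $h(\wt X / G)$ as a direct summand of $h(\wt X)$ with projector exactly $p_{\triv}$; hence $h(\wt X / G) \simeq h(\wt X)^G$ in $\chmk{\bQ}$. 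The additivity of $\chiv$ and of $\chieq(-, \chi_{\triv})$ with respect to the cut-and-paste relations, together with their compatibility with equivariant blow-ups, then propagates the identity from the smooth quotient case to the general case.

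The main obstacle is that $X/G$ need not be smooth, so $\chiv(X/G)$ cannot simply be identified with a class of the form $\symb{h(X/G)}$; one must go through a singular-to-smooth reduction. The delicate point is to verify that the equivariant idempotent $p_{\triv}$ behaves well under $G$-equivariant blow-ups with smooth centres, which is precisely the technical heart of \cite{dBaNa:quotient}. Once this compatibility is granted, the argument sketched above yields the equality $\chiv(X/G) = \chieq(X, \chi_{\triv}) = \symb{h(X)^G}$.
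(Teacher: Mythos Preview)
The paper does not prove this statement; it is quoted from \cite{dBaNa:quotient} without argument. So there is no ``paper's own proof'' to compare against, and your task was really to sketch the content of that reference.

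Your overall strategy is sound and is indeed close in spirit to \cite{dBaNa:quotient}: identify $[h(X)^G]$ with the image of the averaging projector, and then match it with $\chiv(X/G)$ via an inductive/blow-up argument. You also correctly flag that the compatibility of $p_{\triv}$ with equivariant blow-ups along smooth $G$-stable centres is the technical core.

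Two points deserve attention. First, be careful with the appeal to Theorem~\ref{thm:chiveq}: it comes from the same source \cite{dBaNa:quotient}, and in that paper the construction of $\chieq$ and the identity $\chiv(X/G)=[h(X)^G]$ are developed together rather than the latter being a corollary of the former. Your argument does not really need the existence of $\chieq$ as a morphism on all of $\kogvark$; it only needs the definition of $p_{\triv}$ on smooth projective $G$-varieties and its behaviour under blow-ups, so you should phrase it that way to avoid apparent circularity.

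Second, the step ``replace $X$ by a $G$-equivariant modification $\wt X$ so that $\wt X/G$ is smooth'' is not the natural route and is not obviously available in one stroke. The standard argument instead stratifies by orbit type (or removes the non-free locus), uses that on the free part the quotient is smooth and $h(U)^G\simeq h(U/G)$ after suitable compactification, and handles the lower-dimensional complement by induction on dimension combined with Bittner's blow-up presentation. Your computation in the smooth-quotient case is correct (miracle flatness makes $\pi$ finite flat, and $\pi^{\ast}\pi_{\ast}=\sum_{g} g^{\ast}$ identifies the two projectors), but you should replace the global ``make the quotient smooth'' manoeuvre by this stratification/induction scheme.
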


\subsection{Caract\'eristique d'Euler-Poincar\'e $\ell$-adique et nombre de
  points modulo $\mfp$} 
Pour tout corps $k$, on note $\sep{k}$ une cl\^oture s\'eparable
de $k$ et $\galabs{k}=\Gal(\sep{k}/k)$ le groupe de Galois absolu
de $k$. Pour tout nombre premier $\ell$,
on note $\kogkql$ l'anneau de Grothendieck de la cat\'egorie
des $\Ql$-espaces vectoriels de dimension finie munis d'une action 
continue de $\galabs{k}$. 
On supposera toujours $\ell$ distinct de la caract\'eristique de $k$,
et on fixera un plongement $\Ql\hookrightarrow \bC$.
La
caract\'eristique d'Euler-Poincar\'e $\ell$-adique est le morphisme d'anneaux
\begin{equation}
\chil\,:\,\kovark\longto \kogkql
\end{equation} 
d\'efini par $\chil(\symb{X})=\sum_i (-1)^i \symb{H^i_c(\sep{X},\Ql)}$, o\`u
$\sep{X}\eqdef X\times_k \sep{k}$. Si $k$ est de caract\'eristique z\'ero,
$\chil$ se factorise par $\chiv$.

On suppose \`a pr\'esent que $k$ est un corps global. 
Soit $\mfp$ une place finie de $k$. On note $\kappa_{\mfp}$ son corps r\'esiduel,
$I_{\mfp}\subset \galabs{k}$ un groupe d'inertie en $\mfp$  et
$\Fr_{\mfp}$ un Frobenius en $\mfp$.
Le \termin{nombre de points modulo $\mfp$} d'un \'el\'ement $V$ de $\kogkql$ 
est $\Tr(\Fr_{\mfp}|V^{I_{\mfp}})$. On le notera $\Trp(V)$.
Si $X$ est une $k$-vari\'et\'e, pour presque tout $\mfp$ on a 
\begin{equation}
\Trp(\chil(X))=\card{X(\kappa_{\mfp})},
\end{equation}
o\`u $X(\kappa_{\mfp})$ d\'esigne (abusivement) l'ensemble des
$\kappa_{\mfp}$-points d'un mod\`ele de $X$ (ainsi
$\card{X(\kappa_{\mfp})}$ est bien d\'efini \og modulo un nombre fini de $\mfp$\fg~).

\subsection{Objets de dimension finie et rationnalit\'e}
Pour tout anneau $A$, on note $1+A[[t]]^+$ le sous-groupe de
$A[[t]]^{\inv}$ form\'e des \'el\'ements de terme constant \'egal \`a $1$ et
$1+A[t]^+$ le sous-monoïde des polyn\^omes de
$1+A[[t]]^+$. 
On dit qu'un \'el\'ement $f$ de $1+A[[t]]^+$ est \termin{rationnel} s'il
existe $g\in 1+A[t]^+$ tel que $g\,f\in 1+A[t]^+$.

Soit $\mathscr{A}$ une cat\'egorie tensorielle pseudo-ab\'elienne
$F$-lin\'eaire, o\`u $F$ est une $\bQ$-alg\`ebre.
Soit $G$ un groupe fini, $M$ un objet de $\mathscr{A}$ muni d'une
action de $G$ et $\rho$ une $F$-repr\'esentation lin\'eaire de dimension
finie de $G$. On note
$(M\otimes V_{\rho})^G$ l'image dans $M\otimes V_{\rho}$ du projecteur
$\frac{1}{\card{G}}\sum_{g\in G} g\otimes \rho(g)$.
Dans le cas particulier de l'action de $\sym_n$ sur $M^{\otimes n}$ et
$\rho$ est la repr\'esentation triviale (respectivement la signature),
cette image est not\'ee $\Sym^n M$ (respectivement $\Alt^n M$).
Suivant la terminologie de  \cite{And:bki}, 
un objet  $M$ de $\mathscr{A}$
est dit \termin{pair} (respectivement
\termin{impair}) s'il v\'erifie $\Alt^n M=0$ pour $n>\!\!>0$
(respectivement $\Sym^n M=0$ pour $n>\!\!>0 $. Un objet $M$ de
$\mathscr{A}$ est dit 
\termin{de
dimension finie} s'il s'\'ecrit comme somme directe d'un objet pair et d'un objet impair.
Pour tout objet $M$, on pose
\begin{equation}
Z_{\mathscr{A}}(M,t)\eqdef \sum_{n\geq 0} \symb{\Sym^n M}t^n\in 1+K_0(\mathscr{A})[[t]]^+.
\end{equation}
On a dans $K_0(\mathscr{A})[[t]]$ 
la formule (cf. eg \cite[Lemma 4.1]{Hei:func_eq})
\begin{equation}\label{eq:symnaltn}
Z_{\mathscr{A}}(M,t)\,
\left(\sumu{n\geq 0} \symb{\Alt^n M}(-1)^n\,t^n\right)=1
\end{equation} 
d'o\`u d\'ecoule la proposition suivante.
\begin{prop}[Andr\'e]\label{prop:zm:dimf}
Soit $M$ un objet de $\mathscr{A}$. Si $M$ est pair (respectivement
impair) alors $Z_{\mathscr{A}}(M,t)\in 1+\mathscr{A}[t]^+$
(respectivement $Z_{\mathscr{A}}(M,t)^{-1}\in 1+\mathscr{A}[t]^+$). En particulier,
pour tout objet $M$ de dimension finie, $Z_{\mathscr{A}}(M,t)$ est rationnelle.
\end{prop}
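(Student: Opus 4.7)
\emph{Plan de d\'emonstration.}
La strat\'egie que j'adopterais consiste \`a tout d\'eduire de l'identit\'e \eqref{eq:symnaltn}
\begin{equation*}
Z_{\mathscr{A}}(M,t)\,\Bigl(\sumu{n\geq 0}\symb{\Alt^n M}(-1)^n t^n\Bigr)=1
\end{equation*}
valable dans $K_0(\mathscr{A})[[t]]$. Pour les deux premi\`eres assertions, je distinguerais les deux cas sym\'etriques. Si $\Alt^n M=0$ pour $n\gg 0$, le second facteur ci-dessus est un polyn\^ome de terme constant $\symb{\ind}=1$, donc un \'el\'ement de $1+\mathscr{A}[t]^+$ ; l'identit\'e le caract\'erise alors comme l'inverse multiplicatif de $Z_{\mathscr{A}}(M,t)$ dans $K_0(\mathscr{A})[[t]]^{\inv}$. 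Sym\'etriquement, si $\Sym^n M=0$ pour $n\gg 0$, alors $Z_{\mathscr{A}}(M,t)$ est par d\'efinition un polyn\^ome de terme constant $1$, donc directement un \'el\'ement de $1+\mathscr{A}[t]^+$.

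Pour la rationalit\'e lorsque $M$ est de dimension finie, je d\'ecomposerais $M=M_+\oplus M_-$ avec $M_+$ pair et $M_-$ impair, puis utiliserais la formule
\begin{equation*}
\Sym^n(M_+\oplus M_-)\simeq \bigoplusu{a+b=n}\Sym^a M_+\otimes \Sym^b M_-
\end{equation*}
obtenue en d\'ecomposant $(M_+\oplus M_-)^{\otimes n}$ sous l'action de $\sym_n$ et en projetant sur la composante invariante (ce qui utilise pleinement la pseudo-ab\'elianit\'e et la $\bQ$-lin\'earit\'e de $\mathscr{A}$). Au niveau de $K_0(\mathscr{A})[[t]]$, cette formule se traduit par l'\'egalit\'e $Z_{\mathscr{A}}(M,t)=Z_{\mathscr{A}}(M_+,t)\cdot Z_{\mathscr{A}}(M_-,t)$, produit dont l'un des facteurs est un polyn\^ome et l'autre l'inverse d'un polyn\^ome d'apr\`es ce qui pr\'ec\`ede ; la d\'efinition de la rationalit\'e rappel\'ee avant l'\'enonc\'e est donc automatiquement satisfaite.

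L'obstacle principal n'est pas tr\`es s\'erieux : il r\'eside dans la justification soigneuse de la formule de d\'ecomposition de $\Sym^n$ d'une somme directe, qui se ram\`ene \`a la manipulation des projecteurs $\sym_n$-\'equivariants (via la formule de Frobenius pour les repr\'esentations induites, la composante invariante de l'induite depuis $\sym_a\times \sym_b$ s'identifiant au produit tensoriel des invariants). Au del\`a de ce point purement formel, toute la d\'emonstration se r\'eduit \`a une manipulation de la relation \eqref{eq:symnaltn} dans $1+K_0(\mathscr{A})[[t]]^+$.
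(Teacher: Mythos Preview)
Your approach is exactly the paper's: the proposition is stated immediately after the identity~\eqref{eq:symnaltn} with the words ``d'o\`u d\'ecoule la proposition suivante'', and your argument is the natural unpacking of that remark. The treatment of the finite-dimensional case via the multiplicativity $Z_{\mathscr{A}}(M_+\oplus M_-,t)=Z_{\mathscr{A}}(M_+,t)\,Z_{\mathscr{A}}(M_-,t)$ is also the standard one and needs no further justification for the purpose at hand.

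One point deserves comment. Your argument actually establishes the \emph{swapped} conclusions: if $M$ is pair ($\Alt^n M=0$ for $n\gg 0$) you show that $\sum_n(-1)^n[\Alt^n M]\,t^n=Z_{\mathscr{A}}(M,t)^{-1}$ is a polynomial, whereas if $M$ is impair ($\Sym^n M=0$ for $n\gg 0$) you show that $Z_{\mathscr{A}}(M,t)$ itself is a polynomial. This is the mathematically correct version (for instance $M=\ind$ is pair and $Z_{\mathscr{A}}(\ind,t)=(1-t)^{-1}$ is not a polynomial), and it is also the version the paper actually uses downstream: in Proposition~\ref{prop:lmdmdimzero} pair Artin motives give $Z^{-1}$ polynomial, and in Proposition~\ref{prop:lmdmcourbe} the impair motive $h^1(\courbe)$ gives $\Zm(h^1(\courbe),t)$ polynomial. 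So the statement as printed has the two cases inverted; your proof is correct but proves the corrected statement, and you should say so explicitly rather than silently proving something formally different from what is written.
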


\subsection{Fonctions z\^eta de Hasse-Weil g\'eom\'etrique et motivique}

Soit $k$ un corps et $X$ une $k$-vari\'et\'e quasi-projective. 
On d\'efinit, suivant Kapranov, la fonction z\^eta de Hasse-Weil
g\'eom\'etrique de $X$
\begin{equation}
\Zv(X,t)\eqdef \sum_{n\geq 0} \symb{\Sym^n X}\,t^n\,\in 1+\kovark[[t]]^+.
\end{equation}
Il existe 
un unique morphisme de groupes
\begin{equation}
\Zv(\,.\,,t)\,:\,\kovark\longto 1+\kovark[[t]]^+
\end{equation}
qui envoie la classe d'une vari\'et\'e quasi-projective $X$ sur $\Zv(X,t)$.

Soit $F$ un corps de caract\'eristique z\'ero.
Pour tout objet $M$ de $\chmk{F}$ on d\'efinit, suivant Andr\'e, la fonction z\^eta de
Hasse-Weil motivique de $M$
\begin{equation}
\Zm(M,t)\eqdef Z_{\chmk{F}}(M,t)=\sum_{n\geq 0} \symb{\Sym^n(M)}\,t^n\,\in\,1+\kochmk{F}[[t]]^+.
\end{equation}
On a en particulier, pour tout entier $d$,
\begin{equation}\label{eq:zmd}
\Zm(M(-d),t)=\Zm(M,\bL^d\,t).
\end{equation}
Il existe un unique morphisme de groupes
\begin{equation}
\Zm(\,.\,,t)\,:\,\kochmk{F}\longto 1+\kochmk{F}[[t]]^+
\end{equation}
qui envoie la classe d'un motif $M$ sur $\Zm(M,t)$.

Si $X$ est une vari\'et\'e projective et lisse, on pose
$
\Zm(X,t)\eqdef \Zm(h(X),t).
$
Si $k$ est de caract\'eristique z\'ero, on a d'apr\`es
le th\'eor\`eme \ref{thm:dbna}
\begin{equation}\label{eq:chivzv=zmchiv}
\chiv \circ \Zv(\,.\,,t)=\Zm(\chiv(\,.\,),t).
\end{equation}
Dans ce cas, il existe un unique morphisme de groupes
\begin{equation}
\Zm\,:\,\kovark
\longto 
1+\kochmk{F}[[t]]^+
\end{equation}
qui envoie la classe d'une vari\'et\'e projective et lisse $X$ sur
$\Zm(X,t)$.

\begin{defi}\label{def:phinv}
Soit $M$ un \'el\'ement de $\kochmk{F}$. On d\'efinit la famille de motifs
virtuels $(\Phi_n(M))_{n\geq 1}$ par la relation 
\begin{equation}\label{eq:defphin}
\sum_{n\geq 1} \Phi_n(M)\,\frac{t^n}{n}=t\,\frac{d\log}{dt} \Zm(M,t). 
\end{equation}
Si $X$ est une $k$-vari\'et\'e projective et lisse, on pose 
$\Phi_n(X)\eqdef \Phi_n(h(X))$.
Si $X$ est un \'el\'ement de $\kovark$, on d\'efinit la 
famille de vari\'et\'es virtuelles  $(\phinvar{X})_{n\geq 1}$
par la relation
\begin{equation}\label{eq:defphinvar}
\sum_{n\geq 1} \phinvar{X}\,\frac{t^n}{n}=t\,\frac{d\log}{dt} \Zv(X,t). 
\end{equation}
\end{defi}
\begin{rem}\label{rem:phinv}
D'apr\`es \eqref{eq:chivzv=zmchiv}, si $k$ est de caract\'eristique z\'ero, on a 
\begin{equation}
\chiv \circ \phinv =\Phi_n \circ \chiv.
\end{equation}
Par ailleurs, si $k$ est un corps fini et $X$ une $k$-vari\'et\'e
quasi-projective, 
le morphisme \og nombre de $k$-points\fg~$\kovark\to\bZ$ 
 envoie  $\Zv(X,t)$ sur la fonction z\^eta de Hasse-Weil classique $\ZHW(X)$.
D'apr\`es \eqref{eq:defphinvar}, le nombre de $k$-points de
$\phinvar{X}$ est donc \'egal au nombre de points de $X$ \`a valeurs dans
$k_n$, o\`u $k_n$ est une extension de degr\'e $n$ de $k$. Une remarque similaire
vaut pour $\Phi_n(X)$ si $X$ est projective et lisse.

Comme on a $\card{X\times Y (k_n)}=\card{X(k_n)}.\card{Y(k_n)}$
pour tout $n$, on peut se demander plus g\'en\'eralement (sur un corps $k$
quelconque) si les morphismes de groupes $\Phi_n$ (respectivement $\phinv$)
ne sont pas en fait des morphismes d'anneaux.

Ceci vaut pour $\Phi_n$. 
Je tiens \`a remercier Evgeny Gorsky
qui m'a indiqu\'e l'argument qui suit\footnote{Dans
\cite{Bou:prod:eul:mot},
nous montrons que  $\Phi_n\circ \chiv$ est un morphisme d'anneaux par
une preuve \og arithm\'etique\fg~utilisant le th\'eor\`eme de Denef et
Loeser \ref{thm:denefloeser}.}.
Dans le langage de la th\'eorie des $\lambda$-anneaux, les $\Phi_n$
(respectivement les $\phinv$) sont les op\'erations de Adams 
associ\'ees \`a la structure oppos\'ee \`a la $\lambda$ structure d\'efinie 
par le morphisme $\Zm(\,.\,,t)$ (respectivement $\Zv(\,.\,t)$).
Par ailleurs, Heinloth montre dans \cite{Hei:func_eq} que la structure oppos\'ee  
\`a la $\lambda$-structure d\'efinie par $\Zm$ est sp\'eciale.
D'apr\`es \cite[Proposition 5.1]{AtiTal}, ceci entra\^\i ne que les $\Phi_n$
sont des morphismes de $\lambda$-anneaux, donc en particulier d'anneaux.

Le m\^eme type d'argument permet de montrer, au moins si le corps de
base est $\bC$, que $\phinv$ ne
peut pas toujours \^etre un morphisme d'anneaux. Ceci est implicitement
contenu dans la remarque du d\'ebut la section 8 de \cite{LaLu:rationality_criteria}.
Indiquons les arguments. Soit $\courbe$ une courbe projective, lisse
et connexe de genre sup\'erieur \`a $1$. 
Les auteurs de \cite{LaLu:rationality_criteria} construisent 
un corps $\cH$ de caract\'eristique z\'ero
et un morphisme d'anneaux $\mu\,:\,\kovarC\to\cH$ tel que  
$\mu(\Zv(\courbe\times \courbe,t))$ n'est pas rationnelle (cf. \cite[Section 3]{LaLu:motivic_birational}).
Supposons alors que l'on ait
\begin{equation}
\forall n\geq 1,\quad \phinvar{\courbe\times \courbe}=\phinvar{\courbe}^2.
\end{equation} 
Comme $\cH$ est sans torsion, ceci entra\^\i ne (cf. \cite[Theorem, p. 49]{Knu:lambdaring}) que le morphisme
\begin{equation}
\mu\circ \Zv^{-1}(\,.\,,-t)\,:\,\kovarC\longto 1+\cH[[t]]^+
\end{equation}
envoie $\courbe\times \courbe$ sur le carr\'e de l'image de $\courbe$. 
Rappelons la structure d'anneau 
mise en jeu sur $1+\cH[[t]]^+$ : la loi de groupe additif sur $1+\cH[[t]]^+$
est induite par la multiplication dans $\cH[[t]]$ et la multiplication 
est alors enti\`erement d\'etermin\'ee par la r\`egle 
\begin{equation}
\forall a,b\in \cH,\quad (1+a\,t)\bullet (1+b\,t)=1+a\,b\,t.
\end{equation}
En particulier si $A$ et $B$ sont deux \'el\'ements de $1+\cH[[t]]^+$
qui sont rationnelles, alors $A\bullet B$ l'est encore.
Or, d'apr\`es un r\'esultat de Kapranov (cf. \cite[proposition
13.3.1.2]{And:mot}), 
$
\Zv(\courbe,t)
$
est rationnelle.
Ainsi 
$\mu(\Zv(\courbe\times \courbe,t))=\mu(\Zv(\courbe,t))\bullet \mu(\Zv(\courbe,t))$ est rationnelle, d'o\`u une contradiction.
\end{rem}

\subsection{Motifs d'Artin}\label{subsec:AM}
On note $\AMk{F}$ la cat\'egorie des motifs d'Artin, i.e. la
sous-cat\'egorie de $\chmk{F}$ engendr\'ee par les motifs des $k$-vari\'et\'es de
dimension z\'ero. Rappelons que le foncteur qui au spectre d'une $k$-alg\`ebre \'etale
$K$ associe le $\galabs{k}$-module discret $F^{\Hom_k(K,\sep{k})}$
induit une \'equivalence de cat\'egories
\begin{equation}\label{eq:cat:artin}
\AMk{F}\isom \gk{F}
\end{equation}
o\`u $\gk{F}$
est la cat\'egories des $\galabs{k}$-repr\'esentations discr\`etes \`a valeurs dans des
$F$-espaces vectoriels de dimension finie. On a donc un isomorphisme d'anneaux
canonique $\koAMk{F}\isom \kogk{F}$ au moyen duquel
nous identifierons d\'esormais ces deux anneaux de Grothendieck.

\subsection{Formule de MacDonald motivique}

Soit $F$ un anneau, $K$ un corps contenant $F$,
$\GrVect_K$ la cat\'egorie des $K$-espaces vectoriels gradu\'es
de dimension finie
et 
$
H\,:\,\chmk{F}\longto \GrVect_{K}
$
 une r\'ealisation cohomologique de Weil
(avec \'eventuellement des structures suppl\'ementaires sur 
les objets $\GrVect_{K}$ par exemple l'action du groupe de Galois absolu
dans le cas de la r\'ealisation $\ell$-adique) (cf. \cite[\S 4.2.5 et 7.1.1]{And:mot}).
L'application
$
\Poinc_H\,:\,\kochmk{F}\longto K_0(\Vect_K)[u,u^{-1}]
$
qui \`a $M$ associe $\sumu{i\in \bZ} \symb{H^i(M)}\,u^i$
est alors un morphisme d'anneaux,
que l'on appelle  polyn\^ome de Poincar\'e virtuel (associ\'e \`a la
r\'ealisation cohomologique $H$).

Dans la suite, on ne consid\'erera que des r\'ealisations cohomologiques
\termin{classiques},
au sens de \cite[\S 3.4]{And:mot}. 
Pour $i\in \bZ$, on notera $b_i(M)$
le $i$-\`eme nombre de Betti de $M$, i.e. la dimension du $K$-espace
vectoriel $H^i(M)$ (qui ne d\'epend pas du choix de la cohomologie
classique $H$ d'apr\`es \cite[Th\'eor\`eme 4.2.5.2]{And:mot}).

Le r\'esultat suivant, d\^u \`a del Ba\~no, g\'en\'eralise la formule de
MacDonald calculant les nombres de Betti d'un produit sym\'etrique (\cite{McDon}).
\begin{thm}[del Ba\~no]\label{thm:macdonald:motivique}
Pour tout objet $M$ de $\chmk{F}$, on a 
\begin{equation}
\Poinc_H(\Zm(M,t))
=
\frac
{\produ{i\in \bZ,\,i\text{ impair}}\,\,\,\sumu{n\geq 0} \symb{\wedgeo{n} H^i(M)}\,u^{\,i\,n}\,t^{n}}
{\produ{i\in \bZ,\,i\text{ pair}}\,\,\,\sumu{n\geq 0} \symb{\wedgeo{n} H^i(M)}\,(-1)^n\,u^{\,i\,n}\,t^{n}}
\end{equation}
\end{thm}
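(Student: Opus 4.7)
The plan is to apply $\Poinc_H$ termwise to the expansion $\Zm(M,t)=\sum_{n\geq 0}\symb{\Sym^n M}\,t^n$ and to exploit the interaction of a classical Weil cohomology with symmetric powers. Concretely, since $H$ is a tensor functor from $\chmk{F}$ to the category of \emph{graded} $K$-vector spaces, the Künneth isomorphism plus the Koszul sign convention imply that $H$ transforms the symmetrizing idempotent $\frac{1}{n!}\sum_{\sigma\in\sym_n}\sigma$ of $M^{\otimes n}$ into the \emph{super}-symmetrizing idempotent of $H^\ast(M)^{\otimes n}$. Writing $H^\ast(M)=\bigoplus_{i\in\bZ}H^i(M)$ and decomposing the super-symmetric power according to degree, one obtains a canonical graded isomorphism
\begin{equation}
H^\ast(\Sym^n M)\;\cong\; \bigoplus_{\substack{(n_i)_{i\in\bZ}\\ \sum_i n_i=n}}\;\bigotimes_{i\text{ pair}}\Sym^{n_i}H^i(M)\;\otimes\;\bigotimes_{i\text{ impair}}\wedgeo{n_i} H^i(M),
\end{equation}
the factor indexed by $(n_i)$ sitting in cohomological degree $\sum_i i\,n_i$.

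From there the proof is essentially bookkeeping. Taking classes in $K_0(\Vect_K)$ and weighting by $u^{\sum_i i\, n_i}\,t^{\sum_i n_i}$, one rewrites
\begin{equation}
\Poinc_H\!\left(\symb{\Sym^n M}\right)=\sum_{\sum_i n_i=n}\;\prod_{i\text{ pair}}\symb{\Sym^{n_i}H^i(M)}\,u^{i\,n_i}\;\prod_{i\text{ impair}}\symb{\wedgeo{n_i}H^i(M)}\,u^{i\,n_i},
\end{equation}
so that summation over $n$ factorizes as
\begin{equation}
\Poinc_H(\Zm(M,t))=\prod_{i\text{ pair}}\left(\sumu{n\geq 0}\symb{\Sym^n H^i(M)}\,u^{i\,n}\,t^n\right)\,\prod_{i\text{ impair}}\left(\sumu{n\geq 0}\symb{\wedgeo{n}H^i(M)}\,u^{i\,n}\,t^n\right).
\end{equation}
The numerator already has the desired shape. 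To turn each even factor into the denominator of the theorem, I would apply the identity \eqref{eq:symnaltn} in the pseudo-abelian tensor category $\Vect_K$: for any finite-dimensional $V$ and any formal variable $s$,
\begin{equation}
\sumu{n\geq 0}\symb{\Sym^n V}\,s^n\;=\;\frac{1}{\sumu{n\geq 0}\symb{\wedgeo{n} V}\,(-1)^n\,s^n},
\end{equation}
applied with $V=H^i(M)$ and $s=u^i\,t$ (note that $H^i(M)$ is finite-dimensional by the axioms of a classical Weil cohomology, which guarantees validity of \eqref{eq:symnaltn}). Inverting the even factors and moving them to the denominator yields exactly the formula claimed.

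The one point requiring genuine care is the first step: that a Weil cohomology functor intertwines the categorical symmetric power in $\chmk{F}$ with the super-symmetric power in $\GrVect_K$. This is implicit in the compatibility of $H$ with tensor products and in the Koszul sign rule built into the symmetric monoidal structure on $\GrVect_K$; one has to verify that the image under $H$ of the idempotent $p_n=\tfrac{1}{n!}\sum_{\sigma}\sigma$ acting on $M^{\otimes n}$ coincides with the projector on the super-symmetric subobject of $H^\ast(M)^{\otimes n}$. Once this functoriality with signs is granted, the remainder of the argument is formal manipulation of generating series.
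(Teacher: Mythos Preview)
Your argument is correct and is essentially the same as the paper's: the paper simply cites del Ba\~no's proposition 3.8 in \cite{dBa:moduli} (which is precisely the computation of $H^\ast(\Sym^n M)$ as a super-symmetric power that you spell out) and then applies the identity \eqref{eq:symnaltn} to convert the even-degree symmetric-power factors into inverses of alternating-power series. You have reproduced del Ba\~no's step rather than quoted it, but the strategy and the key inputs coincide.
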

\begin{proof}
Ceci d\'ecoule
de la proposition 3.8 de \cite{dBa:moduli},
compte tenu de la formule \eqref{eq:symnaltn}.
\end{proof}
\begin{cor}\label{cor:delbano}
Supposons que $k$ soit un corps global. Soit $X$ une $k$-vari\'et\'e,
suppos\'ee en outre projective et lisse si $k$ est de caract\'eristique
non nulle.
Pour presque tout $\mfp$, 
on a  $\Trp(\chil(\Zm(X,t)))=\ZHW(X_{\mfp},t)$, o\`u $\ZHW$ est la fonction
z\^eta de Hasse-Weil classique de la $\kappa_{\mfp}$-vari\'et\'e $X_{\mfp}$.
\end{cor}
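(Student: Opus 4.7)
La stratégie est d'appliquer la formule de MacDonald motivique (théorème \ref{thm:macdonald:motivique}) à la réalisation cohomologique $\ell$-adique, puis de spécialiser le polynôme de Poincaré virtuel en $u=-1$ afin de retrouver $\chil$, et enfin d'appliquer $\Trp$ coefficient par coefficient. On peut d'abord se ramener au cas où $X$ est projective et lisse : c'est l'hypothèse faite en caractéristique non nulle, et en caractéristique zéro la présentation de Bittner permet d'écrire $\symb{X}$ comme combinaison $\bZ$-linéaire de classes de variétés projectives et lisses dans $\kovark$ ; comme $\Zm$, $\chil$, $\Trp$ et $Y\mapsto \ZHW(Y_\mfp,t)$ sont tous des morphismes du groupe additif sous-jacent vers un groupe multiplicatif de séries (pour le dernier, il s'agit de l'additivité du comptage de $\kappa_\mfp$-points pour une stratification), cela suffit.

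Supposons donc $X$ projective et lisse. On applique le théorème \ref{thm:macdonald:motivique} à la réalisation $\ell$-adique $H$ (qui est classique au sens de \cite[\S 3.4]{And:mot}), puis on spécialise à $u=-1$ : comme morphismes $\kochmk{\bQ}\to\kogkql$, on a $\chil = \Poincl|_{u=-1}$. Un calcul de signes élémentaire donne alors
\[
\chil(\Zm(X,t))
=
\frac{\produ{i\text{ impair}}\sumu{n\geq 0}(-1)^n\symb{\wedgeo{n}H^i(\sep{X},\Ql)}\,t^n}{\produ{i\text{ pair}}\sumu{n\geq 0}(-1)^n\symb{\wedgeo{n}H^i(\sep{X},\Ql)}\,t^n}.
\]

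Pour presque toute place finie $\mfp$, $X$ admet un modèle projectif et lisse sur $\ecO_\mfp$ avec $\ell$ inversible dans $\kappa_\mfp$. Le théorème de changement de base lisse et propre en cohomologie étale entraîne alors que $I_\mfp$ agit trivialement sur $H^i(\sep{X},\Ql)$, et qu'il y a un isomorphisme $\Frp$-équivariant entre $H^i(\sep{X},\Ql)^{I_\mfp}=H^i(\sep{X},\Ql)$ et $H^i(\sep{X_\mfp},\Ql)$. L'identité classique $\sum_{n\geq 0}(-1)^n\Tr(\varphi\,|\,\wedgeo{n}V)\,t^n = \det(1-\varphi\,t\,|\,V)$ permet d'appliquer $\Trp$ terme à terme et conduit à
\[
\Trp(\chil(\Zm(X,t))) = \produ{i}\det(1-\Frp\,t\,|\,H^i(\sep{X_\mfp},\Ql))^{(-1)^{i+1}} = \ZHW(X_\mfp,t),
\]
la dernière égalité résultant de la formule des traces de Grothendieck-Lefschetz. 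Le principal obstacle est l'étape de bonne réduction : l'identification $\Frp$-équivariante, par changement de base étale, entre la cohomologie $\ell$-adique du point générique et celle des fibres spéciales, valable pour presque toute $\mfp$ ; tout le reste est un enchaînement d'identités formelles et de spécialisations.
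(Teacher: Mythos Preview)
The proposal is correct and follows exactly the approach the paper implicitly intends: the corollary is stated without proof right after the motivic MacDonald formula (théorème~\ref{thm:macdonald:motivique}), and your argument is precisely the natural derivation --- apply the formula to the $\ell$-adic realisation, specialise $u=-1$ to recover $\chil$, use smooth proper base change to identify the cohomology of $X$ and $X_{\mfp}$ with their Frobenius actions for almost all $\mfp$, and conclude via the determinant identity and the Grothendieck--Lefschetz trace formula. The reduction to the projective smooth case via Bittner in characteristic zero is a sensible way to make the argument uniform.
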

Le th\'eor\`eme \ref{thm:macdonald:motivique} 
va nous permettre de donner une formule 
pour $\Poinc_H(\Phi_d(M))$, qui nous
sera utile pour montrer la convergence du volume de Tamagawa motivique
(cf. th\'eor\`eme \ref{thm:princ}).
\begin{nota}
Pour $n\geq 1$, 
soit $(P_{n,m})_{m\geq 1}$ la famille d'\'el\'ements de
$\bZ[T_1,\dots,T_n]$
d\'efinie par la relation
\begin{equation}\label{eq:rem:prop:pnm}
t\,\frac{d\log}{dt}\left(1+\sum_{1\leq i \leq n} T_i\,t^i\right)=\sum_{m\geq 1} P_{n,m}(T_1,\dots,T_n)\,t^m.
\end{equation}
\end{nota}
\begin{rem}\label{rem:trfpdimvm:trfn}
Si $V$ est un $K$-espace vectoriel de dimension finie 
et $f\in \End(V)$
on a donc l'\'egalit\'e
\begin{equation}
\Tr(f|P_{\dim(V),m}(
\wedgeo{j} V
)_{j=1,\dots,\dim(V)})
=
\Tr(f^n|V)
\end{equation}
\end{rem}
Des  relations \eqref{eq:defphin} et \eqref{eq:rem:prop:pnm}
et du th\'eor\`eme \ref{thm:macdonald:motivique}
on d\'eduit aussit\^ot la proposition suivante.
\begin{prop}\label{prop:poinchPhi_d:mot}\label{prop:poinchPhi_d}
Soit $n\geq 1$. Pour tout objet $M$ de $\chmk{F}$, on a 
\begin{equation}
\Poinc_H(\Phi_n(M))
=\sum_{i\in \bZ}
P_{b_i(M),n}\left(\symb{\wedgeo{j} H^i(M)}\right)_{1\leq j\leq b_i(M)} (-1)^{(n+1)\,i}\,u^{n\,i}
\end{equation}
En particulier, pour toute $k$-vari\'et\'e projective et lisse $X$ 
on a
\begin{equation}\label{eq:poinch:phin}
\Poinc_H(\Phi_n(X))
=\sum_{i=0}^{2\,\dim(X)} 
P_{b_i(X),n}\left(\left[\wedgeo{j} H^i(X)\right]\right)_{1\leq j\leq b_i(X)} (-1)^{(n+1)\,i}\,u^{\,n\,i}
\end{equation}
\end{prop}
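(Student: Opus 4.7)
The plan is to apply the Poincar\'e morphism $\Poinc_H$ directly to the defining identity \eqref{eq:defphin} for $\Phi_n(M)$ and to rewrite the right-hand side using Theorem \ref{thm:macdonald:motivique}. Since $\Poinc_H$ is a ring homomorphism, it commutes with formal logarithmic differentiation and with formation of finite formal products, so \eqref{eq:defphin} immediately gives
$$\sum_{n\geq 1} \Poinc_H(\Phi_n(M))\,\frac{t^n}{n} \;=\; t\,\frac{d\log}{dt}\,\Poinc_H(\Zm(M,t)).$$

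Theorem \ref{thm:macdonald:motivique} now expresses the right-hand factor as a quotient of finite products $\prod_{i} F_i(t)^{\varepsilon_i}$, with $\varepsilon_i=+1$ when $i$ is impair and $\varepsilon_i=-1$ when $i$ is pair, where each $F_i(t)$ is a polynomial of the shape $1+\sum_{j=1}^{b_i(M)} c_{i,j}\,t^j$, with $c_{i,j}=\symb{\wedgeo{j}H^i(M)}\,u^{ij}$ for impair $i$ and $c_{i,j}=(-1)^j\symb{\wedgeo{j}H^i(M)}\,u^{ij}$ for pair $i$. Converting $\log$ of a product into a sum of logs reduces matters to computing $t\,\frac{d\log}{dt}F_i(t)$ for each $i$; by the very definition \eqref{eq:rem:prop:pnm} of $P_{n,m}$ this equals
$$t\,\frac{d\log}{dt}\,F_i(t) \;=\; \sum_{m\geq 1} P_{b_i(M),m}(c_{i,1},\dots,c_{i,b_i(M)})\,t^m.$$

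The remaining ingredient is the weighted homogeneity of $P_{n,m}$: assigning weight $j$ to $T_j$, each monomial of $P_{n,m}$ has total weight $m$, since substituting $T_j\mapsto \lambda^j T_j$ in \eqref{eq:rem:prop:pnm} is equivalent to substituting $t\mapsto \lambda t$. Applied with $\lambda=u^i$ (resp.\ $-u^i$) in the impair (resp.\ pair) case, this pulls out a factor $u^{im}$ (resp.\ $(-1)^m u^{im}$) and leaves precisely $P_{b_i(M),m}(\symb{\wedgeo{j}H^i(M)})_{j}$.

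Identifying coefficients in $t$ on both sides of the logarithmic derivative identity, and combining the signs carried by $\varepsilon_i$ with the extra $(-1)^m$'s appearing for pair $i$, produces the announced formula. The only slightly technical step is this final parity bookkeeping: one must check that for every $i\in\bZ$ the total sign collapses to $(-1)^{(n+1)i}$. The statement for a smooth projective $k$-variety $X$ then follows at once by specializing to $M=h(X)$ and using that $H^i(X)=0$ outside the range $[0,2\dim X]$.
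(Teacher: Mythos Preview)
Your proof is correct and follows essentially the same approach as the paper's own proof, which is the one-line remark that the proposition is immediate from \eqref{eq:defphin}, \eqref{eq:rem:prop:pnm}, and Theorem~\ref{thm:macdonald:motivique}. You have simply unpacked that remark: applying $\Poinc_H$ to \eqref{eq:defphin}, factoring the logarithmic derivative via del~Ba\~no's formula, invoking \eqref{eq:rem:prop:pnm} term by term, and then using the weighted homogeneity of $P_{n,m}$ (the one detail the paper leaves implicit) to extract the powers of $u$ and the signs.
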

\begin{rem}
Si $k$ est un corps fini et $H$ est la r\'ealisation
$\ell$-adique,
en prenant la trace du Frobenius et en faisant $u=-1$ dans la relation 
\eqref{eq:poinch:phin}, on obtient, d'apr\`es la remarque
\ref{rem:trfpdimvm:trfn}, la formule liant le nombre de points de $X$
\`a valeurs dans une extension de degr\'e $n$ de $k$ et la somme altern\'ee
des traces de la puissance $n$-\`eme du Frobenius agissant sur les groupes de
cohomologie $\ell$-adique.
La formule \eqref{eq:poinch:phin} peut donc \^etre vue comme une
g\'en\'eralisation de cette formule de trace.
\end{rem}

\section{La fonction $L$ d'Artin motivique de Dhillon et Minac}\label{sec:Lmot:DM}

\subsection{Une remarque sur les actions de groupes sur les motifs}

Afin de pr\'eciser les r\'esultats de rationalit\'e de \cite{DhMi:motivic_chebotarev}, nous aurons
besoin de la proposition \ref{prop:act:g} ci-dessous, qui est certainement bien connue
des sp\'ecialistes, mais pour laquelle nous n'avons pas trouv\'e de r\'ef\'erence.
Soit $M$ un objet d'une cat\'egorie pseudo-ab\'elienne, $G$ un groupe
agissant sur $M$ et $p$ un idempotent de $M$.
On dit que l'action de
$G$ est \termin{compatible \`a $p$} 
si la relation $p\,g\,p\,h\,p=p\,g\,h\,p$
vaut pour tous $g,h$ de $G$. Dans ce cas l'action de $G$ sur $M$
induit naturellement une action de $G$ sur $\im(p)$, donn\'ee par le
morphisme $g\mapsto p\,g\,p$.
Les deux lemmes ci-dessous sont \'el\'ementaires.
\begin{lemme}\label{lm:act:g:pseudo}
Soit $M$ et $N$ des objets d'une cat\'egorie pseudo-ab\'elienne, $M$
\'etant muni de l'action d'un  groupe $G$.
\begin{enumerate}
\item
On suppose qu'il existe $i\in \Hom(N,M)$ et $r\in \Hom(M,N)$ tels que
$r\,i=\Id_N$. Soit $N'$ le facteur direct de $M$ d\'efinit par la
r\'etraction $r$, i.e. l'image du projecteur $i\,r$.
On suppose que l'application $\psi\,:\,G\to \End(N)$ qui \`a $g$ associe
$r\,g\,i$ v\'erifie $\psi(g\,h)=\psi(g)\,\psi(h)$, ce qui induit une
action de $G$ sur $N$. Alors l'action de $G$ sur $M$ est compatible \`a $i\,r$ et 
l'isomorphisme naturel $i\,:\,N\isom N'$ est $G$-\'equivariant. 
\item
On suppose qu'il existe $p\in \Hom(M,N)$ et $s\in \Hom(N,M)$ tels que
$p\,s=\Id_N$. Soit $N'$ le facteur direct de $M$ d\'efinit par la
section $s$, i.e. l'image du projecteur $s\,p$.
On suppose que l'application $\psi\,:\,G\to \End(N)$ qui \`a $g$ associe
$p\,g\,s$ v\'erifie $\psi(g\,h)=\psi(g)\,\psi(h)$, ce qui induit une
action de $G$ sur $N$.
Alors l'action de $G$ sur $M$ est compatible \`a $s\,p$ et 
l'isomorphisme naturel $s\,:\,N\isom N'$ est $G$-\'equivariant. 
\end{enumerate}
\end{lemme}
\begin{lemme}\label{lm:MG}
Soit $M$ un objet d'une cat\'egorie tensorielle pseudo-ab\'elienne
$F$-lin\'eaire, o\`u $F$ est une $\bQ$-alg\`ebre. Soit $G$ un groupe fini
agissant sur $M$. Soit $p_1,\dots,p_r$ un syst\`eme complet d'idempotents othogonaux de
$M$. Pour tout $i$, on suppose que l'action de $G$ est
compatible \`a $p_i$. On a alors un isomorphisme canonique
\begin{equation}
M^G\isom \oplusu{1\leq i\leq r} \im(p_i)^G
\end{equation}
\end{lemme}

\begin{notas}\label{notas:px}
Soit $X$ une $k$-vari\'et\'e projective, lisse et int\`egre de dimension
$d$. Le \termin{corps des constantes} de $X$ est $k'\eqdef
H^0(X,\ecO_X)$. 
Soit $\pi\,:\,X\to \Spec(k')$ le morphisme naturel.
Soit $i\,:\,k'\to k''$ une extension finie s\'eparable de degr\'e $n$ telle que $X(k'')$
soit non vide, et  $x\,:\Spec(k'')\to X$. On a alors (\cite[\S 1.11]{Sch:mot})
$
i_{\ast}x^{\ast}\pi^{\ast}=n.
$
et
$\pi_{\ast}x_{\ast}i^{\ast}=n$.
Ainsi $\frac{1}{n}i_{\ast}x^{\ast}$
est une r\'etraction de $\pi^{\ast}\,:\,h(\Spec(k'))\to h(X)$, et induit donc un isomorphisme
$\iota_x$ de $h(\Spec(k'))$ sur un facteur direct de $X$, \`a savoir
l'image du projecteur 
$
p_x\eqdef \frac{1}{n}\pi^{\ast}i_{\ast}x^{\ast}
$
qui sera not\'ee $h^0(X)$.
De m\^eme $\frac{1}{n}x_{\ast}i^{\ast}(-d)$ est une section 
de $\pi_{\ast}\,:\,h(X)\to h(\Spec(k'))(-d)$ et induit donc un isomorphisme
$\iota'_x$ de
$h(\Spec(k'))(-d)$ sur un facteur direct de $X$, \`a savoir l'image du projecteur
$
p'_x\eqdef \frac{1}{n}x^{\ast}i^{\ast}(-d)\pi_{\ast},
$
qui sera not\'ee $h^{2\,d}(X)$.
\end{notas}

Supposons \`a pr\'esent qu'un groupe  $G$ agisse sur $X$ par
$k$-automorphismes. Cette action induit par composition \`a gauche une
action de $G$ sur $\Hom(X,\Spec(k'))=\Aut_k(\Spec(k'))$, i.e.
un morphisme de groupe $\psi\,:\,G\to \Aut_k(\Spec(k'))$ d'o\`u
par fonctorialit\'e une action de $\Gop$ sur $h(\Spec(k'))$ et $h(\Spec(k'))(-d)$.

\begin{prop}\label{prop:act:g}
L'action de $\Gop$ sur $h(X)$ d\'eduite par fonctorialit\'e de l'action
de $G$ sur $X$ 
est compatible aux projecteurs $p_x$ et $p'_x$.
L'action induite de $\Gop$ sur $h^0(X)$
(respectivement $h^{2\,d}(X)$) 
co\"\i ncide modulo identification naturelle avec l'action de $\Gop$ sur $h(\Spec(k'))$ 
(respectivment $h(\Spec(k'))(-d)$) induite par $\psi$.

En particulier, si $X$ est
g\'eom\'etriquement int\`egre l'action induite de $\Gop$ sur $h^0(X)$ et
$h^{2\,d}(X)$ est triviale. Si $X$ n'est pas g\'eom\'etriquement int\`egre, l'action de $\Gop$ sur
$h^0(X)$ et $h^{2\,d}(X)$ n'est pas
n\'ecessairement triviale.
\end{prop}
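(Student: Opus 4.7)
Mon plan est d'appliquer les deux parties du lemme \ref{lm:act:g:pseudo} aux projecteurs $p_x$ et $p'_x$ introduits dans \ref{notas:px}.

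Pour $p_x$, je pose dans le lemme \ref{lm:act:g:pseudo}(1) $M = h(X)$, $N = h(\Spec k')$, $i = \pi^*$ et $r = \frac{1}{n} i_* x^*$; les relations $r\,i = \Id_N$ et $i\,r = p_x$ r\'esultent directement des observations de \ref{notas:px}. L'action de $\Gop$ sur $h(X)$ est donn\'ee par $g \mapsto g^*$. L'identit\'e fondamentale est que l'action de $g$ sur $k' = H^0(X,\ecO_X)$ fournit un automorphisme $\psi(g) \in \Aut_k(\Spec k')$ tel que $\pi \circ g = \psi(g) \circ \pi$. Combin\'ee \`a $\pi \circ x = i$ et \`a $i_* i^* = n \cdot \Id$, on obtient
\begin{equation*}
r\,g^*\,i \;=\; \tfrac{1}{n} i_* (\pi \circ g \circ x)^* \;=\; \tfrac{1}{n} i_* (\psi(g) \circ i)^* \;=\; \tfrac{1}{n} i_* i^* \psi(g)^* \;=\; \psi(g)^*.
\end{equation*}
L'application $g \mapsto \psi(g)^*$ est une action de $\Gop$ sur $h(\Spec k')$ par contravariance, qui co\"\i ncide avec celle d\'eduite de $\psi$. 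Le lemme \ref{lm:act:g:pseudo}(1), appliqu\'e avec $\Gop$ en lieu et place de $G$, fournit alors la compatibilit\'e de l'action \`a $p_x$ ainsi que la $\Gop$-\'equivariance de $\iota_x$.

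Pour $p'_x$, j'applique la partie (2) du lemme avec $p = \pi_*$ et $s = \frac{1}{n} x_* i^*(-d)$. Le calcul parall\`ele utilise la relation duale $\pi_* \circ g^* = \psi(g)^* \circ \pi_*$ (cons\'equence de $\pi g = \psi(g) \pi$ par covariance, sachant que $g^* = (g^{-1})_*$ pour $g$ automorphisme de $X$), ainsi que $\pi_* \circ x_* = i_*$; il aboutit de m\^eme \`a $p\,g^*\,s = \psi(g)^*$, ici vu comme endomorphisme de $h(\Spec k')(-d)$. On conclut comme pr\'ec\'edemment, \`a la torsion de Tate pr\`es.

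Les deux derni\`eres assertions sont imm\'ediates. Si $X$ est g\'eom\'etriquement int\`egre alors $k' = k$, donc $\Aut_k(\Spec k')$ est trivial et l'action induite sur $h^0(X)$ et $h^{2d}(X)$ aussi. Pour un contre-exemple dans le cas non g\'eom\'etriquement int\`egre, on peut prendre $X = \Spec(k')$ avec $k'/k$ finie galoisienne et $G = \Gal(k'/k)$ agissant naturellement: $h^0(X) = h(X) = h(\Spec k')$ h\'erite alors de l'action de Galois, qui est non triviale. L'obstacle principal est essentiellement notationnel: il s'agit de g\'erer avec soin les conventions (action de $G$ versus de $\Gop$, pushforward versus pullback pour les morphismes finis, torsions de Tate); les calculs proprement dits se r\'eduisent \`a l'application formelle du lemme \ref{lm:act:g:pseudo} et \`a l'identit\'e $\pi \circ g = \psi(g) \circ \pi$.
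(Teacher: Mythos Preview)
Your proof is correct and follows essentially the same approach as the paper: reduce to the two parts of Lemma~\ref{lm:act:g:pseudo} by verifying $r\,g^*\,i=\psi(g)^*$ and $p\,g^*\,s=\psi(g)^*$, both via the identity $\pi\circ g=\psi(g)\circ\pi$ together with $g^*=(g^{-1})_*$ for the covariant side. Your counterexample $X=\Spec(k')$ is a simpler zero-dimensional instance of the paper's $X\times_k\Spec(k')$; both work.
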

\begin{proof}
Compte tenu du lemme \ref{lm:act:g:pseudo}, 
il suffit de montrer pour tout $g\in G$ les relations
\begin{equation}\label{eq:rel:1}
\frac{1}{n} i_{\ast}\,x^{\ast}\,g^{\ast}\,\pi^{\ast}=\psi(g)^{\ast}
\end{equation}
et
\begin{equation}\label{eq:rel:2}
\frac{1}{n} \pi_{\ast}\,g^{\ast}\,x_{\ast}\,i^{\ast}=\psi(g)^{\ast}.
\end{equation}
La relation \eqref{eq:rel:1} est imm\'ediate compte tenu des \'egalit\'es
$\pi\,g=\psi(g)\,\pi$ et $i_{\ast}\,x^{\ast}\,\pi^{\ast}=n$.
Pour montrer la relation \eqref{eq:rel:2}, on utilise les \'egalit\'es 
$g_{\ast}\,g^{\ast}=\Id_{h(X)}$ et $\psi(g)_{\ast}\,\psi(g)^{\ast}=\Id_{h(\Spec(k'))}$
(\cite[\S 1.10]{Sch:mot}) d'o\`u $g^{\ast}=(g^{-1})_{\ast}$ et
$\psi(g^{-1})_{\ast}=\psi(g)^{\ast}$.
Compte tenu de $\pi\,g^{-1}=\psi(g^{-1})\,\pi$,
il s'ensuit
\begin{equation}
\frac{1}{n} \pi_{\ast}\,g^{\ast}\,x_{\ast}\,i^{\ast}=
\frac{1}{n} \psi(g^{-1})_{\ast}\,\pi_{\ast}\,x_{\ast}\,i^{\ast}=
\psi(g)^{\ast}.
\end{equation}

Si $X$ est g\'eom\'etriquement int\`egre, on a $k'=k$ et $\psi$ est  trivial,
d'o\`u la seconde assertion. Pour montrer la derni\`ere assertion, il suffit de consid\'erer la vari\'et\'e
$X\times_{\Spec(k)} \Spec(k')$, o\`u $X$ est projective, lisse et int\`egre, $k'/k$ est
une extension finie telle que $\Aut_k(k')$ est non trivial et
$G=\Aut_k(k')^{\text{op}}$ agit sur $X\times_{\Spec(k)} \Spec(k')$ via l'action naturelle sur le
deuxi\`eme facteur.
\end{proof}

\subsection{D\'efinition et propri\'et\'es de la fonction $L$ motivique}

\begin{nota}\label{nota:repg}
Si $G$ est un groupe fini et $F$ un corps, on appellera
\termin{$F$-repr\'esentation de $G$} toute repr\'esentation lin\'eaire de dimension
finie de $G$ d\'efinie sur $F$. Si $\rho$ est une $F$-repr\'esentation on
note $V_{\rho}$ son espace de repr\'esentation et $\chi_{\rho}$ son caract\`ere.
\end{nota}

Soit $G$ un groupe fini et $\rho$ une $F$-repr\'esentation de $G$.
Les auteurs de \cite{DhMi:motivic_chebotarev} associent alors \`a 
tout objet $M$ de $\chmk{F}$
muni d'une action de $G$ 
une \termin{fonction $L$ d'Artin motivique} 
\begin{equation}
\LmDM(M,G,\rho,t)\eqdef \Zm((M\otimes V_{\rho})^G,t)\,\in\,1+\kochmk{F}[[t]]^+
\end{equation}
et \`a toute  $k$-$G$-vari\'et\'e projective et lisse $X$ la fonction 
\begin{equation}
\LmDM(X,G,\rho,t)\eqdef\LmDM(h(X),G^{\text{\text{op}}},\rho^{\text{\text{op}}},t)
\end{equation}
o\`u $\rhop$ est la repr\'esentation oppos\'ee de $\rho$.
Notons que pour $d\in \bZ$, on a un isomorphisme $(M(-d) \otimes
V)^G\isom (M\otimes V)^G(-d)$, d'o\`u, d'apr\`es \eqref{eq:zmd},
\begin{equation}\label{eq:lmdmd}
\LmDM(M(-d),G,\rho,t)=\LmDM(M,G,\rho,\bL^d\,t).
\end{equation}
Si $k$ est de caract\'eristique z\'ero, 
la proposition 2.7 de \cite{DhMi:motivic_chebotarev} et le lemme
7.1 de \cite{Bit:univ_eul_car}
montrent 
qu'il existe un unique morphisme de groupes
\begin{equation}
\LmDM(\,.\,,G,\rho,t)\,:\,\kogvark\to 1+\kochmk{F}[[t]]^+
\end{equation}
qui envoie la classe d'une $G$-vari\'et\'e projective et lisse $X$ sur $\LmDM(X,G,\rho,t)$.
\begin{rem}
Les auteurs de \cite{DhMi:motivic_chebotarev} supposent dans tout leur
article que le corps $F$ des coefficients des motifs contient toutes
les racines de l'unit\'e, 
mais cette hypoth\`ese est inutile pour la d\'efinition de $\LmDM$ et les r\'esultats de
\cite{DhMi:motivic_chebotarev} utilis\'es dans la pr\'esente section.
Ils n'utilisent cette hypoth\`ese qu'\`a partir
de la section 5 de leur article.
\end{rem}
\begin{rem}\label{rem:compat:scal}
D'apr\`es \cite[\S 4.2.2.]{And:mot}, si $E\to F$ est une extension,
il existe un morphisme d'anneaux naturel $\kochmk{E}\to \kochmk{F}$
et la formation  de $\LmDM$ est compatible \`a ce changement de
coefficients, i.e. si $\rho$ est une $F$-repr\'esentation, 
l'image de $\LmDM(X,G,\rho,t)$ par ce morphisme co\"\i ncide avec
$\LmDM(X,G,\rho\otimes_E F,t)$.
\end{rem}
\begin{lemme}\label{lm:lmdmtriv}
Si $k$ est de caract\'eristique z\'ero et si $\rho=\triv$ est la
repr\'esentation triviale, on a pour toute $G$-$k$-vari\'et\'e quasi-projective $X$
\begin{equation}
\LmDM(X,G,\triv,t)=\Zm(X/G,t).
\end{equation}
\end{lemme}
\begin{proof}
Il suffit de le montrer pour $X$ projective et lisse. 
On a alors, par d\'efinition,
\begin{equation}
\LmDM(X,G,\triv,t)=\Zm(h(X)^G,t)
\end{equation}
D'apr\`es le th\'eor\`eme \ref{thm:dbna}, on a $\Zm(h(X)^G,t)=\Zm(X/G,t)$.
\end{proof}
\begin{prop}\label{prop:LmDMrat}
Si $M$ est pair (respectivement impair), $(M\otimes
V)^G$ est pair (respectivement impair).
Si $M$ est de dimension finie, $(M\otimes V)^G$ est de dimension
finie~; 
en particulier $\LmDM(M,G,\rho,t)$ est rationnelle.
\end{prop}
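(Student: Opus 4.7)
The plan is to reduce the statement to stability properties of parity and of finite-dimensionality under direct sums and direct factors, and then to apply Proposition \ref{prop:zm:dimf} to obtain rationality.

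First I would observe that, forgetting the $G$-action, the object $M\otimes V_{\rho}$ is canonically isomorphic in $\chmk{F}$ to a direct sum of $\dim V_{\rho}$ copies of $M$, since $V_{\rho}$ is a finite-dimensional $F$-vector space and $\chmk{F}$ is $F$-linear. Moreover, $(M\otimes V_{\rho})^G$ is by definition the image of the projector $\frac{1}{\card{G}}\sum_{g\in G} g\otimes \rho(g)$, and is therefore a direct factor of $M\otimes V_{\rho}$.

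Next, using the standard binomial formulas
\begin{equation*}
\Alt^n(A\oplus B)=\bigoplus_{i+j=n} \Alt^i A\otimes \Alt^j B,\qquad
\Sym^n(A\oplus B)=\bigoplus_{i+j=n} \Sym^i A\otimes \Sym^j B,
\end{equation*}
one sees immediately that a direct sum of pair (resp.\ impair) objects is pair (resp.\ impair), and that a direct factor of a pair (resp.\ impair) object is pair (resp.\ impair). Combining these facts with the preceding observation gives the first claim: if $M$ is pair (resp.\ impair), then so is $M^{\oplus \dim V_{\rho}}$, and hence so is its direct factor $(M\otimes V_{\rho})^G$.

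For the case where $M$ is of finite dimension, stability under direct sums is equally easy, but stability under direct factors is the main nontrivial point: it rests on the theorem of Kimura that a direct factor of a finite-dimensional object is finite-dimensional (cf.\ \cite{And:bki}). Granting this, $(M\otimes V_{\rho})^G$ is a direct factor of the finite-dimensional object $M^{\oplus \dim V_{\rho}}$, hence itself of finite dimension, and the rationality of $\LmDM(M,G,\rho,t)=\Zm((M\otimes V_{\rho})^G,t)$ then follows from Proposition \ref{prop:zm:dimf}.
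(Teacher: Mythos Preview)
Your argument is correct and follows the same skeleton as the paper's: show that $M\otimes V_\rho$ inherits the parity of $M$, observe that $(M\otimes V_\rho)^G$ is a direct factor, and for the finite-dimensional case invoke Kimura's theorem that a direct factor of a finite-dimensional object is again finite-dimensional, concluding rationality via Proposition~\ref{prop:zm:dimf}.

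The one substantive difference lies in how you establish that $M\otimes V_\rho$ is pair (resp.\ impair) when $M$ is. You identify $M\otimes V_\rho\cong M^{\oplus\dim V_\rho}$ and use the binomial decomposition of $\Alt^n$ (resp.\ $\Sym^n$) of a direct sum, together with the obvious fact that $\Alt^n$ (resp.\ $\Sym^n$) of a direct factor is a direct factor. The paper instead asserts isomorphisms $\Sym^n(M\otimes V)\cong\Sym^n M\otimes\Sym^n V$ and $\Alt^n(M\otimes V)\cong\Alt^n M\otimes\Alt^n V$. The second of these is actually false in general (for $M=\ind$ and $V=F^2$ one has $\Alt^2(M\otimes V)\cong\ind$ while $\Alt^2 M\otimes\Alt^2 V=0$), so your route via the binomial formula is not merely a stylistic variant but a cleaner and fully justified argument for the same (correct) conclusion.
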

\begin{proof}
On a des isomorphismes $\Sym^n(M\otimes V)\isom \Sym^n(M)\otimes \Sym^n(V)$ et
$\Alt^n(M\otimes V)\isom \Alt^n(M) \otimes \Alt^n(V)$. Ainsi si $M$ est
pair (repectivement impair) il en est de m\^eme pour
$M\otimes V$. En particulier si $M$ est de dimension finie, $M\otimes
V$ est de dimension finie. Or $(M\otimes V)^G$ est un facteur direct
de $M\otimes V$, et un facteur direct d'un objet de dimension finie
est de dimension finie.
\end{proof}
\begin{prop}\label{prop:lmdmdimzero}
Si $M$ est un motif d'Artin muni d'une action de $G$ et $\rho$ une
$F$-repr\'esentation de $G$, alors $(M\otimes V_{\rho})^G$ est encore un motif d'Artin.
En particulier $\LmDM(M,G,\rho,t)^{-1}$ est un \'el\'ement de $1+\koAMk{F}[t]^+$.
\end{prop}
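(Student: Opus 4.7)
Mon plan procède en deux temps.

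Pour la première assertion, je montrerais que $(M\otimes V_{\rho})^G$ appartient à $\AMk{F}$. L'observation clé est que, $V_{\rho}$ étant un $F$-espace vectoriel de dimension finie $n$, l'objet $M\otimes V_{\rho}$ est isomorphe dans $\chmk{F}$ à la somme directe de $n$ copies de $M$ (sans considération de l'action de $G$), donc appartient à la sous-catégorie $\AMk{F}$, celle-ci étant évidemment stable par sommes directes finies. Or $(M\otimes V_{\rho})^G$ est, par définition, l'image du projecteur $\frac{1}{\card{G}}\sum_{g\in G}g\otimes\rho(g)$ agissant sur $M\otimes V_{\rho}$. Il suffira alors d'invoquer la stabilité de $\AMk{F}$ par facteurs directs, laquelle résulte de l'équivalence de catégories tensorielles \eqref{eq:cat:artin} entre $\AMk{F}$ et la catégorie abélienne $\gk{F}$.

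Pour la seconde assertion, posons $N\eqdef (M\otimes V_{\rho})^G$, de sorte que $\LmDM(M,G,\rho,t)=\Zm(N,t)$. Transporté par l'équivalence \eqref{eq:cat:artin}, le motif d'Artin $N$ correspond à une $F$-représentation $V$ de $\galabs{k}$ de dimension finie, et $\Alt^n N$ correspond à $\Alt^n V$, qui est nul dès que $n>\dim_F V$ (et, pour les autres valeurs, $\Alt^n N$ est a priori un motif d'Artin). La relation \eqref{eq:symnaltn} donne alors directement
\begin{equation*}
\LmDM(M,G,\rho,t)^{-1}=\Zm(N,t)^{-1}=\sum_{n=0}^{\dim_F V}(-1)^n\,\symb{\Alt^n N}\,t^n,
\end{equation*}
qui est bien un élément de $1+\koAMk{F}[t]^+$.

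Le point qui demandera le plus de soin est l'invocation du caractère tensoriel de l'équivalence \eqref{eq:cat:artin} (fait standard, mais essentiel ici puisqu'il légitime l'identification des puissances alternées de part et d'autre, et partant leur annulation en degré strictement supérieur à $\dim_F V$). Tout le reste ne consiste qu'en des manipulations formelles dans la catégorie pseudo-abélienne $F$-linéaire $\chmk{F}$.
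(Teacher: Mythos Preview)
Your proposal is correct and follows essentially the same approach as the paper. The paper's proof is much terser: for the first assertion it simply says ``imm\'ediate'', and for the second it invokes the fact that Artin motives are \emph{pairs} (even) together with formula~\eqref{eq:symnaltn}; your argument is just an explicit unpacking of these two statements via the equivalence~\eqref{eq:cat:artin}.
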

\begin{proof}
La premi\`ere assertion est imm\'ediate. 
Compte tenu du fait que les motifs d'Artin sont pairs et de la formule \eqref{eq:symnaltn},
la deuxi\`eme en d\'ecoule.
\end{proof}
\begin{lemme}\label{lm:well:known}
Soit $G$ un groupe fini et $\rho$ une $F$-repr\'esentation de $G$.
On consid\`ere la structure de $G$-module sur $W\eqdef F[G]\otimes V_{\rho}$
donn\'ee par la r\'eguli\`ere gauche sur $F[G]$ et l'action triviale sur $V_{\rho}$.
Alors l'endomorphisme 
$\pi_{G,\rho}\eqdef \frac{1}{\card{G}}\sum_{g\in G}
\rho_d(g)\otimes \rho(g)$ 
($\rho_d$ d\'esignant la r\'eguli\`ere
droite) est un projecteur $G$-\'equivariant de $W$, d'image
$G$-isomorphe \`a $V_{\rho}$.
\end{lemme}
\begin{proof}
On v\'erifie que l'application
\begin{equation}
\map{V_{\rho}}{\im(\pi_{G,\rho})}{v}{\sumu{g\in G} g\otimes \rho(g)v}
\end{equation}
est un isomorphisme $G$-\'equivariant.
\end{proof}
La proposition suivante pr\'ecise 
les propositions 13.3.1.2 de \cite{And:mot} et  4.5 de \cite{DhMi:motivic_chebotarev}.
Remarquons que dans ces deux derniers \'enonc\'es, il est n\'ecessaire de supposer
la courbe g\'eom\'etriquement int\`egre.

\begin{prop}\label{prop:lmdmcourbe}
Soit $\courbe$ une $k$-courbe projective et  lisse.
\begin{enumerate}
\item\label{item:0:prop:lmdmcourbe}
$\Zm(\courbe,t)$ est rationnelle. Plus pr\'ecis\'ement, si $\courbe$ est irr\'eductible et $k'$ est le corps des constantes
de $\courbe$, la s\'erie formelle\footnote{La proposition
  \ref{prop:lmdmdimzero} montre que $\Zm(k',t)^{-1}$ 
est dans $1+\kochmk{F}[t]^+$.}
\begin{equation}
\Zm(k',t)^{-1}\,\Zm(k',\bL\,t)^{-1}\,\Zm(\courbe,t)
\end{equation} 
est un \'el\'ement de $1+\kochmk{F}[t]^+$. En particulier, si $\courbe$ est g\'eom\'etriquement int\`egre 
on a 
\begin{equation}
(1-t)\,(1-\bL\,t)\,\Zm(\courbe,t)\in 1+\kochmk{F}[t]^+
\end{equation} 
\suspend{enumerate}
On suppose \`a pr\'esent que $\courbe$ est irr\'eductible et qu'un goupe fini $G$ agit sur $\courbe$.
Rappelons (cf. proposition \ref{prop:act:g}) que $\Gop$ agit alors naturellement sur $h^0(\courbe)$ et $h^2(\courbe)$.
Soit $\rho$ une $F$-repr\'esentation de $G$.
\resume{enumerate}
\item\label{item:new:prop:lmdmcourbe}
$\LmDM(\courbe,G,\rho,t)$ est rationnelle. Plus pr\'ecis\'ement\footnote{
La proposition  \ref{prop:lmdmdimzero} montre que $\Zm((h^0(\courbe)\otimes V_{\rho{^{\text{op}}}})^{G^{\text{op}}},t)^{-1}$
est dans $1+\kochmk{F}[t]^+$.}, on a 
\begin{multline}
\Zm((h^0(\courbe)\otimes V_{\rhop})^{\Gop},t)^{-1}\,
\Zm((h^0(\courbe)\otimes V_{\rhop})^{\Gop},\bL\,t)^{-1}\,
\LmDM(\courbe,G,\rho,t)
\\
\in 1+\kochmk{F}[t]^+
\end{multline}
\item\label{item:2:prop:lmdmcourbe}
On suppose que l'action de $\Gop$ sur $h^0(\courbe)$ et $h^2(\courbe)$ est triviale.
\begin{enumerate}
\item\label{item:2a:prop:lmdmcourbe}
Pour toute $F$-repr\'esentation $\rho$ irr\'eductible non triviale, $\LmDM(\courbe,G,\rho,t)$ est un polyn\^ome.
\item\label{item:2b:prop:lmdmcourbe}
Pour toute $F$-repr\'esentation $\rho$, on a 
\begin{equation}
\left[\Zm(k',t)^{-1}\,\Zm(k',\bL\,t)^{-1}\right]^{\rg(V_{\rho}^G)}\,\LmDM(\courbe,G,\rho,t)\in 1+\kochmk{F}[t]^+.
\end{equation} 
En particulier, si $\courbe$ est g\'eom\'etriquement int\`egre on a
\begin{equation}
\left[(1-t)\,(1-\bL\,t)\right]^{\rg(V_{\rho}^G)}\,\LmDM(\courbe,G,\rho,t)\in 1+\kochmk{F}[t]^+.
\end{equation}
\end{enumerate}
\item\label{item:3:prop:lmdmcourbe}
On suppose que $\courbe=Y\times_k k'$,
o\`u $Y$ est g\'eom\'etriquement int\`egre,
$\Gop=\Gal(k'/k)$ et $G$ agit sur $Y\times_kk'$ via l'action naturelle
sur le deuxi\`eme facteur.
Pour toute $F$-repr\'esentation $\rho$ de $G$, on a alors
\begin{equation}
\left[
\Zm(V_{\rhop},t)^{-1}
\,
\Zm(V_{\rhop},\bL\,t)^{-1}
\right]\,
\LmDM(\courbe,G,\rho,t)\in 1+\kochmk{F}[t]^+.
\end{equation}
\end{enumerate}
\end{prop}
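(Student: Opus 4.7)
The plan is to derive all four assertions from the Chow--K\"unneth decomposition of $\courbe$,
\[
h(\courbe) = h^0(\courbe)\oplus h^1(\courbe)\oplus h^2(\courbe),
\]
where $h^0(\courbe)$ and $h^2(\courbe)$ are the direct summands carved out by the (orthogonal) idempotents $p_x,p'_x$ of Notations~\ref{notas:px}, with $h^0(\courbe)\cong h(\Spec k')$ and $h^2(\courbe)\cong h(\Spec k')(-1)$, while $h^1(\courbe)$ denotes the complementary summand. The essential input is the classical fact (see del Ba{\~n}o's thesis for an account in the Chow-motivic setting, via the Jacobian) that $h^1(\courbe)$ is oddly finite-dimensional, so that by Proposition~\ref{prop:zm:dimf} its Hasse--Weil zeta $\Zm(h^1(\courbe),t)$ is a polynomial. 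Since $\Sym^n(A\oplus B)\cong \bigoplus_{i+j=n}\Sym^iA\otimes\Sym^jB$, the map $\Zm(\,\cdot\,,t)$ is multiplicative on direct sums; combined with \eqref{eq:zmd} applied to $h^2(\courbe)$, this yields $\Zm(\courbe,t) = \Zm(k',t)\cdot \Zm(h^1(\courbe),t)\cdot \Zm(k',\bL t)$, proving~(\ref{item:0:prop:lmdmcourbe}).

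For~(\ref{item:new:prop:lmdmcourbe}), Proposition~\ref{prop:act:g} shows that the $\Gop$-action on $h(\courbe)$ is compatible with $p_x$ and $p'_x$, so Lemma~\ref{lm:MG} yields $(h(\courbe)\otimes V_{\rhop})^{\Gop}=\bigoplus_{i=0}^{2}(h^i(\courbe)\otimes V_{\rhop})^{\Gop}$, and applying $\Zm$ factors $\LmDM(\courbe,G,\rho,t)$ as a product of three terms. The $i=0$ term is an Artin motive by the argument of Proposition~\ref{prop:lmdmdimzero}, hence even. The $i=2$ term is the $(-1)$-Tate twist of the $i=0$ term (Tate twist commutes with $\otimes V_{\rhop}$ and with $\Gop$-invariants, since $\bL$ carries trivial $\Gop$-action), so \eqref{eq:zmd} rewrites its $\Zm$ as $\Zm((h^0(\courbe)\otimes V_{\rhop})^{\Gop},\bL t)$. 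The $i=1$ term is a direct summand of $h^1(\courbe)\otimes V_{\rhop}$, which is odd (odd tensor even, with $V_{\rhop}$ motivically a sum of copies of $\mathbf{1}$), hence itself odd, so its $\Zm$ is a polynomial by Proposition~\ref{prop:zm:dimf}. This establishes~(\ref{item:new:prop:lmdmcourbe}).

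Parts~(\ref{item:2:prop:lmdmcourbe}) and~(\ref{item:3:prop:lmdmcourbe}) amount to identifying the $i=0$ factor under their respective hypotheses. Under the triviality assumption of~(\ref{item:2:prop:lmdmcourbe}), $(h^0(\courbe)\otimes V_{\rhop})^{\Gop}=h^0(\courbe)\otimes V_{\rhop}^{\Gop}$: this vanishes for $\rho$ nontrivial irreducible (yielding~(\ref{item:2a:prop:lmdmcourbe})), and equals $h^0(\courbe)^{\oplus r}$ with $r=\rg(V_{\rho}^G)$ in general, whose $\Zm$ is $\Zm(k',t)^r$; together with the contribution of the Tate-twisted $i=2$ term, this gives~(\ref{item:2b:prop:lmdmcourbe}). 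For~(\ref{item:3:prop:lmdmcourbe}), the plan is to use the Artin equivalence~\eqref{eq:cat:artin} to identify $h(\Spec k')$ with $F[G]$ equipped with the left-regular Galois action (factoring through $\galabs{k}\twoheadrightarrow G$) and the right-regular $\Gop$-action arising from Notations~\ref{notas:px}, and then apply the $\Gop$-equivariant twist $F[G]\otimes V_{\rhop}\isom F[G]\otimes V$, $e_g\otimes v\mapsto e_g\otimes \rho(g)v$, which transfers the left-regular Galois action to a diagonal action whose $V$-component is $\rhop$; taking $\Gop$-invariants collapses $F[G]$ to its one-dimensional invariant line and leaves $V_{\rhop}$, so that $(h(\Spec k')\otimes V_{\rhop})^{\Gop}\cong V_{\rhop}$ as Artin motives. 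The $i=0$ factor thus contributes $\Zm(V_{\rhop},t)$ and the $i=2$ factor contributes $\Zm(V_{\rhop},\bL t)$, giving~(\ref{item:3:prop:lmdmcourbe}). The only delicate step in the whole plan is this final Shapiro-type identification, where the various left/right/op conventions must be tracked carefully; everything else follows mechanically from Chow--K\"unneth, the oddness of $h^1(\courbe)$, and the general behaviour of $\Zm$ with respect to direct sums and Tate twists.
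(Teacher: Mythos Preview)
Your proposal is correct and follows essentially the same route as the paper: Chow--K\"unneth decomposition of $h(\courbe)$, oddness of $h^1(\courbe)$ (the paper cites \cite[Theorem~4.2]{Kim:finite_dim} rather than del Ba\~no), compatibility of the $\Gop$-action with the idempotents via Proposition~\ref{prop:act:g} and Lemma~\ref{lm:MG}, and the parity argument of Proposition~\ref{prop:LmDMrat} for the $h^1$ piece. For part~(\ref{item:3:prop:lmdmcourbe}) the paper also identifies $h(\Spec k')$ with $F[\Gal(k'/k)]$ carrying the left-regular Galois action and right-regular equivariant $\Gop$-action, and then simply invokes Lemma~\ref{lm:well:known}; your explicit ``twist'' map is precisely the content of the proof of that lemma, so the two arguments coincide.
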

\begin{proof}
On reprend les notations \ref{notas:px}. Soit $h^1(\courbe)$ l'image du
projecteur $\Id-p_x-p'_x$. Alors $h^1(\courbe)$ est impair
(\cite[Theorem 4.2]{Kim:finite_dim}),
donc (proposition \ref{prop:zm:dimf}) $\Zm(h^1(\courbe),t)$ est dans $1+\kochmk{\bQ}[t]^+$.
La d\'ecomposition
\begin{equation}
h(\courbe)=h^0(\courbe)\oplus h^1(\courbe) \oplus h^2(\courbe)
\end{equation}
induit la d\'ecomposition
\begin{equation}
\Zm(\courbe,t)=\Zm(h^0(\courbe),t)\,\Zm(h^1(\courbe),t)\,\Zm(h^2(\courbe),t).
\end{equation}
Des isomorphismes $h^0(\courbe)\isom h(\Spec(k'))$ et 
$h^2(\courbe)\isom h(\Spec(k'))(-1)$ on d\'eduit le point \ref{item:0:prop:lmdmcourbe}.

Comme $p_x$ et $p'_x$ sont compatibles \`a l'action de $\Gop$
(proposition \ref{prop:act:g}), $\Id-p_x-p'_x$ l'est \'egalement  et
on a  (lemme \ref{lm:MG})
\begin{equation}
\left(h(\courbe)\otimes V_{\rho}\right)^{\Gop }
=
\left(h^0(\courbe)\otimes V_{\rho}\right)^{\Gop } 
\oplus
\left(h^1(\courbe)\otimes V_{\rho}\right)^{\Gop } 
\oplus 
\left(h^0(\courbe)\otimes V_{\rho}\right)^{\Gop }
\end{equation}
d'o\`u une d\'ecomposition
\begin{multline}
\LmDM(\courbe,G,\rho,t)
\\
=
\Zm((h^0(\courbe)\otimes V_{\rho})^{\Gop },t)\,
\Zm((h^1(\courbe)\otimes V_{\rho})^{\Gop },t)
\Zm((h^2(\courbe)\otimes V_{\rho})^{\Gop },t).
\end{multline}
Comme $h^1(\courbe)$ est impair, $(h^1(\courbe)\otimes V_{\rho})^{G^{\text{op}}}$
l'est \'egalement (proposition \ref{prop:LmDMrat}) et 
$\Zm((h^1(\courbe)\otimes V_{\rho})^{G^{\text{op}}},t)$ 
est un \'el\'ement de $1+\kochmk{F}[t]^+$. 
Par ailleurs, on a
\begin{align}
\Zm((h^2(\courbe)\otimes V_{\rho})^{\Gop },t)
&
=
\Zm((h^0(\courbe)(-1)\otimes V_{\rho})^{\Gop },t)
\\
&
=
\Zm((h^0(\courbe)\otimes V_{\rho})^{\Gop }(-1),t)
\\
&
=
\Zm((h^0(\courbe)\otimes V_{\rho})^{\Gop },\bL\,t).
\end{align}
Ceci montre le point \ref{item:new:prop:lmdmcourbe}.

Pour montrer le point \ref{item:2:prop:lmdmcourbe}, on remarque que si l'action
de $\Gop$ sur $h^0(\courbe)\isom h(\Spec(k'))$ est triviale, et si $\rho$ est
irr\'eductible, on a $\left(h^0(\courbe)\otimes V_{\rhop}\right)^{\Gop}=h^0(\courbe)\otimes V_{\rhop}^{\Gop}=0$
Si $\rho$ est quelconque, on obtient donc en
d\'ecomposant $V_{\rho}$ en somme de $G$-repr\'esentations irr\'eductibles un
isomorphisme de $F$-$\galabs{k}$-repr\'esentations discr\`etes 
\begin{equation}
\left(h^0(\courbe)\otimes V_{\rhop}\right)^{\Gop}
\isom
h^0(\courbe)^{\dim(V_{\rho}^{G})}
\isom 
h(\Spec(k'))^{\dim(V_{\rho}^{G})}
\end{equation}
et on applique le point \ref{item:new:prop:lmdmcourbe}.

Montrons a pr\'esent le point \ref{item:3:prop:lmdmcourbe}.
Via l'\'equivalence de cat\'egories \eqref{eq:cat:artin}, le motif d'Artin $h(\Spec(k'))$ s'identifie \`a la repr\'esentation
r\'eguli\`ere gauche de $\Gal(k'/k)$. D'apr\`es la proposition \ref{prop:act:g}, 
l'action de $\Gal(k'/k)$ sur $h^0(\courbe)\isom h(\Spec(k'))\isom F[\Gal(k'/k)]$ induite par l'action de $\Gal(k'/k)^{\text{op}}$ sur
$Y\times_k k'$ est la repr\'esentation r\'eguli\`ere droite.
On applique alors le point \ref{item:new:prop:lmdmcourbe} et le lemme \ref{lm:well:known}.
\end{proof}

\section{La fonction $L$ d'Artin motivique d\'efinie comme produit eulerien motivique}\label{sec:L:prodeulmot}
Dans cette section, nous allons donner, pour un corps $k$ de
caract\'eristique z\'ero, 
une autre d\'efinition de la fonction $L$ d'Artin
motivique attach\'ee \`a une $k$-$G$-vari\'et\'e  quasi-projective et une
$\bQ$-repr\'esentation  de $G$,
sous forme d'un \og produit eul\'erien motivique\fg, et 
montrer que la fonction obtenue
co\"\i ncide avec celle de Dhillon et Minac. 

\subsection{Motif virtuel associ\'e \`a une formule}

Concernant les rappels que contient cette sous-section, on renvoie \`a \cite{DeLo:def_sets_motives},
\cite{DeLo:grot_pff} ou \cite{Nic:rel:motive} pour plus de d\'etails. 
Dans toute la suite, on appelle \termin{$k$-formule} une formule logique
du premier ordre dans le langage des anneaux \`a coefficients dans un
corps $k$.
Pour toute formule $\varphi$  en $n$ variables libres et 
toute extension $K$ de $k$ on notera $\varphi(K)$ le sous-ensemble
de $K^n$ constitu\'e des \'el\'ements de $K^n$ satisfaisant $\varphi$. 
Si $X$ est une $k$-vari\'et\'e quasi-affine, on appellera \termin{formule sur $X$}
toute formule  en $n$ variables libres de la forme $\varphi \wedge \varphi_X$
o\`u $\varphi$ est une formule en $n$ variables libres et $\varphi_X$ la formule d\'efinissant les \'equations
d'un plongement de $X$ dans l'espace affine $\bA^n$.

Un \termin{corps pseudo-fini} est un corps parfait $K$ 
v\'erifiant les propri\'et\'es suivantes : 
\begin{enumerate}
\item
toute vari\'et\'e g\'eom\'etriquement irr\'eductible sur $K$ a un point rationnel dans $K$ ;
\item
une cl\^oture alg\'ebrique de $K$ \'etant fix\'ee,  pour tout $n\geq 1$,
$K$ admet  une unique extension de degr\'e $n$ dans cette cl\^oture alg\'ebrique.
\end{enumerate}
Tout corps admet une extension qui est un corps pseudo-fini.

Soient $\varphi$ et $\psi$ des formules \`a coefficients dans $k$ 
en les variables libres $(x_1,\dots,x_m)$
et $(y_1,\dots,y_n)$, respectivement.
On dit que $\varphi$ est un \termin{$d$-rev\^etement de $\psi$} 
s'il existe une formule 
$\theta$ en les variables libres $(x_1,\dots,x_m,y_1,\dots,y_n)$ tel
que pour tout corps pseudo-fini $K$ contenant $k$, l'ensemble
$\theta(K)\subset K^n\times K^m$
est le graphe d'une application $d$ pour $1$ de $\varphi(K)$
vers $\psi(K)$. Les formules $\varphi$ et $\psi$ sont dites
\termin{logiquement \'equivalentes} si $\varphi$ est un $1$-rev\^etement de $\psi$.

L'anneau de Grothendieck de la th\'eorie des corps pseudo-finis sur $k$, 
not\'e $\kpff$,
est  engendr\'e comme groupe par les symboles $[\varphi]$, o\`u $\varphi$ est une $k$-formule, et les relations $[\varphi]=[\psi]$ si $\varphi$
et $\psi$ sont logiquement \'equivalentes, ainsi que 
$
[\varphi\vee\psi]+[\varphi\wedge \psi]
=
[\varphi]+[\psi]
$
si $\varphi$ et $\psi$ ont les m\^emes variables libres.
Le produit est d\'efini par
$
[\varphi].[\psi]\eqdef\symb{\varphi\wedge\psi}.
$

Si $k$ est de caract\'eristique z\'ero, 
Denef et Loeser ont montr\'e dans \cite{DeLo:def_sets_motives} et 
 \cite{DeLo:grot_pff} comment associer de mani\`ere canonique \`a toute
 $k$-formule un motif de Chow virtuel
\og avec d\'enominateur \fg, 
i.e. un \'el\'ement de $\kochmkq{\bQ}$.
Dans \cite{Nic:rel:motive}
cette construction est \'etendue \`a un cadre relatif.
Pour d\'emontrer ces r\'esultats, une utilisation cruciale est faite de la th\'eorie de l'\'elimination des
quantificateurs dans les corps pseudo-finis en termes de formules
galoisiennes,  
d\^ue \`a Fried, Jarden et Sacerdote.
Rappelons ce qu'est une formule galoisienne : soit $X$ une vari\'et\'e
affine normale munie d'une action libre d'un groupe fini $G$.
Si $\cC$ est une classe de conjugaison de sous-groupes cycliques de
$G$, on note $\frev{X,G,\cC}$ une formule sur $X/G$
telle que, pour toute $k$-extension  pseudo-finie $K$, 
$\frev{X,G,\cC}(K)$
s'identifie \`a l'ensemble des \'el\'ements de $(X/G)(K)$ qui admette
les \'el\'ements de $\cC$ comme  groupes de d\'ecomposition dans $X\to X/G$.
Si $C$ est un \'el\'ement de $\cC$, ce dernier ensemble co\"\i ncide avec
l'ensemble des \'el\'ements de $(X/G)(K)$ qui se rel\`event \`a un \'el\'ement
de $(X/C)(K)$ mais pas \`a un \'el\'ement de $(X/D)(K)$ pour tout sous-groupe strict $D$ de $C$,
ce qui montre l'existence d'une telle formule d'anneau.

On a alors le r\'esultat suivant (cf. \cite[Theorem 2.1]{DeLo:grot_pff}
et \cite[Lemma 8.5]{Nic:rel:motive}).
\begin{thm}[Denef-Loeser,Nicaise]
\label{thm:denefloeser}
Soit $k$ un corps de caract\'eristique z\'ero.
Il existe un unique morphisme d'anneaux
\begin{equation}
\chif\,:\,\kpff\longto \kovchmkq{\bQ}
\end{equation}
qui envoie la classe d'une formule qui est une conjonction d'\'equations polyn\^omiale
sur la classe de la vari\'et\'e affine d\'efinie par ces \'equations 
et qui satisfait, pour toutes formules $\varphi$ et $\psi$ telle que $\varphi$
est un $d$-rev\^etement de $\psi$,
\begin{equation}
\chif\left(\varphi\right)=d\,\chif\left(\psi\right).
\end{equation}

Ce morphisme poss\`ede en outre les propri\'et\'es suivantes :
\begin{enumerate}
\item
si $X$ est une $k$-vari\'et\'e affine normale munie d'une action libre
de $G$ et $\cC$ est une classe de
conjugaison de sous-groupes cycliques de $G$, on a (cf. th\'eor\`eme \ref{thm:chiveq})
\begin{equation}\label{eq:chif:frev:chieq}
\chif\left(\frev{X,G,\cC}\right)=\chieq(X,\theta_\cC)
\end{equation}
o\`u $\theta_{\cC}$ est la fonction
\begin{equation}
\theta_{\cC}\,:\,g\mapsto
\left\{
\begin{array}{cl}
1&\text{si le groupe engendr\'e par $g$ est dans }\cC\\
0&\text{sinon.}
\end{array}
\right.
\end{equation}
\item\label{item:nbre:pts:mod:p}
Supposons que $k$ soit un corps de nombres. Soit $\ell$
un nombre premier. Soit $\varphi\in \kpff$. 
Alors pour presque tout id\'eal premier non nul
$\mfp$ de $k$ 
on a $\Trp(\chil(\chif(\varphi)))=\card{\varphi(\kappa_{\mfp})}$.
\end{enumerate}
\end{thm}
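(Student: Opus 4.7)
The plan is to reduce, via the Fried--Jarden--Sacerdote elimination of quantifiers in the theory of pseudo-finite fields, to the case of Galois formulas: the class in $\kpff$ of every $k$-formula can be written as a $\bZ$-linear combination of classes of the form $\symb{\frev{X,G,\cC}}$, where $X$ is a normal affine $k$-variety equipped with a free action of a finite group $G$ and $\cC$ is a conjugacy class of cyclic subgroups of $G$. Uniqueness of $\chif$ is then immediate, since \eqref{eq:chif:frev:chieq} prescribes its values on a system of generators and the universal property of $\kpff$ ensures there is at most one ring morphism with these values.

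For existence, we set $\chif(\frev{X,G,\cC}) \eqdef \chieq(X,\theta_{\cC})$ using Theorem~\ref{thm:chiveq}, and then verify compatibility with the relations defining $\kpff$. Logical equivalence of two Galois formulas translates, via Fried--Jarden--Sacerdote, into an isomorphism of the underlying data $(X,G,\cC)$ up to natural operations (restriction to a $G$-invariant open, induction from a subgroup), so $\chieq$ agrees on both sides. The Boolean relations reduce, upon choosing a common $G$-equivariant stratification realizing two given Galois formulas, to the additivity of $\chieq(X,-)$ as a function of $\alpha \in \ecC(G,\bQ)$, combined with elementary additive identities among the characteristic functions $\theta_\cC$. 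Multiplicativity of $\chif$ follows from the fact that a fiber product of Galois covers is itself Galois for the product group, together with the compatibility of $\chieq$ with exterior products of $G$-varieties.

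The main obstacle is the $d$-covering relation $\chif(\varphi)=d\,\chif(\psi)$. Given such a covering, one passes to a common Galois hull $X\to X/G$ realizing both $\varphi$ and $\psi$: the covering condition translates into an identity $\alpha' = d\,\alpha$ between the corresponding central functions on $G$, and the relation $\chieq(X,\alpha')=d\,\chieq(X,\alpha)$ is then a consequence of additivity in the second argument. This factor $d$, combined with the appearance of the idempotents $\tfrac{1}{\card{G}}\sum \rho(g^{-1})\otimes [g]$ from Theorem~\ref{thm:chiveq}, is precisely what forces the target ring to be $\kovchmkq{\bQ}$ rather than $\kovchmk{\bQ}$. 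Finally, property~(1) is built into the definition. For property~(2), fix smooth integral models of a Galois cover $X\to X/G$ over $\Spec(\cO_{k,S})$ with $S$ containing the primes of bad reduction, the primes ramified in $k(X)/k(X/G)$, and the divisors of $\card{G}$; for $\mfp \notin S$ one applies the Grothendieck--Lefschetz trace formula to the $\ell$-adic cohomology of $X_\mfp$ with its $G$-action. Sorting the $\kappa_\mfp$-points of $(X/G)_\mfp$ by the conjugacy class of their Frobenius decomposition group, as encoded by the central function $\theta_\cC$, identifies $\card{\varphi(\kappa_\mfp)}$ with $\Trp(\chil(\chif(\varphi)))$, as required.
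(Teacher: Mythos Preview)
The paper does not give its own proof of this theorem: it is stated as a result of Denef--Loeser and Nicaise, with references to \cite[Theorem 2.1]{DeLo:grot_pff} and \cite[Lemma 8.5]{Nic:rel:motive} in lieu of an argument. Your sketch follows the architecture of the Denef--Loeser construction (quantifier elimination to Galois formulas, definition via $\chieq$, verification of relations, trace formula for property~(2)), so in that sense it is aligned with what the paper invokes.

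One point of logical structure is off. You derive uniqueness from the fact that \eqref{eq:chif:frev:chieq} prescribes $\chif$ on generators. But \eqref{eq:chif:frev:chieq} is property~(1), which the theorem lists as a \emph{consequence}; the characterizing conditions are only that $\chif$ sends polynomial systems to the corresponding affine varieties and respects $d$-coverings. Uniqueness from these alone requires the observation (implicit in D\'efinition~\ref{def:car:non:nulle:bis}) that the class of $\frev{X,G,\cC}$ can be written, via M\"obius inversion over cyclic subgroups together with the $d$-covering relation, as a $\bQ$-linear combination of classes $[X/C]$, whose images are fixed by the first condition. As written, your argument only yields uniqueness among morphisms that are \emph{also} assumed to satisfy property~(1).

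The genuinely delicate step in the Denef--Loeser proof is the well-definedness of $\chif$: one must show that $\chieq(X,\theta_\cC)$ depends only on the class of $\frev{X,G,\cC}$ in $\kpff$ and not on the particular presentation $(X,G,\cC)$, and that the resulting map respects all the relations of $\kpff$. Your phrase ``isomorphism of the underlying data up to natural operations'' names the issue but does not resolve it; this is precisely where the cited references do the work. Your outline for property~(2) is correct and matches what the paper itself sketches in Remarque~\ref{rem:car:non:nulle:bis}.
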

\begin{defi}\label{def:car:non:nulle:bis}
Soit $k$ un corps quelconque, $G$ un groupe fini
et $X$ une $k$-vari\'et\'e quasi-projective  munie d'une action libre
de $G$.
Si $X$ est projective et lisse, on d\'efinit $\chif(\frev{X,G,\cC})\in \kochmkq{\bQ}$ par la relation
\eqref{eq:chif:frev:chieq} (cf. la d\'efinition \ref{def:car:non:nulle}).
Dans le cas g\'en\'eral, on d\'efinit
$\symbv{\frev{X,G,\cC}}\in\kovark\otimes\bQ$
comme suit~: 
si $G$ est cyclique, on d\'efinit
$\symbv{\frev{X,G,G}}$
par r\'ecurrence sur $\card{G}$ en utilisant la relation 
\begin{equation}
\card{G}\,\symb{X/G}=\sum_{C<G} \card{C}\,\symbv{\frev{X,C,C}}\,;
\end{equation}
pour $G$ quelconque et $\cC$ une classe de conjugaison de sous-groupes
cycliques de $G$, on pose 
$
\symbv{\frev{X,G,\cC}}=\frac{\card{C}}{\card{N_G(C)}}\,\symbv{\frev{X,C,C}}
$
o\`u $C$ est un \'el\'ement de $\cC$.
\end{defi}
\begin{rem}\label{rem:car:non:nulle:bis}
Supposons $X$ projective et lisse.
Si $k$ est un corps global,
en raisonnant comme dans la preuve du lemme 3.3.2 de 
\cite{DeLo:def_sets_motives}, on voit 
que pour presque toute place
finie $\mfp$ de $k$ la quantit\'e
$\Trp(\chil(\chif(\frev{X,G,\cC})))$
est \'egale au cardinal de l'ensemble des \'el\'ements de $(X/G)(\kappa_{\mfp})$ ayant
d\'ecomposition $\cC$ dans le rev\^etement $X\to X/G$.

Supposons \`a pr\'esent $k$ fini. 
Soit $F_k\in \galabs{k}$ le Frobenius g\'eom\'etrique. Pour $r\geq 1$, on
note $k_r$ l'extension de degr\'e $r$ de $k$.
Toujours en raisonnant comme dans la preuve du lemme 3.3.2 de 
\cite{DeLo:def_sets_motives}, on voit que, pour tout entier $r\geq 1$,
$
\Tr(F_k^r|\chil(\chif(\frev{X,G,\cC})))
$
est \'egal au cardinal de l'ensemble des \'el\'ements de $(X/G)(k_r)$ ayant
d\'ecomposition $\cC$ dans le rev\^etement $X\to X/G$.

Par ailleurs, si $k$ est de caract\'eristique z\'ero et
$X$ est affine normale, le
th\'eor\`eme pr\'ec\'edent montre que, par construction, on a 
\begin{equation}\label{eq:chiv:frev}
\chiv(\symbv{\frev{X,G,\cC}})=\chif(\frev{X,G,\cC}).
\end{equation}
\end{rem}

\subsection{Le motif virtuel  des points ferm\'es de degr\'e $n$}

Nous rappelons dans cette sous-section la construction du paragraphe
2.5 de \cite{Bou:prod:eul:mot}.
Soit $k$ un corps de caract\'eristique z\'ero et $X$ une $k$-vari\'et\'e
quasi-projective.
Pour tout entier $n\geq 1$, on note $\Sym^n(X)_0$ l'ouvert
de $\Sym^n(X)$ constitu\'e des ensembles de $n$ points deux \`a deux distincts.
Le morphisme $\pi_n\,:\,X^n\to \Sym^n(X)$ induit donc un $\sym_n$-rev\^etement \'etale
\begin{equation}
\pi_n\,:\,
\pi_n^{-1}\left(\Sym^n(X)_0\right)
\longto
\Sym^n(X)_0
\end{equation}
Supposons \`a pr\'esent $X$ affine. Soit $\cC_n$ est la classe des sous-groupes de $\sym_n$ engendr\'e par
un $n$-cycle.
On note $\psinx{X}$ 
la formule galoisienne $\frev{\pi_n^{-1}\left(\Sym^n(X)_0\right),\sym_n,\cC_n}$.
\begin{rem}\label{rem:psink}
Pour toute $k$-extension pseudo-finie $K$, l'application qui a un
\'el\'ement 
de $\{x_1,\dots,x_n\}$ de $\psinx{X}(K)$ associe le z\'ero-cycle 
$K$-rationnel $\sum x_i$ induit donc une bijection de $\psinx{X}(K)$
sur l'ensemble des points ferm\'es de degr\'e $n$ de $X_K$.
\end{rem}
La classe de $\psinx{X}$ 
dans $\kpff$ ne d\'epend pas du choix du
plongement affine de $X$, on la note encore $\psinx{X}$.
\begin{rem}\label{rem:psin:dim0}
Si $L$ est une $k$-extension finie s\'eparable, on a
$\psinx{\Spec(L)}=0$
si $n>[L:k]$.
\end{rem}
Comme $\kovark$ est engendr\'e par les classes de vari\'et\'es affines et
que pour tout ouvert affine $U$ d'une vari\'et\'e affine $X$ on a la
relation $\psinx{X}=\psinx{U}+\psinx{X\setminus U}$ 
(cf. \cite[Lemme 3.7]{Bou:prod:eul:mot})
il existe un
unique morphisme de groupes 
\begin{equation}
\kovark\to\kpff
\end{equation}
qui envoie la classe d'une vari\'et\'e affine $X$ sur $\psinx{X}$.
On note encore $\psinx{\,.\,}$ le morphisme de groupes $\kovark\to
\kochmkq{\bQ}$ obtenu par composition avec le morphisme
$\chif$ du th\'eor\`eme \ref{thm:denefloeser}. Rappelons l'\'enonc\'e de la proposition 2.17 de \cite{Bou:prod:eul:mot}.
\begin{prop}
Soit $k$ un corps de caract\'eristique z\'ero.
Pour toute $k$-vari\'et\'e $X$, on a la relation
\begin{equation}\label{eq:relpsidzm}
\sum_{n\geq 1} \left(\sum_{d|n} d\,\psidx{X}\right)\frac{t^n}{n} 
=t\,\frac{d\log}{dt} \Zm(X,t)
\end{equation}
soit de mani\`ere \'equivalente
\begin{equation}\label{eq:relpsinphin}
\forall n\geq 1,\quad \Phi_n(X)=\sum_{d|n} d\,\psidx{X}.
\end{equation}
\end{prop}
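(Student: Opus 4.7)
The equivalent second form $\Phi_n(X)=\sum_{d\mid n} d\,\psidx{X}$ shows that both sides of the identity are group morphisms $\kovark\to \kochmkq{\bQ}$. On the one hand, $X\mapsto \Phi_n(X)$ is additive because $\Zm(\,\cdot\,,t)$ sends $\kovark$ to the multiplicative group $(1+\kochmkq{\bQ}[[t]]^+,\cdot)$, and the logarithm (hence its derivative) converts multiplication into addition. On the other hand, $X\mapsto \psidx{X}$ is additive by its very construction, through the scissor relation $\psinx{X}=\psinx{U}+\psinx{X\setminus U}$ used to extend the definition from affine varieties to all of $\kovark$. It therefore suffices to prove the identity on a generating family of $\kovark$, e.g.\ on classes of affine varieties, where the formulas $\psinx{X}$ are directly defined.

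The substance of the argument is the motivic Euler product
$$\log \Zm(X,t) \;=\; \sum_{d\geq 1} \psidx{X} \sum_{k\geq 1} \frac{t^{dk}}{k}$$
in $\kochmkq{\bQ}[[t]]$, from which both displayed identities follow: collecting terms according to $n=dk$ gives $\log \Zm(X,t) = \sum_{n\geq 1}\bigl(\sum_{d\mid n} d\,\psidx{X}\bigr)\frac{t^n}{n}$, and applying $t\,\frac{d}{dt}$ then yields the form of the proposition, with the equivalent second form obtained by reading off the coefficient of $t^n/n$ via the defining relation for $\Phi_n$. The most natural route to the Euler product goes through the Grothendieck ring $\kpff$ of the theory of pseudo-finite fields. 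By Remark \ref{rem:psink}, for any pseudo-finite extension $K$ of $k$ the set $\psinx{X}(K)$ is canonically in bijection with the closed points of $X_K$ of degree $n$; and a $K_n$-point of $X_K$ is equivalent to the datum of a closed point $x\in X_K$ whose degree $d$ divides $n$, together with one of the $d$ possible $K$-embeddings of $\kappa(x)$ into $K_n$. Formalising this bijection uniformly in $K$ at the level of first-order formulas produces the Euler product already in $\kpff[[t]]$; applying the morphism $\chif$ of Theorem \ref{thm:denefloeser} then transports it to $\kochmkq{\bQ}[[t]]$.

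A purely geometric alternative would stratify $\Sym^n X$ by the cycle-type of the underlying $0$-cycle, identify each stratum with a configuration space of distinct closed points of prescribed degrees $d_1,\dots,d_r$ with $\sum_i d_i=n$ (via a quotient by the appropriate product of symmetric groups), and then re-sum these contributions to recover $\log \Zm(X,t)$, in the spirit of Theorem \ref{thm:macdonald:motivique}. The main obstacle in either approach is exactly this step: upgrading the set-theoretic bijection $\card{X(K_n)}=\sum_{d\mid n} d\cdot \card{\psinx{X}(K)}$ into an identity of definable classes (in $\kpff$) or of motivic classes (via the stratification of $\Sym^n X$), naturally in $X$ and uniformly in the parameter $n$.
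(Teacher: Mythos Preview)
The paper does not supply a proof; the proposition is cited as Proposition~2.17 of \cite{Bou:prod:eul:mot}. Your plan correctly isolates the essential content---the motivic Euler product $\Zm(X,t)=\prod_{n\geq 1}(1-t^n)^{-\psinx{X}}$---and the reduction to affine~$X$ by additivity of both sides is valid.

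The gap you flag is real, and one point in your first route deserves sharpening. Agreement of $K$-points over all pseudo-finite~$K$ does not by itself give equality in $\kpff$: one needs a \emph{definable} bijection, and the bijection you invoke passes through $X(K_n)$, which is not the set of $K$-points of any formula you have written down. The argument that actually goes through, whether phrased in $\kpff$ or directly in motives, rests on formula~\eqref{eq:chif:frev:chieq}, which gives $\psinx{X}=\chieq\bigl(\pi_n^{-1}(\Sym^n(X)_0),\theta_{\cC_n}\bigr)$ with $\theta_{\cC_n}$ the indicator of $n$-cycles in $\sym_n$. Expanding $\theta_{\cC_n}$ in $\bQ$-characters of $\sym_n$ via Theorem~\ref{thm:chiveq}, and comparing with the analogous character expansion of $\chiv(\Sym^n X)$, reduces the desired identity to the classical symmetric-function relation between power sums and complete symmetric functions, now read inside $\kochmkq{\bQ}$. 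That equivariant-motive computation is precisely the missing step and is the content of the cited reference.
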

\begin{defi}\label{def:eq:relpsinphin}
Si $k$ est de caract\'eristique non nulle et $X$ est une $k$-vari\'et\'e projective et
lisse, on d\'efinit $\psinx{X}\in \kochmk{\bQ}_{\bQ}$ par la relation \eqref{eq:relpsinphin}.
De m\^eme si $k$ est quelconque et $X$ est une
$k$-vari\'et\'e quasi-projective, on d\'efinit $\psinvar{X}\in
\kovark\otimes \bQ$ par la relation
\begin{equation}\label{eq:relpsinvphinv}
\forall n\geq 1,\quad \phinvar{X}=\sum_{d|n} d\,\psidvar{X}
\end{equation}
(de sorte que si $k$ est de caract\'eristique nulle, on a $\chiv(\psinvar{X})=\psinx{X}$).
\end{defi}
\begin{rem}\label{rm:eq:relpsinphin}
Une autre possibilit\'e pour d\'efinir $\psinvar{X}$ serait bien s\^ur de poser
(cf. d\'efinition \ref{def:car:non:nulle:bis})
$
\psinvar{X}\eqdef\symbv{\frev{\pi_n^{-1}\left(\Sym^n(X)_0\right),\sym_n,\cC_n}}
$.
D'apr\`es \eqref{eq:chiv:frev}, si $k$ est de
caract\'eristique $0$ on a encore $\chiv(\psinvar{X})=\psinx{X}$.
Mais il n'est pas clair que la relation \eqref{eq:relpsinvphinv} soit
alors  v\'erifi\'ee.
\end{rem}
\begin{cor}\label{cor:prop:spec:psin}
Supposons que $k$ soit un corps de nombres. 
Soit $X$ une $k$-vari\'et\'e.
Pour presque toute place finie $\mfp$ de $k$, on a 
la propri\'et\'e suivante :
pour tout $n\geq 1$, 
le nombre de points de $\Phi_n(X)$ (respectivement $\psinx{X}$) modulo $\mfp$
est \'egal au nombre de points de $X_{\mfp}$ \`a valeurs dans une
extension de degr\'e $n$ de $\kappa_{\mfp}$ 
(respectivement au nombre de points ferm\'es de degr\'e $n$ de $X_{\mfp}$).
\end{cor}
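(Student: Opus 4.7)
Le plan est de d\'eduire les deux assertions du corollaire~\ref{cor:delbano}, l'id\'ee cl\'e \'etant de passer par $\Phi_n(X)$ pour traiter tous les $n\geq 1$ simultan\'ement via une seule \'egalit\'e de s\'eries formelles en $t$.

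Premi\`erement, on applique le corollaire~\ref{cor:delbano} \`a $X$ : comme $k$ est de caract\'eristique z\'ero, aucune hypoth\`ese de projectivit\'e ou de lissit\'e n'est requise et on dispose donc, pour presque toute place finie $\mfp$ de $k$, de l'\'egalit\'e $\Trp(\chil(\Zm(X,t)))=\ZHW(X_\mfp,t)$ dans $\Ql[[t]]$. Fixons une telle place $\mfp$. Le morphisme d'anneaux $\Trp\circ\chil$, \'etendu coefficient par coefficient aux s\'eries formelles, commute avec l'op\'erateur de d\'erivation logarithmique $f\mapsto t\,\frac{d\log}{dt}f$ sur les s\'eries de terme constant $1$. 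En l'appliquant \`a la relation d\'efinissante \eqref{eq:defphin}, on obtient l'\'egalit\'e de s\'eries
\begin{equation*}
\sum_{n\geq 1}\Trp(\chil(\Phi_n(X)))\,\frac{t^n}{n}
=
t\,\frac{d\log}{dt}\,\ZHW(X_\mfp,t),
\end{equation*}
dont le membre de droite est la s\'erie classique associ\'ee \`a la fonction z\^eta de Hasse-Weil de la $\kappa_\mfp$-vari\'et\'e $X_\mfp$. L'identification des coefficients de $t^n$ fournit la premi\`ere assertion.

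Pour la seconde assertion, on utilise la relation $\Phi_n(X)=\sum_{d\mid n}d\,\psidx{X}$ donn\'ee par \eqref{eq:relpsinphin}. L'inversion de M\"obius fournit alors l'\'egalit\'e $n\,\psinx{X}=\sum_{d\mid n}\mu(n/d)\,\Phi_d(X)$ dans $\kochmkq{\bQ}$. En appliquant $\Trp\circ\chil$ et en substituant le r\'esultat de la premi\`ere \'etape, le membre de droite s'identifie exactement \`a l'inversion de M\"obius de l'identit\'e classique reliant le nombre de $\kappa_{\mfp,n}$-points de $X_\mfp$ et le nombre de ses points ferm\'es de degr\'e divisant $n$ ; on en d\'eduit que $\Trp(\chil(\psinx{X}))$ est \'egal au nombre de points ferm\'es de degr\'e $n$ de $X_\mfp$.

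Le point conceptuel essentiel est d'\'eviter l'approche na\"\i ve consistant \`a appliquer directement le th\'eor\`eme~\ref{thm:denefloeser}\ref{item:nbre:pts:mod:p} \`a chaque formule $\psinx{X}$ pour $n$ variable, en invoquant la remarque~\ref{rem:psink} pour identifier $\psinx{X}(\kappa_\mfp)$ aux points ferm\'es de degr\'e $n$ de $X_\mfp$. Une telle approche fournirait, pour chaque $n$ fix\'e, un ensemble cofini de places favorables, mais rien ne garantit que l'intersection de cette famille d\'enombrable d'ensembles reste cofinie. En revanche, travailler via la fonction z\^eta de Kapranov ram\`ene le probl\`eme \`a une unique \'egalit\'e dans $\Ql[[t]]$, qui contr\^ole simultan\'ement tous les coefficients, et donc tous les $n$, pour chaque $\mfp$ hors de l'ensemble d'exception fourni par le corollaire~\ref{cor:delbano}.
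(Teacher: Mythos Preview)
Your proof is correct and follows essentially the same approach as the paper's: apply Corollary~\ref{cor:delbano} to obtain the single formal-series identity $\Trp(\chil(\Zm(X,t)))=\ZHW(X_\mfp,t)$, then extract both assertions from the relations \eqref{eq:defphin}--\eqref{eq:relpsinphin} (the paper cites the combined form \eqref{eq:relpsidzm}) together with the classical identities relating the Hasse--Weil zeta function to point counts. Your additional paragraph explaining why one must pass through the zeta function rather than apply Theorem~\ref{thm:denefloeser}\,(\ref{item:nbre:pts:mod:p}) to each $\psinx{X}$ separately is a useful clarification of the uniformity issue that the paper leaves implicit.
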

\begin{proof}
Ceci d\'ecoule du corollaire \ref{cor:delbano}, 
de la relation \eqref{eq:relpsidzm}
et des relations classiques liant fonction z\^eta de Hasse-Weil,
nombre de points ferm\'es de degr\'e $n$ et nombre de points \`a valeurs
dans une extension de degr\'e $n$.
\end{proof}
\subsection{Motif virtuel associ\'e \`a un symbole d'Artin}
Utilisant toujours la construction de Denef et Loeser,
nous d\'efinissons dans cette sous-section des motifs virtuels
associ\'e naturellement aux symboles d'Artin (cf. la
proposition \ref{prop:spec:phixgcin}).
Nous verrons \`a la section \ref{sec:symb:artin} qu'en un certain sens,
ils co\"\i ncident avec les motifs virtuels analogues d\'efinis par
Dhillon et Minac dans \cite{DhMi:motivic_chebotarev}.
Le lemme \'el\'ementaire suivant nous sera utile.
\begin{lemme}\label{lm:sansnom}
Soit $\pi\,:\,X\longto Y$ un morphisme de $k$-vari\'et\'es affines. 
Soit $K$ une $k$-extension  pseudo-finie de $k$, dont on fixe
une cl\^oture alg\'ebrique. 
Soit $n\geq 1$ un
entier, $\Sym^n \pi\,:\,\Sym^n X\to \Sym^n Y$ le morphisme induit par
$\pi$ et $K_n$ l'unique extension de degr\'e $n$ de $K$.
Soit $y\in \psi_n(K)\subset (\Sym^n Y)(K)$, soit $\wt{y}$ le point ferm\'e
de degr\'e $n$ de $X_K$ associ\'e, 
et soit $\clo{y}$ un \'el\'ement de  $Y(K_n)$
ayant pour image $\wt{y}$. Alors $y$ se rel\`eve via $\Sym^n \pi$ en un
point de $(\Sym^n X)(K)$ 
si et seulement si $\clo{y}$ se rel\`eve via $\pi$ en un point de $X(K_n)$.
\end{lemme}

\begin{nota}
Soit  $G$ un groupe fini. 
On note 
$\irrq{G}$ l'ensemble des classes d'isomorphismes de $\bQ$-repr\'esentations
irr\'eductibles de $G$ et $\bconjc{G}$ l'ensemble des classes de conjugaison de sous-groupes cycliques
de $G$.

Soit $\rho$ une $\bQ$-repr\'esentation de $G$, $\cC$ un \'el\'ement de $\bconjc{G}$ et $g$ un g\'en\'erateur d'un
\'el\'ement de $\cC$. La valeur de $\chi_{\rho}(g)$ ne d\'epend pas
du choix d'un tel $g$ : on la note $\chi_{\rho}(\cC)$.
\end{nota}

Soit $G$ un groupe fini, $X$ une $k$-$G$-vari\'et\'e affine sur $k$ et
$Y\eqdef X/G$.
Soit $\cI$ une classe de conjugaison de sous-groupes de $G$. On note $Y_\cI$
le sous-ensemble contructible de $Y$ form\'e des points ayant un groupe
d'inertie dans $\cI$.
Soit $\cD$ une classe de conjugaison de sous-groupes de $G$ v\'erifiant
la propri\'et\'e suivante~: il existe des
\'el\'ements $I$ de $\cI$ et $D$ de $\cD$ tels que $I\subset D \subset
N_G(I)$ et $D/I$ est cyclique. 

Soit $n\geq 1$ un entier. On note $\Sym^n(Y_\cI)_0$ le sous-ensemble
constructible de $\Sym^n(Y)$ form\'e de l'intersection de $\Sym(Y)^n_0$
avec l'image de $Y_\cI^n$ dans $\Sym^n(Y)$.
On note $\psin{X,G,\cI,\cD,n}$ une formule dont l'interpr\'etation dans
toute $k$-extension pseudo-finie
$K$ est l'ensemble des \'el\'ements de $\Sym^n(Y_\cI)_0(K)$, satisfaisant $\psinx{Y}$, 
qui se rel\`event en un point $K$-rationnel de $\Sym^n(X/D)$ mais pas \`a un point $K$-rationnel 
de $\Sym^n(X/D')$ pour tout sous-groupe strict $D'$ de $D$. 
\begin{rem}\label{rem:phixgidn}
D'apr\`es le lemme \ref{lm:sansnom}, la bijection naturelle entre $\psi_n(K)$
et les points ferm\'es de degr\'e $n$ de $Y_K$
(cf. la remarque \ref{rem:psink}) met en correspondance
les \'el\'ements de $\psin{X,G,\cI,\cD,n}(K)$ et les points ferm\'es de degr\'e $n$ de $Y_K$
ayant dans le $G$-rev\^etement $X_K\to Y_K$ un groupe d'inertie dans $\cI$
et un groupe de d\'ecomposition dans  $\cD$.
\end{rem}
On suppose \`a pr\'esent $k$ de caract\'eristique z\'ero.
L'image de $\psin{X,G,\cI,\cD,n}$ par le morphisme $\chif$ du th\'eor\`eme
\ref{thm:denefloeser} ne d\'epend pas du choix du plongement affine de $X$, on la note
encore $\psin{X,G,\cI,\cD,n}$. On v\'erifie que si $U$ est un ouvert
affine $G$-stable de $X$, on a la relation 
\begin{equation}
\psin{X,G,\cI,\cD,n}=\psin{U,G,\cI,\cD,n}+\psin{X\setminus U,G,\cI,\cD,n}
\end{equation}
ce qui permet de d\'efinir, \`a
$G$, $\cI$, $\cC$ et $n$ fix\'es, un
morphisme de groupes
\begin{equation}
\psin{\,.\,,G,\cI,\cD,n}\,:\,\kogvark\longto \kovchmkq{\bQ}.
\end{equation}
\begin{rem}\label{rem:ex:dim0}
Soit $G$ un groupe fini, $L$ une extension finie de $k$ 
et $\pi\,:\,G\to \Aut_k(\Spec(L))$ un morphisme.
Pour tout $n\geq 1$, on a $\psin{\Spec(L),G,\cI,\cD,n}=0$ si 
$\cI\neq
\{\Ker(\pi)\}.
$
Par ailleurs, d'apr\`es la remarque \ref{rem:psin:dim0}, on a 
$\psin{\Spec(L),G,\{\ker(\pi)\},\cD,n}=0$ 
si $n>[L^G:k]$.
\end{rem}
\begin{prop}\label{prop:spec:phixgcin}
Supposons que $k$ soit un corps de nombres. 
Soit $n\geq 1$. 
Pour presque toute place finie 
$\mfp$, 
le nombre de points modulo $\mfp$ de $\psin{X,G,\cI,\cD,n}$
est \'egal au nombre de points ferm\'es de degr\'e $n$ de $(X/G)_{\mfp}$
ayant
un groupe d'inertie dans $\cI$
et un groupe de d\'ecomposition dans $\cD$.
\end{prop}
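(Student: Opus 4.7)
The plan is to combine Theorem~\ref{thm:denefloeser}(\ref{item:nbre:pts:mod:p}), which yields for almost every finite place $\mfp$ of $k$ the identity
\begin{equation*}
\Trp(\chil(\psin{X,G,\cI,\cD,n})) = \card{\psin{X,G,\cI,\cD,n}(\kappa_{\mfp})},
\end{equation*}
with an extension of the arithmetic interpretation of Remark~\ref{rem:phixgidn} from pseudo-finite extensions of $k$ to the residue fields $\kappa_{\mfp}$. Once this extension is established, the right-hand side will count exactly the closed points of degree $n$ of $(X/G)_{\mfp}$ with inertia group in $\cI$ and decomposition group in $\cD$, proving the proposition.

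To establish this interpretation, I would spread the situation out over $\Spec(\mathcal{O}_{k,S})$ for a suitable finite set $S$ of places: choose integral models of $X$, of $Y = X/G$ and of each quotient $X/D'$ with $D' \leq G$, such that for $\mfp \notin S$ reduction modulo $\mfp$ preserves the $G$-action and its ramification data, the constructible loci $Y_\cI$, as well as the open subschemes $\Sym^n(-)_0$ of the symmetric powers, and such that $X_{\mfp} \to Y_{\mfp}$ remains a $G$-cover over its generic stratum with the same subgroup/quotient correspondences as in characteristic zero.

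For such $\mfp$, the argument used for pseudo-finite fields carries over. Finite fields, like pseudo-finite ones, admit a unique extension of each degree, hence the bijection of Remark~\ref{rem:psink} between $\psinx{Y_{\mfp}}(\kappa_{\mfp})$ and closed points of degree $n$ of $Y_{\mfp}$ remains valid. Applying Lemma~\ref{lm:sansnom} to $X_{\mfp} \to Y_{\mfp}$ over $\kappa_{\mfp}$ then translates the defining lifting condition of $\psin{X,G,\cI,\cD,n}$ into the statement that the $\kappa_{\mfp,n}$-point of $Y_{\mfp}$ corresponding to a given closed point of degree $n$ (where $\kappa_{\mfp,n}$ denotes the degree $n$ extension of $\kappa_{\mfp}$) lifts through $(X/D)_{\mfp}$ but through no $(X/D')_{\mfp}$ for $D'$ a strict subgroup of $D$; by \'etale Galois theory this is equivalent to the decomposition group being conjugate to $D$, i.e.\ belonging to $\cD$, while the restriction to $\Sym^n(Y_\cI)_0$ imposes that the inertia class lie in $\cI$.

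The main obstacle is this second step: $\kappa_{\mfp}$ is not pseudo-finite (not every geometrically integral $\kappa_{\mfp}$-variety admits a rational point), so the interpretation built into the definition of $\psin{X,G,\cI,\cD,n}$ over pseudo-finite extensions of $k$ does not a priori transfer to $\kappa_{\mfp}$. One must verify, via a standard spreading-out argument parallel to the one invoked in Remark~\ref{rem:car:non:nulle:bis} (and ultimately going back to the proof of Lemma~3.3.2 of \cite{DeLo:def_sets_motives}), that after excluding finitely many places the two interpretations agree. The adaptation here is routine but requires keeping simultaneous track of the symmetric product structure and the finer $(\cI, \cD)$-stratification.
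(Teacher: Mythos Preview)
Your proposal is correct and follows essentially the same route as the paper: reduce to the affine case, spread out to get good reduction at almost every $\mfp$, use Lemma~\ref{lm:sansnom} over $\kappa_{\mfp}$ to identify $\psin{X,G,\cI,\cD,n}(\kappa_{\mfp})$ with the desired set of closed points, and conclude via Theorem~\ref{thm:denefloeser}(\ref{item:nbre:pts:mod:p}). Your discussion of the pseudo-finiteness obstacle is a useful clarification that the paper leaves implicit: Lemma~\ref{lm:sansnom}, though stated for pseudo-finite extensions, uses only the existence of a unique degree-$n$ extension and so applies to finite fields; the paper's proof relies on this without comment.
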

\begin{proof}
On peut supposer $X$ affine. Pour presque tout $\mfp$, la $G$-vari\'et\'e
$X$ 
a bonne r\'eduction en $\mfp$.
D'apr\`es le lemme \ref{lm:sansnom} et le point 2. du th\'eor\`eme \ref{thm:denefloeser}
il y a alors une bijection entre les \'el\'ements de
$\psin{X,G,\cI,\cD,n}(\kappa_{\mfp})$ 
et les points ferm\'es de degr\'e $n$ de $Y_{\kappa_{\mfp}}$
ayant dans le $G$-rev\^etement $X_{\kappa_{\mfp}}\to Y_{\kappa_{\mfp}}$ un groupe d'inertie dans $\cI$
et un groupe de d\'ecomposition dans $\cD$. 
On conclut gr\^ace au point \ref{item:nbre:pts:mod:p} du th\'eor\`eme
\ref{thm:denefloeser}.
\end{proof}
\begin{rem}\label{rem:prop:spec:phixgcin}
Nous ignorons s'il est possible de trouver un ensemble fini de places
$S$ de $k$ tel que pour tout $n\geq 1$ et tout $\mfp\notin S$ le nombre de points modulo $\mfp$ de
$\psin{X,G,\cI,\cD,n}$
est \'egal au nombre de points ferm\'es de degr\'e $n$ de $(X/G)_{\mfp}$
ayant
un groupe d'inertie dans $\cI$
et un groupe de d\'ecomposition dans $\cD$
(i.e. s'il existe une
version uniforme en $n$ de la proposition \ref{prop:spec:phixgcin}).
C'est en tout cas vrai si $X$ est de dimension z\'ero (d'apr\`es la
remarque \ref{rem:ex:dim0}) ou de dimension un (cf. la
remarque \ref{rem:nonram:dimun} ci-dessous).
\end{rem}
\begin{nota}\label{nota:bconjcgI}
Si $G$ est un groupe fini, 
on note $\bconj{G}$ l'ensemble des classes de
conjugaison de sous-groupes de $G$. 
Si $\cI$ est un \'el\'ement de $\bconj{G}$,
on note $\bconjc{G,\cI}$ l'ensemble des classes de
conjugaison $\cC$ de $G$ v\'erifiant la propri\'et\'e suivante : il existe des
\'el\'ements $I$ de $\cI$ et $C$ de $\cC$ tels que $I\subset C \subset
N_G(I)$ et $C/I$ est cyclique.
\end{nota}
\begin{rem}
Soit $X$ une $G$-vari\'et\'e affine.
Pour $n\geq 1$ donn\'e, 
d'apr\`es les remarques \ref{rem:psink}
et \ref{rem:phixgidn}
il est imm\'ediat que les
formules 
\begin{equation}
\left(\psin{X,G,\cI,\cD,n}\right)_{\cI\in \bconj{G},\cD\in \bconjc{G,\cI}}
\end{equation}
forment une partition de la formule $\psi_n(X/G)$, i.e. on a pour
toute $k$-extension pseudo-finie $K$
\begin{equation}
\psi_n(X/G)(K)=\disju{
\substack{
\cI\in \bconj{G},\\
\cD\in \bconjc{G,\cI}}
}
\psin{X,G,\cI,\cD,n}(K).
\end{equation}
En particulier, pour toute $G$-vari\'et\'e quasi-projective $X$, 
on a dans 
$\kpff$
la relation
\begin{equation}\label{eq:rel:psin:phixgicn}
\psinx{X/G}
=
\sum_{
\substack{
\cI\in \bconj{G},\\
\cD\in \bconjc{G,\cI}}
}
\psin{X,G,\cI,\cD,n}.
\end{equation}
\end{rem}
Nous introduisons \`a pr\'esent une construction qui nous sera utile pour la d\'emonstration de la
compatibilit\'e \`a l'induction de la fonction $L$ d'Artin motivique
(cf. lemme \ref{lm:compat:ind}).
\begin{nota}\label{nota:fICJD}
Soit $H$ un sous-groupe d'un groupe fini $G$.
Soit $\cJ\in \bconj{H}$, $\cD\in \bconjc{H,\cJ}$,
$\cI\in \bconj{G}$ et $\cC\in \bconjc{G,\cI}$.
On \'ecrit $(\cJ,\cD)\subset (\cI,\cC)\cap H$ si on a la propri\'et\'e : il
existe un \'el\'ement $(I,C)$ de $\cI\times \cC$ tel que $I$ est distingu\'e
dans $C$, $C/I$ est cyclique, $C\cap H\in \cD$ et $I\cap H \in \cJ$.
On note alors
\begin{equation}
\fICJD\eqdef \frac{\card{C}\,\card{I\cap H}}{\card{C\cap H}\card{I}}
\end{equation}
et 
\begin{equation}
\dICJD
\eqdef
\card{\{g\in C\backslash G/H,\quad g\,C\,g^{-1}\cap H\in \cD\text{ et }g\,I\,g^{-1}\cap H\in \cJ\}}.
\end{equation}
\end{nota}
\begin{rem}
Soit $X$ une $k$-$G$-vari\'et\'e quasi-projective.
Soit $K$ une $k$-extension pseudo-finie et $y$ un point ferm\'e de
$(X/H)_K$, 
ayant
d\'ecomposition $\cD$ et inertie $\cJ$. On note $\cC$ (respectivement
$\cI$) les groupes de d\'ecomposition (respectivement d'inertie) de
l'image $x$ de $y$ dans $(X/G)_K$. Alors le degr\'e de l'extension de corps
r\'esiduel $\kappa_x\to \kappa_y$ est $\fICJD$, et il y a exactement
$\dICJD$ points ferm\'es de $(X/H)_K$ ayant d\'ecomposition $\cD$, inertie
$\cJ$, et image $x$.
\end{rem}
Soit $n\geq 1$ et $d$ un diviseur de $n$. Pour toute $k$-vari\'et\'e
quasi-projective $X$, on note $\pi_{(d,n)}\,:\,\Sym^d X \to \Sym^n X$ le morphisme qui envoie $\{x_1,\dots,x_d\}$
sur $\{\underbrace{x_1,\dots,x_1}_{\frac{n}{d}\text{ fois}},x_2,\dots,x_2,\dots,x_d,\dots,x_d\}$.
Consid\'erons \`a pr\'esent une $k$-$G$-vari\'et\'e affine $X$ et $H$ un
sous-groupe de $G$. Soit $p$ le morphisme naturel $X/H\to X/G$.
Soit $\cJ\in \bconj{H}$, $\cD\in \bconjc{H,\cJ}$,
$\cI\in \bconj{G}$ et $\cC\in \bconjc{G,\cI}$
tels que $(\cJ,\cD)\subset (\cI,\cC)\cap H$.
Pour $n\geq 1$,
on note $\psi^{\cI,\cC}_{_{X,H,\cJ,\cD,n}}$ une formule sur $\Sym^n(X/H)$
dont l'interpr\'etation dans toute $k$-extension pseudo-finie $K$ est
l'ensemble des \'el\'ements de $\Sym^n(X/H)(K)$  
\begin{enumerate}
\item
qui sont dans $\psin{X,H,\cJ,\cD,n}(K)$ ;
\item
dont l'image par $\Sym^n p$ dans $\Sym^n(X/G)(K)$
est l'image par 
$\pi_{n/\fICJD,n}$ 
d'un \'el\'ement de 
$\Sym^{n/\fICJD}(X/G)(K)$
qui est dans 
$\psin{X,G,\cI,\cC,n/\fICJD}(K)$.
\end{enumerate}
L'identification canonique de $\psin{X,H,\cJ,\cD,n}(K)$ 
\`a $[(X/H)_K]^{(0)}_{_{H,\cJ,\cD,n}}$ met donc en bijection 
$\psi^{\cI,\cC}_{_{X,H,\{e\},\cD,n}}(K)$ avec l'ensemble des \'el\'ements 
de $[(X/H)_K]^{(0)}_{_{H,\cJ,\cD,n}}$  dont l'image par $p$ est dans  
$[(X/G)_K]^{(0)}_{_{H,\cI,\cC,d}}$.
En particulier, les ensembles $\psi^{\cI,\cC}_{_{X,H,\cJ,\cD,n}}(K)$, o\`u
$(\cC,\cI)$ d\'ecrit l'ensemble des \'el\'ements de $\bconj{G}^2$ tels que
$\cC\in \bconjc{G,\cI}$ et $(\cJ,\cD)\subset (\cI,\cC)\cap H$, forment une partition de
$\psin{X,H,\cJ,\cD,n}(K)$.
On en d\'eduit \'egalement que $p$ induit une application $\dICJD$ pour $1$
de $\psin{X,H,\cJ,\cD,n}(K)$ sur son image par $\Sym^np$, laquelle est en
bijection avec $\psin{X,G,\cI,\cC,\frac{n}{\fICJD}}(K)$.

Supposons \`a pr\'esent $k$ de caract\'eristique z\'ero.
L'image de $\psi^{\cI,\cC}_{_{X,H,\cJ,\cD,n}}$ par le morphisme $\chif$ du th\'eor\`eme
\ref{thm:denefloeser} ne d\'epend pas du choix du plongement affine de $X$, on la note
encore $\psi^{\cI,\cC}_{_{X,H,\cJ,\cD,n}}$. En outre
$\psi^{\cI,\cC}_{_{X,H,\cJ,\cD,n}}$ 
est compatible \`a la
d\'ecomposition d'une $G$-vari\'et\'e affine en une sous-$G$-vari\'et\'e affine
ouverte et son compl\'ementaire. 
On d\'efinit ainsi, \`a 
$G$, $H$, $\cJ$, $\cD$, $\cI$, $\cC$ et $n$ fix\'es, un
morphisme de groupes
\begin{equation}
\psi^{\cI,\cC}_{_{\,.\,,H,\cJ,\cD,n}}\,:\,\kogvark\longto \kovchmk{\bQ}_{\bQ}
\end{equation}
v\'erfiant, d'apr\`es ce qui pr\'ec\`ede, les deux relations suivantes.
\begin{lemme}\label{lm:psi:eq:sum:psi:c}
\begin{equation}
\psin{X,H,\cJ,\cD,n}
=
\sum_{
\substack{
(\cI,\cC)\in \bconj{G}^2 
\\
\cC\in \bconjc{G,I}
\\
(\cJ,\cD)\subset (\cI,\cC)\cap H
}
}
\psi^{\cI,\cC}_{_{X,H,\cJ,\cD,n}}
\end{equation}
\end{lemme}
\begin{lemme}\label{lm:psiC} 
\begin{equation}
\psi^{\cI,\cC}_{_{X,H,\cJ,\cD,n}}=\dICJD\,\,\psin{X,G,\cI,\cC,n/\fICJD}.
\end{equation}
\end{lemme}

\subsection{D\'efinition via  le produit eul\'erien motivique}

\begin{nota}
Soit $G$ un groupe fini et $\rho$ une $\bQ$-repr\'esentation de $G$.
Soit $\cI\in \bconj{G}$ et $\cD\in \bconjc{G,\cI}$.
Soient $I$ et $D$ des \'el\'ements de $\cI$ et $\cD$ respectivement tels
qu'on ait $I\subset D \subset N_G(I)$ et $D/I$ cyclique. 
Soit $g$ un \'el\'ement de $D$ dont l'image engendre $D/I$.
L'\'el\'ement de $\bQ[t]$
$
\det(\Id-t\,\rho(g)\,| V^I)\in \bQ[t]
$
ne d\'epend pas des choix de $I$, $D$ et (comme $\rho$ est d\'efinie sur $\bQ$) $g$~;
on le note
$
\polcar_{\rho,\cI,\cD}(t).
$
\end{nota}

Rappelons la d\'efinition de la fonction $L$ d'Artin
classique comme produit eulerien. C'est sur cette d\'efinition qu'est
model\'ee notre d\'efinition 
de la fonction $L$ d'Artin
motivique comme produit eulerien motivique.

Soit $k$ un corps fini, $X$ une $k$-$G$-vari\'et\'e quasi-projective, 
$E$ un corps et $\rho$ une $E$-repr\'esentation de $G$.
Soit $(X/G)^{(0)}$ l'ensemble des points ferm\'es de $X/G$. Pour $y\in
(X/G)^{(0)}$,  soit $D_y$ un groupe de d\'ecomposition au-dessus de $y$,
$I_y$ le groupe d'inertie correspondant, et $\Fr_y$ une
pr\'eimage dans $C_y$ du Frobenius correspondant.
Le facteur local de la fonction
$L$
d'Artin en $y$ s'\'ecrit alors
\begin{equation}\label{eq:ly}
L_y(X,G,\rho,t)\eqdef\det(\Id-t^{\deg(y)}\,\rho(\Fr_y)\,|\,V^{I_y})^{-1}
\end{equation}
La fonction $L$ d'Artin associ\'ee aux donn\'ees pr\'ec\'edentes 
est l'\'el\'ement de $1+E[[t]]^+$ d\'efini par 
\begin{equation}
\Lar(X,G,\rho,t)\eqdef \prod_{y\in (X/G)^{(0)}}L_y(X,G,\rho,t).
\end{equation}

Supposons \`a pr\'esent que $E=\bQ$.
Pour $y\in (X/G)^{(0)}$,
on note $\cI_y$ (respectivement $\cD_y$) l'ensemble des groupes
d'inertie (respectivement de d\'ecomposition)
au-dessus de $y$. La d\'efinition \eqref{eq:ly} se r\'e\'ecrit alors
\begin{equation}
L_y(X,G,\rho,t)=\polcar_{\rho,\cI_y,\cD_y}(t^{\deg(y)})^{-1}.
\end{equation}
\begin{nota}
Soit $X$ une $k$-$G$-vari\'et\'e quasi-projective. Pour tout $n\geq 1$, tout $\cI\in \bconj{G}$ et $\cD\in
\bconjc{G,\cI}$, on note $X^{(0)}_{_{G,\cI,\cD,n}}$ l'ensemble des points 
ferm\'es de $X/G$ de degr\'e $n$ ayant inertie $\cI$ et d\'ecomposition $\cD$.
\end{nota}
On a donc 
la relation
\begin{equation}\label{eq:lm:ecr:L:class}
\Lar(X,G,\rho,t)
=
\prod_{n\geq 1} \,
\prod_{
\substack{
\cI\in \bconj{G} \,
\\
\cD\in \bconjc{G,\cI}
}
}
\polcar_{\rho,\cI,\cD}(t^n)^{-\card{X^{(0)}_{_{G,\cI,\cD,n}}}}.
\end{equation}

\begin{nota}
Si $A$ est une $\bQ$-alg\`ebre, $P$ un \'el\'ement de $1+A[[t]]^+$ et $a$
un \'el\'ement de $A$
on pose
\begin{align}
P(t)^a&\eqdef 1+\sum_{n\geq 1}
\frac{a.(a-1)\dots(a-n+1)}{n!} (P(t)-1)^n
\\
&
=
\exp\left(a\,\log[P(t)]\right).
\end{align}
\end{nota}

\begin{defi}
Soit $k$ un corps de caract\'eristique z\'ero, $G$ un groupe
fini, $\rho$ une $\bQ$-repr\'esentation  de $G$, 
$X$ une $k$-$G$-vari\'et\'e quasi-projective et $Y=X/G$.
Motiv\'e par la relation \eqref{eq:lm:ecr:L:class}, 
on d\'efinit la \termin{fonction $L$ d'Artin motivique} associ\'ee \`a ces donn\'ees
comme \'etant l'\'el\'ement de $1+\kovchmkq{\bQ}[[t]]^+$
d\'efini par
\begin{equation}\label{eq:def:lm}
\Lm(X,G,\rho,t)
\eqdef 
\prod_{n\geq 1}\,\,
\prod_{\cI\in \bconj{G}}\, \,
\prod_{\cD\in \bconjc{G,\cI}}\,\,
\polcar_{\rho,\cI,\cD}(t^n)^{-\psin{X,G,\cI,\cD,n}}
\end{equation}
\end{defi}
\subsection{Propri\'et\'es}
Dans toute cette partie, le corps $k$ est suppos\'e de caract\'eristique z\'ero.
\subsubsection{Lien avec la fonction z\^eta motivique}
Dans le cas d'un corps de base fini, si $\rho$ est triviale,
$\Lar(X,G,\rho,t)$ est la fonction z\^eta de Hasse-Weil de $X/G$.
On a un r\'esultat similaire pour la fonction $L$ motivique.
\begin{lemme}\label{lm:ltriv:zm}
Soit $X$ une $k$-$G$-vari\'et\'e quasi-projective.
On a 
\begin{equation}
\Lm(X,G,\triv,t)=\Zm(X/G,t)
\end{equation} 
\end{lemme}
\begin{proof}
D'apr\`es la relation \eqref{eq:rel:psin:phixgicn}
et la d\'efinition de $\Lm(X,G,\triv,t)$, on a
\begin{equation}\label{eq:lmtriv}
\Lm(X,G,\triv,t)=\prod_{n\geq 1} \left(1-t^n\right)^{\,-\psi_n(X/G)}
\end{equation}
D'apr\`es la proposition 2.17 de \cite{Bou:prod:eul:mot}, le membre de droite de \eqref{eq:lmtriv}
coïncide avec $\Zm(X/G,t)$.
\end{proof}
\subsubsection{Somme directe, quotient, induction et restriction}
Nous \'enon\c cons et d\'emontrons \`a pr\'esent quelques propri\'et\'es \'el\'ementaires
des fonctions $L$ d'Artin motiviques, pendant naturel de propri\'et\'es
des fonctions $L$ d'Artin classiques : compatibilit\'e \`a la
somme directe, au quotient, \`a l'induction et \`a la restriction des
repr\'esentations. La compatibilit\'e \`a la somme directe et \`a l'induction nous  permettra de
montrer que notre fonction co\"\i ncide avec la fonction d\'efinie par Dhillon
et Minac (qui v\'erifie les propri\'et\'es analogues), et dans le cas d'un corps de nombres se sp\'ecialise sur la
fonction d'Artin classique en presque toute place.
\begin{lemme}\label{lm:compat:sommedir}
(Compatibilit\'e \`a la somme directe)
Si $\rho=\rho_1\oplus \rho_2$,
alors
\begin{equation}
\Lm(X,G,\rho,t)=\Lm(X,G,\rho_1,t)\,\Lm(X,G,\rho_2,t).
\end{equation}
\end{lemme}
\begin{proof}
On a en effet $\polcar_{\rho_1\oplus\rho_2}=\polcar_{\rho_1}\polcar_{\rho_2}$.
\end{proof}
\begin{lemme}\label{lm:compat:quot}
(Compatibilit\'e au quotient) 
Soit $H$ un sous-groupe distingu\'e de $G$, $\pi\,:\,G\to G/H$
le morphisme quotient, $\rho$ une $\bQ$-repr\'esentation de
$G/H$ et $X$ une $G$-vari\'et\'e quasi-projective. 
Alors on a 
\begin{equation}
\Lm(X/H,G/H,\rho,t)=\Lm(X,G,\rho\circ \pi,t),
\end{equation}
plus pr\'ecis\'ement on a pour tout $n\geq 1$
\begin{equation}
\prod_{\substack{
\cI\in \bconj{G}\\
\cD\in \bconjc{G,\cI}
}}
\!\!
\polcar_{\rho\circ \pi,\cI,\cD}(t^n)^{-\psin{X,G,\cI,\cD,n}}
=
\prod_{
\substack{
\cI\in \bconj{G/H}\\
\cD\in \bconjc{G/H,\cI}}} 
\!\!
\polcar_{\rho,\cI,\cD}(t^n)^{-\psin{X/H,G/H,\cI,\cD,n}}.
\end{equation}
\end{lemme}
\begin{lemme}\label{lm:compat:res}
(Compatibilit\'e \`a la restriction)
Soit $H$ un sous-groupe de $G$,  $\rho$ une $\bQ$-repr\'esentation de
$G$ et $X$ une $H$-vari\'et\'e. Soit $Y$ la $G$-vari\'et\'e form\'ee de l'union
disjointe de $G/H$-copies de $X$. 
Alors
\begin{equation}
\Lm(X,H,\rho_{|_{H}},t)=\Lm(Y,G,\rho,t),
\end{equation}
plus pr\'ecis\'ement on a pour tout $n\geq 1$
\begin{equation}
\prod_{\substack{
\cI\in \bconj{H}
\\
\cD\in \bconjc{H,\cI}
}}
\polcar_{\rho_{|_{H}},\cI,\cD}(t^n)^{-\psin{X,H,\cI,\cD,n}}
=
\prod_{\substack{
\cI\in \bconj{G}
\\
\cD\in \bconjc{G,\cI}}} 
\polcar_{\rho,\cI,\cD}(t^n)^{-\psin{Y,G,\cI,\cD,n}}.
\end{equation}
\end{lemme}
\begin{lemme}\label{lm:compat:ind}
(Compatibilit\'e \`a l'induction)
Soit $H$ un sous-groupe de $G$, $\rho$ une $\bQ$-repr\'esentation  de $H$
et $X$ une $G$-vari\'et\'e.
Alors
\begin{equation}
\Lm(X,H,\rho,t)=\Lm(X,G,\Ind_{\,H}^{\,G}\rho,t).
\end{equation}
\end{lemme}
\begin{proof}
Le principe de la d\'emonstration de ces trois r\'esultats 
est le suivant : on montre que ces relations sont <<v\'erifi\'ees>> pour
toutes $k$-extensions pseudo-finies, en <<copiant>> 
la d\'emonstration des propri\'et\'es analogues des fonctions $L$ d'Artin
classiques, puis on applique le th\'eor\`eme de Denef et Loeser.

Nous nous contentons de donner la d\'emonstration du lemme
\ref{lm:compat:ind}, 
et nous supposerons pour simplifier le rev\^etement $X\to X/G$ non ramifi\'e,
la preuve dans le cas ramifi\'e  \'etant analogue.
Rappelons tout d'abord que si
$\rho$ est une repr\'esentation d'un sous-groupe $D$ d'indice $f$ d'un
groupe cyclique $C$ on a 
\begin{equation}\label{eq:polcar:ind}
\polcar_{\Ind_{D}^C\rho,\{e\},C}(t)=\polcar_{\rho,\{e\},D}\left(t^f\right).
\end{equation}

Soit $\cC$ un \'el\'ement de $\bconjc{G}$, et $C$ un \'el\'ement de
$\cC$. Soient $\cC_1,\dots,\cC_r$ les $H$-classes de conjugaison de
l'ensemble $\{C\cap H,\quad C\in \cC\}$. Pour $i=1,\dots,r$, soit
\begin{equation}
\left(C\backslash G/H\right)_i\eqdef \{g\in C\backslash G/H,\quad
g\,C\,g^{-1}\,\cap\,H\in \cC_i\}
\end{equation}
Le cardinal de cet ensemble est donc (cf. notations \ref{nota:fICJD})
$d^{\{e\},\cC}_{\{e\},\cC_i}$, on le note ici $d_i$.
On note \'egalement $f_i$ l'entier $f^{\{e\},\cC}_{\{e\},\cC_i}$.

D'apr\`es \cite[\S 7.4,Proposition 15]{Ser:rlgf}, on a 
\begin{equation}
\polcar_{\Ind_{\,H}^{\,G}\rho,\{e\},\cC}(t)
=
\prod_{g\in C\backslash
  G/H}\polcar_{\Ind^{\,g\,C\,g^{-1}}_{\,g\,C\,g^{-1}\cap H}\rho,\{e\},g\,C\,g^{-1}}(t)
\end{equation}
soit, d'apr\`es la relation \eqref{eq:polcar:ind}
\begin{equation}
\polcar_{\Ind_{\,H}^{\,G}\rho,\{e\},\cC}(t)
=
\prod_{1\leq i\leq r}\polcar_{\rho,\{e\},\cC_i}\left(t^{f_i}\right)^{d_i}
\end{equation}
Appliquant le lemme \ref{lm:psiC} on obtient
\begin{align}
\polcar_{\Ind_{\,H}^{\,G}\rho,\{e\},\cC}(t^n)^{\,-\psin{X,G,\{e\},\cC,n}}
&
=
\prod_{1\leq i\leq r}\polcar_{\rho,\{e\},\cC_i}\left(t^{\,nf_i}\right)^{\,-\psi^{\{e\},\cC}_{_{X,H,\{e\},\cC_i,nf_i}}}.
\end{align}
On en d\'eduit
\begin{align}
\prod_{n\geq 1} \prod_{\cC\in \bconjc{G}} \polcar_{\Ind_{\,H}^{\,G}\rho,\{e\},\cC}(t^n)^{\,-\psin{X,G,\{e\},\cC,n}}
\hskip-0.3\textwidth&
\\
&
=
\prod_{n\geq 1} 
\,
\prod_{f\geq 1}
\,
\prod_{\cD\in \bconjc{H}}
\polcar_{\rho,\{e\},\cD}
\left(t^{n\,f}\right)^{\,-
\sumu{
\substack{\cD\subset \cC\cap H\\ f^{\cC}_{\cD}=f}
}
\psi^{\{e\},\cC}_{_{X,H,\{e\},\cD,nf}}}
\\
&
=
\prod_{n\geq 1} 
\,
\prod_{\cD\in \bconjc{H}}
\polcar_{\rho,\{e\},\cD}
\left(t^{n}\right)^{\,
-
\sumu{f|n}
\,\,\,
\sumu{
\substack{\cD\subset \cC\cap H\\f^{\cC}_{\cD}=f}
}
\psi^{\{e\},\cC}_{_{X,H,\{e\},\cD,n}}}
\\
&
=
\prod_{n\geq 1} 
\,
\prod_{\cD\in \bconjc{H}}
\,
\polcar_{\rho,\{e\},\cD}
\left(t^{n}\right)^{\,
-
\sumu{\cD\subset \cC\cap H}
\psi^{\{e\},\cC}_{_{X,H,\{e\},\cD,n}}}
\\
\text{(lemme \ref{lm:psi:eq:sum:psi:c})}
&
=
\prod_{n\geq 1} 
\,
\prod_{\cD\in \bconjc{H}}
\polcar_{\rho,\{e\},\cD}
\left(t^{n}\right)^{\,-
\psin{X,H,\{e\},\cD,n}}
\end{align}
d'o\`u le r\'esultat.

\end{proof}
\begin{cor}\label{cor:lmdm:lm}
\begin{enumerate}
\item
La fonction $\Lm(X,G,\rho,t)$ co\"\i ncide avec l'image dans
$1+\kochmk{\bQ}_{\bQ}[[t]]^+$ de la fonction 
$\LmDM(X,G,\rho,t)$.
\item
Supposons que $k$ soit un corps de nombres.
Soit $\ell$ un nombre premier. Alors pour presque toute place
$\mfp$ de $k$, 
on a la relation
\begin{equation}\label{eq:trp:lm:lar}
\Trp(\chil\left(\Lm(X,G,\rho,t)\right))=\Lar(X_{\mfp},G,\rho,t).
\end{equation}
\end{enumerate}
\end{cor}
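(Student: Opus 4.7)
The plan is to prove (1) via Artin's induction theorem, reducing to the case of representations induced from the trivial character of a cyclic subgroup; part (2) then follows from the same reduction combined with Corollary \ref{cor:delbano}.

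For (1), the key inputs are (a) the compatibilities of $\Lm$ with direct sum and induction established in Lemmas \ref{lm:compat:sommedir} and \ref{lm:compat:ind}; (b) the analogous compatibilities for $\LmDM$, which must be verified: compatibility with direct sum follows from the additivity of the $G$-invariants projector together with the K\"unneth-type identity $\Sym^n(N_1 \oplus N_2) \cong \bigoplus_{i+j=n} \Sym^i N_1 \otimes \Sym^j N_2$, which makes $\Zm$ convert direct sums of motives into products, while compatibility with induction is the standard Frobenius reciprocity $(M \otimes V_{\Ind_H^G\rho})^G \cong (M|_H \otimes V_\rho)^H$ transported to the pseudo-abelian $\bQ$-linear tensor setting of Chow motives, with the opposite-group conventions used to define $\LmDM$ on varieties tracked carefully; (c) the identities $\Lm(X,H,\triv,t) = \Zm(X/H,t) = \LmDM(X,H,\triv,t)$ supplied by Lemmas \ref{lm:ltriv:zm} and \ref{lm:lmdmtriv}. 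Both assignments $\rho \mapsto \Lm(X,G,\rho,t)$ and $\rho \mapsto \LmDM(X,G,\rho,t)$ extend to group homomorphisms from the Grothendieck group of rational representations of $G$ into $1+\kochmkq{\bQ}[[t]]^+$, and then $\bQ$-linearly, since the target is uniquely divisible via $P^a \eqdef \exp(a\log P)$. By Artin's induction theorem, the family $\{\Ind_H^G \triv : H \text{ cyclique}\}$ generates this $\bQ$-vector space; at such a generator, induction compatibility reduces the claimed equality to the case $\rho = \triv$ on a cyclic subgroup, which is (c).

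For (2), the classical $\Lar$ is well-known to be compatible with direct sum and induction, and $\Trp \circ \chil$ commutes with these operations and with the formation of rational powers $P^a$ when $P \in 1 + \bQ[t]^+$ (since $\chil$ and $\Trp$ are ring homomorphisms and the exponential/logarithm are natural). The same Artin reduction brings us to the case $\rho = \triv$ on a cyclic subgroup $H$, where the required identity becomes $\Trp(\chil(\Zm(X/H,t))) = \ZHW((X/H)_\mfp, t) = \Lar(X_\mfp, H, \triv, t)$, which is precisely Corollary \ref{cor:delbano}. Since the Artin decomposition of a fixed $\rho$ involves only finitely many cyclic subgroups of $G$, the union of the corresponding bad-place sets remains finite.

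The principal technical obstacle is the clean formulation of Frobenius reciprocity for $\LmDM$ with careful bookkeeping of the opposite-group conventions in its definition on $G$-varieties. A pleasant feature of the Artin-reduction route is that it sidesteps the unknown uniform-in-$n$ version of Proposition \ref{prop:spec:phixgcin} raised in Remark \ref{rem:prop:spec:phixgcin}, which would otherwise be required for a direct term-by-term comparison of the Euler product \eqref{eq:def:lm} with the classical Euler product \eqref{eq:lm:ecr:L:class}.
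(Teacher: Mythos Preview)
Your proposal is correct and follows essentially the same route as the paper: Artin's induction theorem reduces both (1) and (2) to the case $\rho=\triv$ on a cyclic subgroup, where Lemmas \ref{lm:ltriv:zm}, \ref{lm:lmdmtriv} and Corollary \ref{cor:delbano} apply. The only difference is that the paper does not reprove the direct-sum and induction compatibilities of $\LmDM$ (the ``principal technical obstacle'' you flag) but simply cites them as Propositions~2.7 and~2.10 of \cite{DhMi:motivic_chebotarev}; with that citation in hand, your argument and the paper's are the same.
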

\begin{proof}
D'apr\`es le th\'eor\`eme d'Artin (\cite[II-45, proposition 25]{Ser:rlgf}), 
il existe un entier positif $d$
tel qu'on puisse \'ecrire
\begin{equation}
d\,\chi_{\rho}=\sum_{\substack{C}}n_{C}\,\chi_{\Ind_{C}^G(\triv_C)}
\end{equation}
o\`u la somme porte sur des sous-groupes cycliques $C$ de $G$ et les
$n_{C}$ sont des entiers.
D'apr\`es les lemmes \ref{lm:ltriv:zm}, \ref{lm:compat:sommedir} et \ref{lm:compat:ind}, on a donc
\begin{equation}\label{eq:rel:lm:prod:c}
\Lm(X,G,\rho,t)^{d}
=
\prod_{\substack{C<G\\C\text{ cyclique}}}\Lm(X,C,\triv_C,t)^{n_{C}}
=
\prod_{\substack{C<G\\C\text{ cyclique}}}\Zm(X/C,t)^{n_{C}}
\end{equation}
De m\^eme, d'apr\`es les propositions 2.7 et 2.10  de
\cite{DhMi:motivic_chebotarev} et le lemme \ref{lm:lmdmtriv}, on a
\begin{equation}
\LmDM(X,G,\rho,t)^{d}
=
\prod_{\substack{C<G\\C\text{ cyclique}}}\Zm(X/C,t)^{n_{C}}.
\end{equation}
Ainsi  $\Lm(X,G,\rho,t)$ et $\LmDM(X,G,\rho,t)$ sont deux \'el\'ements de
$1+\kochmkq{\bQ}[[t]]^+$ dont une puissance co\" \i ncide. 
Ils sont donc \'egaux.

Pour montrer le deuxi\`eme point, on remarque que la relation 
\eqref{eq:rel:lm:prod:c} entra\^\i ne
\begin{equation}\label{eq:rel:lm:prod:c:chil}
\chil\left(\Lm(X,G,\rho,t)\right)^{d}
=
\prod_{\substack{C<G\\C\text{ cyclique}}}\chil\left(\Zm(X/C,t))\right)^{n_{C}}.
\end{equation}
D'apr\`es le corollaire \ref{cor:delbano}, 
il existe un ensemble fini $S$ de places tel que pour tout $\mfp\notin S$ 
et pour tout $C$
on a la relation 
\begin{equation}
\Trp(\chil\left(\Zm(X/C,t))\right))=\ZHW((X/C)_{\mfp},t).
\end{equation}
D'apr\`es la relation \eqref{eq:rel:lm:prod:c:chil}
on en d\'eduit
que pour presque tout $\mfp$
on a 
\begin{equation}
\Trp(\chil\left(\Lm(X,G,\rho,t)\right))^{d}
=
\prod_{\substack{C<G\\C\text{ cyclique}}}\ZHW((X/C)_{\mfp},t)^{n_{C}}.
\end{equation}
Comme les fonctions $L$ d'Artin classiques sont \'egalement compatibles \`a la
somme directe, la restriction et l'induction, on en d\'eduit le r\'esultat.

\end{proof}
\begin{rem}
Soit $k$ un corps fini et
$
\#_k\,:\,\kochmk{\bQ}\to \bC
$
le morphisme \og nombre de $k$-points\fg~obtenu en consid\'erant la trace du
Frobenius sur une r\'ealisation $\ell$-adique. 
On a en particulier, pour toute $k$-vari\'et\'e
projective et lisse,  $\#_k \Zm(X,t)=\ZHW(X,t)$.
En fait, un raisonnement analoge \`a celui de \cite[\S 5.2]{DhMi:motivic_chebotarev}
montre en outre que si $G$ est un groupe fini agissant sur $X$, on a $\#_k \Zm(h(X)^G,t)=\ZHW(X/G,t)$.
Comme dans la d\'emonstration pr\'ec\'edente, le th\'eor\`eme d'Artin
permet alors de montrer qu'on a
pour toute $k$-vari\'et\'e $X$ projective et lisse et toute
$\bQ$-repr\'esentation $\rho$ d'un groupe fini $G$
la relation
$
\Lar(X,G,\rho,t)=\#_k \LmDM(X,G,\rho,t)$.
Une relation analogue est montr\'ee dans 
\cite{DhMi:motivic_chebotarev} dans le cas de repr\'esentations 
d\'efinies sur un corps contenant toutes les racines de l'unit\'e.
\end{rem}
\begin{rem}
La relation \eqref{eq:trp:lm:lar}
pourrait aussi \^etre vue comme d\'ecoulant de la comparaison des expressions
\eqref{eq:lm:ecr:L:class} et \eqref{eq:def:lm} et du fait que pour
tout $n\geq 1$ et presque tout $\mfp$ le
nombre de points modulo $\mfp$ de $\psin{X,G,\cI,\cD,n}$ est \'egal
au cardinal de l'ensemble
$((X/G)_{\mfp})^{(0)}_{n,\cI,\cD}$ (proposition \ref{prop:spec:phixgcin}).
Cependant, pour rendre l'argument rigoureux, il faudrait 
 disposer d'une version uniforme en $n$ de la proposition
 \ref{prop:spec:phixgcin} 
(cf. la remarque \ref{rem:prop:spec:phixgcin}).
\end{rem}
\begin{cor}
Si $X$ est de dimension au plus $1$, $\Lm(X,G,\rho,t)$ est rationnelle. 
\end{cor}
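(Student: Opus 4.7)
D'après le corollaire \ref{cor:lmdm:lm}, $\Lm(X,G,\rho,t)$ coïncide avec $\LmDM(X,G,\rho,t)$; il suffit donc d'établir la rationalité de cette dernière. Comme $\LmDM(\,.\,,G,\rho,t)$ est un morphisme de groupes de $\kogvark$ vers $1+\kochmkq{\bQ}[[t]]^+$ et que le sous-ensemble des séries rationnelles est stable par produits et inverses, il suffit de prouver la rationalité pour chaque $G$-variété intervenant dans une décomposition additive de $[X]$ dans $\kogvark$.

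Le corps $k$ étant de caractéristique zéro, la complétion et la normalisation $G$-équivariantes (classiques pour les courbes) donnent une identité $[X]=[\wt{X}]-[Z]$ dans $\kogvark$, avec $\wt{X}$ une $G$-variété projective et lisse de dimension au plus un, et $Z$ une $G$-variété de dimension zéro. En décomposant $\wt{X}$ suivant ses orbites de composantes connexes et en appliquant le lemme \ref{lm:compat:res} (compatibilité à la restriction), on se ramène à établir la rationalité de $\LmDM(Y,H,\rho|_H,t)$ dans deux situations: soit $Y$ est une $H$-variété de dimension zéro, soit $Y$ est une $H$-courbe irréductible, projective et lisse (pour un certain sous-groupe $H$ de $G$). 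Cette réduction géométrique équivariante est le point le moins automatique du plan, mais elle reste standard en caractéristique zéro.

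Dans le premier cas, $h(Y)$ est un motif d'Artin et la proposition \ref{prop:lmdmdimzero} donne $\LmDM(Y,H,\rho|_H,t)^{-1}\in 1+\koAMk{\bQ}[t]^+$, ce qui entraîne la rationalité. Dans le second cas, le point \ref{item:new:prop:lmdmcourbe} de la proposition \ref{prop:lmdmcourbe} fournit la relation
\begin{equation*}
\Zm(M,t)^{-1}\,\Zm(M,\bL\,t)^{-1}\,\LmDM(Y,H,\rho|_H,t) \,\in\, 1+\kochmkq{\bQ}[t]^+,
\end{equation*}
où $M\eqdef(h^0(Y)\otimes V_{(\rho|_H)^{\text{op}}})^{H^{\text{op}}}$. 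Comme $h^0(Y)\isom h(\Spec(k'))$ est un motif d'Artin ($k'$ désignant le corps des constantes de $Y$), son facteur direct $M$ l'est également, et la proposition \ref{prop:lmdmdimzero} assure alors que $\Zm(M,t)^{-1}$ et $\Zm(M,\bL\,t)^{-1}$ sont des polynômes, d'où la rationalité de $\LmDM(Y,H,\rho|_H,t)$.
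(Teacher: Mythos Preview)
Your proof is correct and follows the same approach as the paper's own proof, which simply cites the proposition \ref{prop:lmdmcourbe} and the corollaire \ref{cor:lmdm:lm}. You have carefully spelled out the reduction that the paper leaves implicit: passing from an arbitrary quasi-projective $G$-variety of dimension at most one to the building blocks actually covered by those two results, namely irreducible projective smooth curves with group action (handled by item~\ref{item:new:prop:lmdmcourbe} of proposition~\ref{prop:lmdmcourbe}) and zero-dimensional $G$-varieties (handled by proposition~\ref{prop:lmdmdimzero}). One very minor remark: your identity $[X]=[\wt{X}]-[Z]$ is a slight oversimplification, since in general one may need a finite $\bZ$-linear combination of classes of projective smooth $G$-varieties and zero-dimensional $G$-varieties (first pass to the smooth locus, then compactify and normalize), but this changes nothing in the argument and you rightly flag this step as standard in characteristic zero.
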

\begin{proof}
Ceci d\'ecoule
de la proposition \ref{prop:lmdmcourbe}
et du corollaire \ref{cor:lmdm:lm}.
\end{proof}
Nous verrons \`a la section suivante que dans le cas d'un corps $k$ de
caract\'eristique non nulle et d'une vari\'et\'e $X$ de dimension au plus
$1$, on peut encore d\'efinir des motifs virtuels
$\psin{X,G,\cI,\cC,n}$ de sorte que $\LmDM$ v\'erifie la d\'ecomposition en produit
eulerien motivique \eqref{eq:def:lm}.

\subsection{Formules et motifs virtuels associ\'es aux symboles d'Artin}\label{sec:symb:artin}

Soit $k$ un corps, $G$ un groupe fini, $\courbe$ une $k$-$G$-courbe
projective et lisse sur $k$, $Y=\courbe/G$ et $E$ un corps de
caract\'eristique z\'ero contenant
toutes les racines de l'unit\'e.  
Les auteurs de \cite{DhMi:motivic_chebotarev} d\'efinissent pour tout
entier $n\geq 1$ et pour toute
classe de conjugaison $C$ de $G$ le <<motif virtuel des \'el\'ements
de degr\'e $n$ de symbole d'Artin $C$>> 
not\'e
$\Ar(\courbe,G,C,n))_{n\geq 1} \in \kochmk{E}\otimes{\bQ}$
ayant la propri\'et\'e suivante : si $k$ est un corps global, 
pour presque toute place finie $\mfp$ et pour tout $n\geq 1$, 
$\Trp(\chil(\Ar(\courbe,G,C,n)))$ est le nombre d'\'el\'ements de
$((\courbe/G)_{\mfp})_{\text{\'et}}$ de degr\'e $n$ et de symbole d'Artin $C$.
La d\'efinition pr\'ecise de ces motifs virtuels est rappel\'ee ci-dessous.

Le but de cette section est d'expliquer (lorsque $k$ est de
caract\'eristique z\'ero) comment ces motifs
virtuels sont reli\'es aux motifs $(\psin{\courbe,G,\{e\},\cC,n})$.
Au passage, nous expliquerons \'egalement, pour un corps $k$ de
caract\'eristique non nulle, comment on peut donner un sens aux
motifs virtuels $(\psin{X,G,\cI,\cC,n})$ si $X$ est une $k$-vari\'et\'e
projective lisse de dimension au plus $1$.

La remarque de d\'epart est la suivante. Supposons que $k$ soit un corps de nombres.
D'apr\`es la proposition \ref{prop:spec:phixgcin} et la propri\'et\'e
\'evoqu\'ee ci-dessus, pour tout $n\geq 1$, on a pour presque toute place $\mfp$ l'\'egalit\'e
\begin{equation}\label{eq:rel:varphi:ar}
\Trp(\chil(\psin{\courbe,G,\{e\},\cC,n}))
=
\sum_{C\leadsto \cC}
\Trp(\chil(\Ar(\courbe,G,C,n))),
\end{equation}
o\`u, pour toute classe de conjugaison $C$ et tout $\cC\in \bconjc{G}$ on note $C \leadsto \cC$
la propri\'et\'e : tout \'el\'ement de $C$ engendre un \'el\'ement de $\cC$.

Nous allons montrer que la relation \ref{eq:rel:varphi:ar} est d\'ej\`a vraie
au niveau des motifs virtuels, i.e. la proposition suivante.
\begin{prop}\label{prop:symbartmot}
Soit $k$ un corps de caract\'eristique z\'ero, $G$ un groupe fini, $\courbe$ une $k$-$G$-courbe
projective et lisse sur $k$ 
et $E$ un corps de caract\'eristique z\'ero contenant
toutes les racines de l'unit\'e. 
Pour tout $n\geq 1$ et tout $\cC\in \bconjc{G}$ on a dans $\kochmk{E}\otimes \bQ$ la relation
\begin{equation}
\psin{\courbe,G,\{e\},\cC,n}=\sum_{C\leadsto \cC}\Ar(\courbe,G,C,n)
\end{equation}
\end{prop}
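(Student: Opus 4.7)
The plan is to realize both sides as images under the equivariant Chow motive construction $\chieq$ applied to $\courbe^n$ with its natural $(\sym_n \wr G)$-action (arising from the chain of covers $\courbe^n \to \Sym^n \courbe \to \Sym^n(\courbe/G)$), indexed by appropriate class functions, and to deduce the equality from a simple combinatorial identity between those class functions.

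First I would reinterpret $\psin{\courbe,G,\{e\},\cC,n}$ via the Denef--Loeser--Nicaise framework. By construction, this element is $\chif$ of a Galois formula on $\Sym^n(\courbe/G)_0$ describing closed degree-$n$ points of $\courbe/G$ with trivial inertia and decomposition class $\cC$. Over a pseudo-finite field, such points correspond (via the remark \ref{rem:psink} and the lemma \ref{lm:sansnom}) to $(\sym_n \wr G)$-orbits on geometric points of $\courbe^n$ whose Frobenius lies in the set of wreath-product elements of the form $(g_1,\dots,g_n)\sigma$ with $\sigma$ an $n$-cycle and such that the associated ``cycle product'' in $G$ generates, up to conjugation, a subgroup in $\cC$. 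Applying the equivariant formulation of Theorem \ref{thm:denefloeser} (equation \eqref{eq:chif:frev:chieq}) then identifies $\psin{\courbe,G,\{e\},\cC,n}$ with $\chieq(\courbe^n,\widetilde\theta_\cC)$, where $\widetilde\theta_\cC$ is the characteristic function on $\sym_n \wr G$ of the elements just described.

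Next I would unpack the Dhillon--Minac construction of $\Ar(\courbe,G,C,n)$. By an entirely parallel analysis, closed points of degree $n$ with Artin symbol $C$ are orbits associated to wreath-product elements whose cycle product lies in the conjugacy class $C$; this identifies $\Ar(\courbe,G,C,n)$ with $\chieq(\courbe^n,\widetilde\theta_C)$, where $\widetilde\theta_C$ is the characteristic function of such elements. The key combinatorial identity $\widetilde\theta_\cC = \sum_{C \leadsto \cC} \widetilde\theta_C$ is then immediate: the cycle product lies in some conjugacy class $C$ of $G$, and $C \leadsto \cC$ is precisely the condition that the subgroup $\langle g\rangle$ generated by any $g\in C$ lies in $\cC$. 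Linearity of $\chieq$ in the class function (Theorem \ref{thm:chiveq}(ii)) then yields the proposition.

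The main obstacle will be making the wreath-product analysis completely rigorous on both sides. In particular, translating the lifting / non-lifting conditions from the formula definition of $\psin{\courbe,G,\{e\},\cC,n}$ into the statement about the cycle product generating a subgroup in $\cC$ requires a careful Frobenius-orbit computation, as does matching Dhillon--Minac's construction of $\Ar(\courbe,G,C,n)$ to $\chieq(\courbe^n,\widetilde\theta_C)$. Once these two identifications are in place, the remaining combinatorial identity and the appeal to \eqref{eq:chif:frev:chieq} are routine.
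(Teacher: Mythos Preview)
Your strategy is conceptually attractive, but there is a genuine gap in step~(2). The Dhillon--Minac motives $\Ar(\courbe,G,C,n)$ are \emph{not} defined via a Galois formula or as $\chieq$ of an explicit class function: they are defined recursively by equation~\eqref{eq:form:artin}, whose right-hand side is the logarithmic derivative of $\LmDMnr(\courbe,G,\rho,t)$, itself built from $\LmDM(\courbe,G,\rho,t)=\Zm((h(\courbe)\otimes V_\rho)^{\Gop},t)$. There is therefore no ``parallel analysis'' available. To identify $\Ar(\courbe,G,C,n)$ with $\chieq(\courbe^n,\widetilde\theta_C)$ you would have to show that your candidate satisfies that same recursion, i.e.\ you would need an expression for $t\,\tfrac{d\log}{dt}\,\LmDMnr(\courbe,G,\rho,t)$ in terms of wreath-product equivariant classes on $\courbe^n$. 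Knowing that $\Ar(\courbe,G,C,n)$ has the correct $\Trp$-specialisation does not help here: different virtual motives can share all point counts, so the specialisation property does not characterise $\Ar$.

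The paper's proof does exactly this missing computation, but on the $\Lm$ side rather than via wreath products. Lemma~\ref{lm:form:LmDMnr} (which rests on Corollary~\ref{cor:lmdm:lm}, i.e.\ on the identification $\Lm=\LmDM$ obtained earlier from Artin's induction theorem) expresses $\LmDMnr$ as an Euler product indexed by the $\psin{\courbe,G,\{e\},\cD,n}$; taking the log-derivative and contracting against $\theta_{\cC}=\sum_{\rho}\wt m_{\rho,\cC}\chi_\rho$ yields the explicit formula~\eqref{eq:sumrhoinirrqG}. Summing the defining recursion~\eqref{eq:form:artin} over all $C\leadsto\cC$ and comparing with~\eqref{eq:sumrhoinirrqG} then gives the proposition by induction on $n$. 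Even if you carried out your step~(1) completely, step~(2) would force you back through an equivalent of Lemma~\ref{lm:form:LmDMnr}; that lemma, not the combinatorial identity $\widetilde\theta_\cC=\sum_{C\leadsto\cC}\widetilde\theta_C$, is where the content lies.
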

Avant toute chose, rappelons la d\'efinition des  motifs virtuels $(\Ar(\courbe,G,C,n))$. 
Sous les hypoth\`eses de la proposition \ref{prop:symbartmot},
les auteurs de \cite{DhMi:motivic_chebotarev}
associent \`a toute $E$-repr\'esentation $\rho$ de $G$ une fonction $L$ non ramifi\'ee 
d\'efinie comme suit : le lieu de ramification $(\courbe/G)^{\text{ram}}$ 
de $\courbe\to \courbe/G$ est une union finie de points ferm\'es de $\courbe/G$. 
Pour tel point $y$, on note $\cI_y$ (respectivement $\cD_y$) l'ensemble de ses groupes de
d\'ecomposition (respectivement d'inertie),  
$x_y$ un point de la fibre au-dessus de $y$, $D_y$ (respectivement $I_y$) 
le groupe de d\'ecomposition (respectivement d'inertie) associ\'e, et
$\rho_y$ la $\bQ$-repr\'esentation de $D_y/I_y$ induite par $\rho$.
On  pose alors, suivant \cite{DhMi:motivic_chebotarev},
\begin{equation}
\LmDMnr(\courbe,G,\rho,t) 
=
\LmDM(\courbe,G,\rho,t)
\,
\prod_{y\in (\courbe/G)^{\text{ram}}} 
\,
\LmDM(\Spec(\kappa_{x_y}),D_y/I_y,\rho_y,t)^{-1}.
\end{equation}
\begin{rem}
Tout comme pour la d\'efinition de $\LmDM$, il est inutile dans la
d\'efinition de $\LmDMnr$ de supposer que le corps $E$ contient toutes
les racines de l'unit\'e. En particulier, la d\'efinition a un sens pour une
$\bQ$-repr\'esentation $\rho$.
\end{rem}
\begin{rem}\label{rem:compat:LmDMnr:sd}
La compatibilit\'e de $\LmDM$ aux sommes directes de repr\'esentations (\cite[Proposition
2.8]{DhMi:motivic_chebotarev}) entra\^\i ne aussit\^ot la propri\'et\'e
similaire pour  $\LmDMnr$. 
Par ailleurs la remarque \ref{rem:compat:scal} montre
que $\LmDMnr$ est compatible au changement de coefficients.
\end{rem}

Soit $C$ une classe de conjugaison de $G$. 
On note pour $n\geq 1$
\begin{equation}\label{eq:def:ecPnC}
\ecP_n(C)\eqdef\{D\in \Conj{G},\quad x\in D\imply x^n \in C\}.
\end{equation}
On note $\ind_C$ la fonction qui vaut $1$ sur $C$ et $0$ sinon.
Il existe donc des nombres rationnels $m_{\rho,C}$
tels que 
\begin{equation}\label{eq:indC:mrhoC}
\ind_C=\sum_{\rho\in \irrE{G}} m_{\rho,C}\,\chi_{\rho}
\end{equation}
Les motifs virtuels $\Ar(\courbe,G,C,n)$ sont alors d\'efinis par
r\'ecurrence sur $n\geq 1$
\`a l'aide de la formule suivante
\begin{equation}\label{eq:form:artin}
\sum_{n\geq 1} \left(\sum_{\substack{d|n\\ D\in \ecP_{\frac{n}{d}}(C)}} \Ar(\courbe,G,D,d)\right) t^n
=\sum_{\rho\in \irrq{G}} m_{\rho,C}\,t\,\frac{d\log}{dt} L^{\text{nr}}(\courbe,G,\rho,t).
\end{equation}

\begin{lemme}\label{lm:form:LmDMnr}
Pour toute $\bQ$-repr\'esentation $\rho$, on a dans $\kochmkq{\bQ}$ la relation
\begin{equation}
\LmDMnr(\courbe,G,\rho,t) 
=
\prod_{n\geq 1}\,\,
\prod_{\cD\in \bconjc{G}} 
\polcar_{\rho,\{e\},\cD}\left(t^n\right)^{\,-\psin{\courbe,G,\{e\},\cD,n}}.
\end{equation}
\end{lemme}
\begin{proof}
D'apr\`es le point 1 du corollaire \ref{cor:lmdm:lm} et les d\'efinitions
de  $\LmDMnr$ et $\Lm$ il suffit de montrer qu'on a la relation
\begin{multline}
\prod_{n\geq 1}\,\,\prod_{\cI\in \bconj{G}\setminus \{e\}} \,\,\prod_{\cD\in \bconjc{G,\cI}} 
\polcar_{\rho,\cI,\cD}(t^n)^{-\psin{\courbe,G,\cI,\cC,n}}
\\
=
\prod_{y\in (\courbe/G)^{\text{ram}}} \Lm(\Spec(\kappa_{x_y}),D_y/I_y,\rho_y,t).
\end{multline}
Or pour $y\in (\courbe/G)^{\text{ram}}$ on  a
\begin{multline}
\Lm(\Spec(\kappa_{x_y}),D_y/I_y,\rho_y,t)
\\
=
\prod_{1\leq n\leq [\kappa(y):k]}
\,\,
\prod_{\substack{
D\leq D_y/I_y
}
}
\polcar_{\rho_y,\{e\},D}\left(t^{n}\right)^{\,-\psin{\Spec(\kappa_{x_y}),D_y/I_y,\{e\},D,n}}.
\end{multline}
Par ailleurs pour $n\geq 1$, $\cI\in \bconj{G}\setminus \{e\}$ et
$\cD\in \bconjc{G,\cI}$, 
on a pour toute $k$-extension pseudo finie $K$
\begin{equation}
\psin{\courbe,G,\cI,\cD,n}(K)
=
\bigsqcup_{\substack{
y\in (\courbe/G)^{\text{ram}}
\\
[\kappa_y:k]\geq n
\\
\cI_y=\cI
\\
\cD\subset\cD_y
}
}
\psin{\Spec(\kappa_{x_y}),D_y/I_y,\{e\},\pi_y(\cD),n}(K).
\end{equation}
o\`u $\pi_y(\cD)$ est l'image dans $D_y/I_y$ d'un \'el\'ement de $D$ de
$\cD$ v\'erifiant $D\subset D_y$.
Ceci d\'ecoule en effet de la remarque \ref{rem:phixgidn}.
On en d\'eduit l'\'egalit\'e
\begin{equation}\label{eq:psin:courbe}
\psin{\courbe,G,\cI,\cD,n}
=
\sum_{\substack{
y\in (\courbe/G)^{\text{ram}}
\\
[\kappa_y:k]\geq n
\\
\cI_y=\cI
\\
\cD\subset\cD_y
}
}
\psin{\Spec(\kappa_{x_y}),D_y/I_y,\pi_y(\cD),n}.
\end{equation} 
Le r\'esultat cherch\'e en d\'ecoule ais\'ement.
\end{proof}

\begin{proof}[D\'emonstration de la proposition \ref{prop:symbartmot}]
Pour tout $\rho\in \irrE{G}$ et tout $\cC\in \bconjc{G}$ on pose (cf. \eqref{eq:indC:mrhoC}) 
$m_{\rho,\cC}\eqdef\sumu{C\leadsto \cC}m_{\rho,C}$.
On note pour $n\geq 1$
\begin{equation}
\ecP_n(\cC)
\eqdef
\{\cD\in \bconjc{G},\quad 
\forall D
,\quad
D\leadsto \cD\imply \exists C
,\quad C\leadsto \cC,\,\,D\in \ecP_n(C)\}.
\end{equation}

En sommant \eqref{eq:form:artin} sur toutes les classes de conjugaison
$C$ v\'erifiant $C\leadsto \cC$, on obtient la relation
\begin{equation}\label{eq:form:artin:bis}
\sum_{n\geq 1} 
\left[
\sum_{d|n} 
\sum_{\cD\in \ecP_{\frac{d}{n}}(\cC)}
\left(
\sum_{
\substack{
D\leadsto \cD
}
}
\Ar(\courbe,G,D,d)
\right)
\right] t^n
=\sum_{\rho\in \irrE{G}} m_{\rho,\cC}\,t\,\frac{d\log}{dt} \LmDMnr(\courbe,G,\rho,t) 
\end{equation}
On rappelle que l'on d\'esigne par $\theta_{\cC}$ la fonction
\begin{equation}
\theta_{\cC}\,:\,g\mapsto
\left\{
\begin{array}{cl}
1&\text{si le groupe engendr\'e par $g$ est dans }\cC\\
0&\text{sinon.}
\end{array}
\right.
\end{equation}
En particulier, $\theta_{\cC}$ est une fonction $\bQ$-centrale. Il
existe donc des rationnels $(\wt{m}_{\rho,\cC})_{\rho\in \irrq{G}}$ tels
que
\begin{equation}\label{eq:rel:thetaC}
\theta_{\cC}=\sum_{\rho\in \irrq{G}}\wt{m}_{\rho,\cC}\,\chi_{\rho}.
\end{equation}
Comme $\theta_{\cC}=\sumu{C\leadsto \cC}\ind_{C}$, on a alors  
\begin{equation}
\sum_{\rho\in \irrq{G}}\wt{m}_{\rho,\cC}\,\chi_{\rho}=\sum_{\rho\in \irrE{G}}m_{\rho,\cC}\,\chi_{\rho}.
\end{equation}
D'apr\`es la remarque \ref{rem:compat:LmDMnr:sd}, il vient
\begin{equation}\label{eq:form:artin:ter}
\sum_{\rho\in \irrE{G}} m_{\rho,\cC}\,t\,\frac{d\log}{dt}
\LmDMnr(\courbe,G,\rho,t)
=
\sum_{\rho\in \irrq{G}} \wt{m}_{\rho,\cC}\,t\,\frac{d\log}{dt}
\LmDMnr(\courbe,G,\rho,t).
\end{equation}
Pour $\cC\in \bconjc{G}$, on note
$x_{\cC}$ un g\'en\'erateur d'un \'el\'ement de $\cC$.
On a alors pour toute $\bQ$-repr\'esentation $\rho$
\begin{equation}
t\,\frac{d\log}{dt} \polcar_{\rho,\{e\},\cC}(t^n)
=
-n\,\sum_{k\geq 1} \Tr(\rho(x_{\cC}^k))\,t^{\,n\,k}.
\end{equation}
D'apr\`es le lemme \ref{lm:form:LmDMnr}, on a donc 
\begin{align}
&\phantom{=}\sum_{\rho\in \irrq{G}} \wt{m}_{\rho,\cC}\,t\,\frac{d\log}{dt} \LmDMnr(\courbe,G,\rho,t)\\
&=
\sum_{\rho\in \irrq{G}} \wt{m}_{\rho,\cC}\,
\sum_{\cD\in \Conjc{G}} 
\sum_{n\geq 1}
-\psin{\courbe,G,\{e\},\cD,n}\,
t\,
\frac{d\log}{dt} \polcar_{\rho,\{e\},\cD}(t^n)
\\
&
=
\sum_{\rho\in \irrq{G}} 
\wt{m}_{\rho,\cC}\,
\sum_{\cD\in \Conjc{G}} 
\sum_{n\geq 1}
n\,\psin{\courbe,G,\{e\},\cD,n} 
\sum_{k\geq 1} \Tr(\rho(x_{\cD}^k))\,t^{\,n\,k}
\\
&=\sum_{n\geq 1}  \sum_{\cD\in \Conjc{G}} 
n\,\psin{\courbe,G,\{e\},\cD,n} 
\sum_{k\geq 1}  \sum_{\rho\in \irrq{G}} \wt{m}_{\rho,\cC} \Tr(\rho(x_{\cD}^k))\,t^{\,n\,k}.
\end{align} 
Or on a d'apr\`es \eqref{eq:rel:thetaC}
\begin{equation}
\sum_{\rho\in \irrq{G}} \wt{m}_{\rho,\cC} \Tr(\rho(x_{\cD}^k))
=\theta_{\cC}(x_{\cD}^k)
=
\left\{
\begin{array}{cl}
1&\text{si }\langle x_{\cD}^k\rangle \in \cC\text{ i.e. si }\cD\in \ecP_k(\cC)\\
0&\text{sinon.}
\end{array}
\right.
\end{equation}
Finalement on obtient
\begin{align}
\sum_{\rho\in \irrq{G}}\!\!\!\!\wt{m}_{\rho,\cC}\,t\,\frac{d\log}{dt}
\LmDMnr(\courbe,G,\rho,t)
&=\sum_{n\geq 1}\,\,   
\sum_{k\geq 1}\,\,   
\sum_{
\substack{
\cD\in \ecP_k(\cC)}} 
n\,\psin{\courbe,G,\{e\},\cD,n} 
\,t^{\,n\,k}
\\
\label{eq:sumrhoinirrqG}
&=\sum_{n\geq 1}   
\left[
\sum_{d|n}   
\left(
\sum_{\substack{
\cD\in \ecP_{\frac{n}{d}}(\cC)}} 
d\,\psin{\courbe,G,\{e\},\cD,d}
\right)
\right]
\,t^{\,n}.
\end{align} 
D'apr\`es \eqref{eq:form:artin:bis}, 
\eqref{eq:form:artin:ter} 
et cette derni\`ere relation, 
on obtient pour tout $n\geq 1$ l'\'egalit\'e
\begin{equation}
\sum_{d|n} 
\,\,
\sum_{\cD\in \ecP_{\frac{d}{n}}(\cC)}
\left(
\sum_{
\substack{
D\leadsto \cD
}
}
\Ar(\courbe,G,D,d)
\right)
=
\sum_{d|n}   
\,\,
\sum_{\substack{
\cD\in \ecP_{\frac{n}{d}}(\cC)}} 
d\,\psin{\courbe,G,\{e\},\cD,d}
\end{equation}
d'o\`u le r\'esultat cherch\'e.
\end{proof}
\begin{rem}\label{rem:nonram:dimun}
En \'ecrivant les relations 
\eqref{eq:sumrhoinirrqG} pour tous les \'el\'ements $\cC$ de $\bconjc{G}$
et en utilisant le point 2 du corollaire \ref{cor:lmdm:lm} et les propri\'et\'es standards
des fonctions $L$ d'Artin classiques,
on montre par r\'ecurrence sur $n$ que si $k$ est un corps de nombres, il existe
un nombre fini de places $S$ tel que pour tout $\mfp\notin S$, pour
tout $\cC\in \Conjc{G}$ et tout $n\geq 1$
$\Trp(\chil(\psin{\courbe,G,\{e\},\cD,n}))$ 
est \'egal au nombre de points ferm\'es de degr\'e $n$ de $(X/G)_{\mfp,\text{\'et}}$
 ayant un groupe de d\'ecomposition dans $\cC$
(cf. remarque \ref{rem:prop:spec:phixgcin}).
\end{rem}
\begin{rem}
Comme la d\'efinition de $\LmDMnr(\courbe,G,\rho,t)$ a un sens en
caract\'eristique non nulle,
les relations \eqref{eq:sumrhoinirrqG} permettent de d\'efinir
$\psin{\courbe,G,\{e\},\cD,n}$ en caract\'eristique non nulle.
Par ailleurs, si $k$ est de caract\'eristique z\'ero et si $L$ est une $k$-extension finie s\'eparable munie d'une
action libre de $G$ on a \'egalement pour tout $\cC\in \bconjc{G}$
\begin{equation}\label{eq:sumrhoinirrqG:dim0}
\sum_{\rho\in \irrq{G}}\!\!\!\!\wt{m}_{\rho,\cC}\,t\,\frac{d\log}{dt}
\Lm(\Spec(L),G,\rho,t)
=\sum_{n\geq 1}   
\left[
\sum_{d|n}   
\left(
\sum_{\substack{
\cD\in \ecP_{\frac{n}{d}}(\cC)}} 
d\,\psin{\Spec(L),G,\{e\},\cD,d}
\right)
\right]
\,t^{\,n}.
\end{equation}
Ces relations permettent de d\'efinir $\psin{\Spec(L),G,\{e\},\cD,d}$ 
(et donc $\psin{\Spec(L),G,\cI,\cD,d}$ d'apr\`es la remarque
\ref{rem:ex:dim0}) si $k$ est de caract\'eristique non nulle.
Finalement, compte tenu de la relation \eqref{eq:psin:courbe},   
il est possible pour un
corps $k$ de caract\'eristique non nulle de donner un sens aux motifs virtuels $\psin{X,G,\cI,\cD,n}$
pour toute $k$-$G$-vari\'et\'e projective et lisse $X$ de dimension au plus $1$.
En utilisant les relations d'orthogonalit\'e
\begin{equation}\label{eq:orth}
\sum_{\cC\in \bconjc{G}} 
\chi_{\rho_0}(\cC)\,
\wt{m}_{\rho,\cC}
=\left\{
\begin{array}{ccc}
0&\text{si}&\rho\neq \rho_0\\
1&\text{si}&\rho=\rho_0
\end{array}
\right.
,
\end{equation}
on peut alors montrer en partant des relations \eqref{eq:sumrhoinirrqG} 
et \eqref{eq:sumrhoinirrqG:dim0}
qu'on a pour
tout $\rho$ la d\'ecomposition
\begin{equation}
\LmDM(X,G,\rho,t)
=
\prod_{n\geq 1}\,\,
\prod_{\cI\in \bconj{G}}\, \,
\prod_{\cD\in \bconjc{G,\cI}}\,\,
\polcar_{\rho,\cI,\cD}(t^n)^{-\psin{X,G,\cI,\cD,n}}.
\end{equation}
\end{rem}

\section{Le volume de Tamagawa motivique}

\subsection{Le volume de Tamagawa classique} 
Soit $k$ un corps fini de cardinal $q$, $\courbe$ une $k$-courbe projective, lisse et
g\'eom\'etriquement int\`egre, $K$ son corps de fonctions,
$\scX\to\courbe$ un morphisme projectif, lisse, \`a fibres
g\'eom\'etriquement int\`egres, dont la fibre g\'en\'erique $X$ 
 v\'erifie les hypoth\`eses suivantes :   
\begin{hyps}\label{hyp:sck}
\begin{enumerate}
\item
On a $H^1(X,\str{X})=H^2(X,\str{X})=0$.
\item
$\Pic(\sep{X})$ est un $\galabs{K}$-module discret libre de rang fini
qui co\" \i ncide avec $\Pic(\clo{X})$.
\item
Le rang de $\Pic(\sep{X})$ co\"\i ncide avec le deuxi\`eme nombre de Betti de $X$.
\end{enumerate}
\end{hyps}
Sous ces hypoth\`eses, nous rappelons la d\'efinition, d\^ue \`a Peyre, du volume de Tamagawa de la
famille $\scX/\courbe$. Lorsque $\scX/\courbe$ v\'erifie l'approximation faible, 
ce volume appara\^\i t conjecturalement dans 
l'estimation asymptotique du nombre de sections de $\scX\to \courbe$ de
degr\'e anticanonique born\'e (cf. \cite{Pey:var_drap} pour plus de
d\'etails, ainsi que la remarque \ref{rem:AF} et la section \ref{subsec:lien:conj} ci-dessous).

Soit $\ecH$ un sous-groupe d'indice fini de 
$\galabs{K}$ agissant trivialement sur  $\Pic(\clo{X})$,
$K'\eqdef (\sep{K})^{\ecH}$, $G\eqdef \Gal(K'/K)$ 
et $\scD\to\courbe$ le $k$-rev\^etement galoisien ramifi\'e de groupe $G$
correspondant \`a l'extension $K'/K$.
On note $\rhoNS$ la $\bQ$-repr\'esentation de $G$ induite par l'action de
$\galabs{K}$ sur $\Pic(\clo{X})$.

Le volume de Tamagawa de $\scX/\courbe$
peut alors \^etre d\'efini comme le produit eulerien
\begin{equation}\label{eq:def:tam:clas}
\Vol(\scX/\courbe)
\eqdef
q^{(1-g(\courbe))\,\dim(X)}
\prod_{y\in \courbe^{(0)}}L_y(\scD,G,\rhoNS,q^{-1})^{-1}
\frac{\card{\scX_y(\kappa_y)}}{\card{\kappa_y}^{\dim(X)}}.
\end{equation}
Gr\^ace \`a la compatibilit\'e des fonctions $L$ d'Artin 
au quotient, cette d\'efinition est ind\'ependante du choix de $\ecH$.
Dans le cas o\`u la famille $\scX\to \courbe$ est constante, 
nous donnerons dans la section suivante une version motivique 
du volume de Tamagawa. 
Rappelons d'abord succinctement les arguments qui permettent \`a Peyre de
montrer la convergence du produit eulerien \eqref{eq:def:tam:clas}.
La convergence du produit eulerien motivique d\'efinissant le volume de
Tamagawa motivique  sera d\'emontr\'ee par une adaptation motivique de
ces arguments. Les hypoth\`eses \ref{hyp:sck}  ont les cons\'equences suivantes :
\begin{enumerate}
\item
$b_1(X)=0$ et $b_1(\scX_y)=0$ 
pour tout $y\in \courbe^{(0)}$ ;
\item
pour presque tout $y\in \courbe^{(0)}$, on a un isomorphisme 
\begin{equation}\label{eq:picxbar:picxybar}
\Pic(\sep{X})\isom \Pic(\clo{\scX}_{\!y})
\end{equation}
compatible aux actions de $\galabs{K}$ et
$\galabs{\kappa_y}$ ;
\item
pour tout $\ell$ distinct de la
caract\'eristique de $k$ et
pour presque tout $y\in \courbe^{(0)}$ 
on a un isomorphisme de $\galabs{\kappa_y}$-$\Ql$-module
\begin{equation}\label{eq:picxbar:bis}
\Pic(\clo{\scX_y})\otimes \Ql \isom H^2_{\ell}(\scX_y)\otimes \Ql(1).
\end{equation}
\end{enumerate}
Soit $y\in \courbe^{(0)}$ un \'el\'ement non ramifi\'e, et $F_y$ un
frobenius associ\'e.
Son image dans $\galabs{\kappa_y}$ est donc $F_{\kappa_y}$.
On a l'estimation
\begin{equation}
L_y(\scD,G,\rhoNS,q^{-1})^{-1}=1-\Tr\left(F_y|\Pic(\clo{X})\right)\,q^{-\deg(y)}+\bigo{}{q^{-2\,\deg(y)}}
\end{equation}
soit d'apr\`es l'isomorphisme \eqref{eq:picxbar:picxybar}
\begin{equation}\label{eq:lyscD}
L_y(\scD,G,\rhoNS,q^{-1})^{-1}=1-\Tr\left(F_{\kappa_y}|\Pic(\clo{\scX_y})\right)\,q^{-\deg(y)}+\bigo{}{q^{-2\,\deg(y)}}
\end{equation}
Par ailleurs, on a d'apr\`es la formule des traces de Grothendieck-Lefschetz
\begin{equation}\label{eq:cardscXy}
\card{\scX_y(\kappa_y)}=\sum_{0\leq r\leq 2\,\dim(X)}(-1)^r\,\Tr(F_{\kappa_y}| H^r_{\ell}(\scX_y))
\end{equation}
En utilisant le th\'eor\`eme de Deligne sur les valeurs propres du Frobenius, la nullit\'e du premier nombre de
Betti, l'isomorphisme \eqref{eq:picxbar:bis} et la dualit\'e de Poincar\'e on d\'eduit de \eqref{eq:cardscXy} l'estimation
\begin{equation}\label{eq:cardscXy:bis}
\card{\scX_y(\kappa_y)}
=
q^{\,\dim(X)\,\deg(y)}
+\Tr\left(F_{\kappa_y}|\Pic(\clo{\scX_y})\right)\,q^{(\dim(X)-1)\,\deg(y)}+\bigo{}{q^{(\dim(X)-\frac{3}{2})\,\deg(y)}}
\end{equation}
De \eqref{eq:lyscD} et \eqref{eq:cardscXy:bis}, 
on d\'eduit finalement que le produit eulerien
\begin{equation}\label{eq:prodeul}
\prod_{y\in \courbe^{(0)}}L_y(\scD,G,\rhoNS,q^{-1})^{-1}
\frac{\card{\scX(\kappa_y)}}{\card{\kappa_y}^{\dim(X)}}
\end{equation}
est absolument convergent.

\begin{rem}\label{rem:AF}
Pour $x\in \courbe^{(0)}$, notons $K_x$ le compl\'et\'e de $K$ pour la
valuation d\'efinie par $x$.
Le volume de Tamagawa \eqref{eq:def:tam:clas}
correspond  au volume
de l'espace $\prod_{x\in\courbe^{(0)}}X(K_x)$ pour une certaine
mesure ad\'elique d\'efinie par Peyre (cette derni\`ere d\'efinition est plus
g\'en\'erale et vaut pour des $K$-vari\'et\'es n'ayant pas n\'ecessairement
bonne r\'eduction partout), et le nombre de Tamagawa de la
famille $\scX/\courbe$ est d\'efini comme \'etant
le volume de l'adh\'erence de $X(K)$ pour cette m\^eme mesure. 
Il co\" \i ncide donc avec le volume de Tamagawa si $X$ v\'erifie
l'approximation faible, i.e. si $X(K)$ est dense dans
$\prod_{x\in\courbe^{(0)}}X(K_x)$
(par exemple si $X$ est rationnelle).
\end{rem}
\begin{rem}\label{rem:norm}
Peyre normalise la mesure utilis\'ee par la partie
principale de la fonction $L$ d'Artin en $t=q^{-1}$, i.e. par la quantit\'e
\begin{equation}
\left[(1-q\,t)^{\rg(\Pic(\clo{X})^{G})}\,\Lar(\scD,G,\rhoNS,t)\right]_{t=q^{-1}}.
\end{equation}
Nous nous \'ecartons de cette d\'efinition notamment \`a cause du fait que
l'analogue motivique naturel de cette quantit\'e n'a pas n\'ecessairement
de sens dans l'anneau que nous consid\'erons (cf. la remarque \ref{rem:norm:bis}).
Bien entendu il faudra tenir compte de ce choix dans la formulation de
la version motivique de la conjecture de Manin, ce qui se fait \`a peu
de frais, gr\^ace la proposition \ref{prop:lmdmcourbe}
(cf. la section \ref{subsec:lien:conj}). Une question pertinente pour
une \og bonne \fg~formulation de la conjecture de Manin semble
d'ailleurs plut\^ot \^etre li\'ee aux p\^oles qui doivent appara\^\i tre sur le cercle de convergence
(cf. la remarque \ref{rem:enonce:question}).
\end{rem}
\begin{rem}
Supposons que la famille $\scX\to \courbe$ soit isotriviale, 
i.e. qu'il existe une $k$-vari\'et\'e $X$ telle que pour
tout $y\in \courbe^{(0)}$ on a $\scX_y\isom X\times_k \kappa_y$.
Le produit eulerien \eqref{eq:prodeul} peut se
r\'e\'ecrire
\begin{equation}\label{eq:lm:ecr:L:class:isotriv}
q^{\,(1-g(\courbe))\,\dim(X)}\,
\prod_{n\geq 1} \,
\prod_{
\substack{
\cI\in \bconj{G} \,
\\
\cD\in \bconjc{G,\cI}
}
}
\polcar_{\rhoNS,\cI,\cD}(q^{-n})^{\,\card{\scD^{(0)}_{G,\cI,\cD,n}}}
\left(
\frac{\card{X(k_n)}}{q^{\,n\,\dim(X)}}
\right)
^{\card{\courbe^{(0)}_{n}}}
\end{equation}
\end{rem}
\begin{rem}\label{rem:triv}
On se place \`a pr\'esent dans le cas d'une famille triviale $\scX=X\times \courbe$.
Remarquons que les trois premi\`eres hypoth\`eses de \ref{hyp:sck} \'equivalent alors aux
hypoth\`eses suivantes : $H^1(X,\ecO_X)=H^2(X,\ecO_X)=0$, 
et $\Pic(\clo{X})$ est libre de rang fini \'egal \`a  $b_2(X)$.
On a alors un isomorphisme $\Pic(\sep{X_K})=\Pic(\clo{X})$ compatible
aux actions de $\galabs{K}$ \`a gauche et $\galabs{k}$ \`a droite.
On a $\scD\isom\courbe\times_k k'$.
En particulier le rev\^etement
$\scD\to \courbe$ est non ramifi\'e et \eqref{eq:lm:ecr:L:class:isotriv}
se r\'e\'ecrit
\begin{equation}\label{eq:lm:ecr:L:class:triv:1}
q^{\,(1-g(\courbe))\,\dim(X)}\,
\prod_{n\geq 1} \,
\prod_{
\substack{
C<G
}
}
\polcar_{\rhoNS,\{e\},C}(q^{-n})^{\,\card{\scD^{(0)}_{G,\{e\},C,n}}}
\left(
\frac{\card{X(k_n)}}{q^{\,n\,\dim(X)}}
\right)
^{\card{\courbe^{(0)}_{n}}}
\end{equation}
Pour tout sous-groupe cyclique $C$ de $G$ et tout $n\geq 1$, 
on pose $\eta_n(C)=1$  si $\card{C}=\frac{[k':k]}{n\wedge [k':k]}$
et $\eta_n(C)=0$ sinon.
On a donc
\begin{equation}\label{eq:courbe:0:n:I:C}
\card{\scD^{(0)}_{G,\{e\},C,n}}
=
\eta_n(C)\,\card{\courbe^{(0)}_n}
\end{equation}
et \eqref{eq:lm:ecr:L:class:triv:1}
  peut se
r\'e\'ecrire
\begin{equation}\label{eq:lm:ecr:L:class:bis}
q^{\,(1-g(\courbe))\,\dim(X)}\,
\prod_{n\geq 1} \,
\prod_{C<G}
\left(
\polcar_{\rhoNS,\{e\},C}(q^{-n})^{\eta_n(C)} 
\frac{\card{X(k_n)}}{q^{\,n\,\dim(X)}}
\right)^{\card{\courbe^{(0)}_{n}}}.
\end{equation}
\end{rem}

\subsection{Vers un analogue motivique du volume de Tamagawa}

D\'esormais, {\em on consid\`ere uniquement le cas d'une famille constante}.
Dans le cas d'une famille isotriviale, il est imm\'ediat de concevoir
un analogue motivique de l'expression \ref{eq:lm:ecr:L:class:isotriv}, 
mais nous ne savons pas d\'emontrer la convergence du produit eulerien motivique en question.

\subsubsection{D\'efinitions}
Soit $k$ un corps
et 
$X$ une $k$-vari\'et\'e projective, lisse et g\'eom\'etriquement int\`egre, v\'erifiant
les hypoth\`eses suivantes :

\begin{hyp}\label{hyp:X}
$\Pic(\clo{X})$ est un 
$\bZ$-module
libre de rang fini
\'egal \`a $b_2(X)$, qui co\"\i ncide avec $\Pic(\sep{X})$.
\end{hyp}
\begin{hyp}\label{hyp:X:bis}
Les groupes $H^1(X,\str{X})$ et  $H^2(X,\str{X})$ sont nuls.
\end{hyp}
\begin{rem}
Si $k$ est de caract\'eristique z\'ero, l'hypoth\`ese $H^1(X,\str{X})=0$ est
superflue au vu de l'hypoth\`ese \ref{hyp:X}.
\end{rem}
\begin{lemme}\label{lm:csqces:hyp}
L'hypoth\`ese \ref{hyp:X} entra\^ \i ne la nullit\'e de $b_1(X)$ et 
 l'existence d'un isomorphisme de
$\galabs{k}$-$\Ql$-modules
\begin{equation}\label{eq:picxbarbis}
\Pic(\clo{X})\otimes \Ql \isom H^2_{\ell}(X)\otimes \Ql(1).
\end{equation}
Si $k$ est de caract\'eristique z\'ero et $X$ est une vari\'et\'e de Fano, 
les hypoth\`eses \ref{hyp:X} et \ref{hyp:X:bis} sont v\'erifi\'ees. 
\end{lemme}
\begin{proof}
Tous les arguments n\'ecessaires se trouvent dans \cite{Pey:duke}, nous les rappelons.
Les suites exactes de Kummer induisent des suites
exactes de $\galabs{k}$-modules
\begin{equation}
0\to H^1_{\text{\'et}}(\clo{X},\Zl(1))\to T_{\ell}(\Pic(\clo{X})\to 0
\end{equation}
et
\begin{equation}
0\to \Pic(\clo{X})\otimes \Zl\to H^2_{\text{\'et}}(\clo{X},\Zl(1))\to
T_{\ell}(\Br(\clo{X}))\to 0
\end{equation}
o\`u $T_{\ell}(M)$ d\'esigne le module de Tate de $M$. Sous l'hypoth\`ese \ref{hyp:X}, $T_{\ell}(\Pic(\clo{X}))$ est
nul, et donc $b_1(X)=0$.

D'apr\`es \cite[Corollaire 3.4]{Gro:BrauerII}, le corang $\ell$-adique de $\Br(\clo{X})$ est la
diff\'erence entre $b_2(X)$ et le rang de $\Pic(\clo{X})$. Sous
l'hypoth\`ese \ref{hyp:X} il est donc nul, d'o\`u
$T_{\ell}(\Br(\clo{X}))=0$, d'o\`u l'isomorphisme recherch\'e.

Si $k$ est de caract\'eristique z\'ero et $X$ est une vari\'et\'e de Fano, l'hypoth\`ese \ref{hyp:X:bis} d\'ecoule
du th\'eor\`eme d'annulation de Kodaira. 
D'apr\`es \cite[Lemme 1.2.1 et 
remarques 1.2.2 et 1.2.3]{Pey:duke}) $\Pic(\clo{X})$ est libre de rang
fini \'egal \`a $b_2(X)$.
\end{proof}
\begin{lemme}\label{lm:iso:pic:pic}
Supposons que $k$ soit un corps global. Alors, sous les hypoth\`eses
\ref{hyp:X} et \ref{hyp:X:bis}, pour presque toute place finie $\mfp$, il
existe 
un isomorphisme
\begin{equation}
\Pic(\clo{X})\isom \Pic(\clo{X_{\mfp}})
\end{equation}
compatible aux actions de $\galabs{k}$ et
$\galabs{\kappa_{\mfp}}$. 
\end{lemme}
\begin{proof}
Ceci d\'ecoule de la d\'emonstration du lemme 2.2.1 de \cite{Pey:duke}.
\end{proof}
Sous les hypoth\`eses ci-dessus nous allons d\'efinir, pour toute $k$-courbe $\courbe$ projective,
lisse et  g\'eom\'etriquement int\`egre le volume de Tamagawa motivique de la famille
constante $X\times_k \courbe\to \courbe$.
Il serait int\'eressant d'\'etendre ces d\'efinitions \`a une famille non
constante, par exemple \`a une famille isotriviale.
\begin{notas}\label{notas:ecD}
Soit $\ecH$ un sous-groupe d'indice fini de 
$\galabs{k}$ agissant trivialement sur  $\Pic(\clo{X})$,
 $k'\eqdef \clo{k}^{\ecH}$ et $G=\Gal(k'/k)$. 
On note $\rhoNS$ la $\bQ$-repr\'esentation de $G$ induite par l'action de
$\galabs{k}$ sur $\Pic(\clo{X})$. On pose $\scD\eqdef\courbe\times_{\Spec(k)} \Spec(k')$, de sorte que  
$\scD\to\courbe$ est un $k$-rev\^etement galoisien \'etale de groupe $G$.
\end{notas}

Un analogue motivique naturel du produit eulerien \eqref{eq:lm:ecr:L:class:triv:1} 
est alors donn\'e (formellement du moins dans un premier temps) par le produit eulerien motivique
\begin{equation}\label{eq:an:mot}
\bL^{(1-g(\courbe))\,\dim(X)}
\,
\prod_{n\geq 1} 
\,
\prod_{\cC\in \bconjc{G}}
\,
\polcar_{\rhoNS,\{e\},\cC}(\bL^{-n})^{\,\psin{\scD,G,\{e\},\cC,n}}
\,
\left[\frac{\Phi_n(X)}{\bL^{n\,\dim(X)}}
\right]^{\psinx{\courbe}}.
\end{equation}
Notons que, gr\^ace \`a la compatibilit\'e au quotient des fonctions $\bL$ d'Artin motiviques, pour tout $n$ la s\'erie formelle 
\begin{equation}
\prod_{\cC\in \bconjc{G}}
\,
\polcar_{\rhoNS,\{e\},\cC}(t^{n})^{\,\psin{\scD,G,\{e\},\cC,n}}
\end{equation}
est ind\'ependante du choix de $\ecH$ (i.e. de l'extension de $k$ trivialisant $\Pic(\clo{X})$).
\begin{rem}\label{rem:norm:bis}
Si l'on d\'esire adapter \`a ce cadre la normalisation utilis\'ee par Peyre (cf. remarque
\ref{rem:norm}), il faut en outre multiplier l'expression pr\'ec\'edente
par la valeur en $t=\bL^{-1}$ de la s\'erie
\begin{equation}\label{eq:1LTLm}
(1-\bL\,t)^{\rg(\Pic(\clo{X})^{G})}\,\Lm(\scD,G,\rhoNS,t).
\end{equation}
Mais d'apr\`es le point \ref{item:3:prop:lmdmcourbe} de la proposition
\ref{prop:lmdmcourbe}, on a 
\begin{equation}
\Zm(\Pic(\clo{X}),t)^{-1}
\,
\Zm(\Pic(\clo{X}),\bL\,t)^{-1}
\,
\Lm(\courbe,G,\rho,t)\in 1+\kochmk{\bQ}[t]^+.
\end{equation}
On voit alors que la s\'erie \eqref{eq:1LTLm} ne converge pas en $t=\bL^{-1}$
pour la topologie que l'on va utiliser  si $\Pic(\clo{X})$ n'est pas
un module galoisien trivial :
si $M$ 
est une  repr\'esentation irr\'eductible non triviale 
la s\'erie $\Zm(M,t)$ ne converge pas en $t=1$. Une
solution pourrait \^etre d'inverser $\Zm^{-1}(M,1)=\sum_{n\geq
  0}(-1)^n\symb{\Alt^n M}$,
mais outre que le morphisme de localisation correspondant n'est pas
injectif, on ne disposera plus ensuite dans le cas d'un corps global du morphisme de sp\'ecialisation en
presque toute place $\mfp$ (on a
$\Trp(\chil(\Zm^{-1}(M,1)))=\det(\Id-\Frp|M)$ et cette quantit\'e est
nulle si $M$ contient la
$\galabs{\kappa_{\mfp}}$ repr\'esentation triviale).
\end{rem}

Nous allons maintenant donner, dans le cas d'un corps de
caract\'eristique z\'ero $k$, un analogue motivique de
l'expression \eqref{eq:lm:ecr:L:class:bis}.
Notons $\bconjc{G}_{\cC,n}$
l'ensemble des \'el\'ements $\cD$ de $\bconjc{G}$ qui v\'erifient la
propri\'et\'e suivante~:
si  $D$ est un \'el\'ement de $\cD$, il existe un \'el\'ement $C$ de $\cC$
qui v\'erifie $C<D$ et $\card{C}=\frac{\card{D}}{\card{D}\wedge n}$
Soit
\begin{equation}\label{eq:def:eta}
\etak{k',G,\cC,n}
\eqdef
\veeu{
\cD\in \bconjc{G}_{\cC,n}
}
\frev{\Spec(k'),\Gop,\cD}.
\end{equation}
\begin{prop}\label{prop:trp:eta}
\begin{enumerate}
\item
Soit $K$ une extension pseudo-finie  de $k$, $C_K<G$ un groupe de d\'ecomposition de $K$ dans
l'extension $k\to k'$, et $n\geq 1$.  
Notons $K_n$ une extension de degr\'e $n$ de $K$.
Alors pour  tout $\cC\in \bconjc{G}$, on a
\begin{equation}
\etak{k',G,\cC,n}(K)
=
\left\{
\begin{array}{cl}
\{e\}&\text{s'il existe $C\in\cC$ tel que $C<C_K$ et }\card{C}=\frac{\card{C_K}}{n\wedge \card{C_K}}\\
\vide&\text{sinon.}
\end{array}
\right.
\end{equation}
En d'autres termes, $\etak{k',G,\cC,n}(K)=\{e\}$ si et seulement
si $K_n$ a pour d\'ecomposition $\cC$ dans l'extension $k'/k$. 
\item
\label{item:prop:trp:eta:spec}
Supposons que $k$ soit un corps de nombres. Il existe un ensemble fini
$S$ de places de $k$ tel que pour tout $\mfp\notin S$ et tout $n\geq 1$,
on ait, en notant $\cC_{\mfp}$ la classe des groupes de d\'ecomposition de $\mfp$
dans l'extension $k\to k'$,
\begin{equation}
\Trp\left[\chil\left(\etak{k',G,\cC,n}\right)\right]
=
\left\{
\begin{array}{cl}
1&\text{s'il existe }C\in \cC\text{ et }C_{\mfp}\in \cC_{\mfp}\text{
  tel que }C<C_{\mfp}\text{ et }\card{C}=\frac{\card{C_{\mfp}}}{n\wedge \card{C_{\mfp}}}\\
0&\text{sinon.}
\end{array}
\right.
\end{equation}
En d'autres termes, si on note $C_{\mfp}$ un \'el\'ement de $\cC_{\mfp}$
et $\kappa'_{\mfp}/\kappa_{\mfp}$ l'extension de corps r\'esiduels
correspondante, le membre de gauche de l'expression ci-dessus vaut
$1$ si l'extension $\kappa_{\mfp,n}/\kappa_{\mfp}$  a pour groupe de
d\'ecomposition dans $\kappa'_{\mfp}/\kappa_{\mfp}$ un \'el\'ement de $\cC$,
et vaut $0$ sinon.
\item
On a pour tout $\cC\in \bconjc{G}$ et tout $n\geq 1$
\begin{equation}\label{eq:psin=etak}
\psin{\scD,G,\{e\},\cC,n}
=\etak{k',G,\cC,n}\,.\,\psinx{\courbe}.
\end{equation}
\end{enumerate}
\end{prop}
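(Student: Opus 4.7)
La strat\'egie est d'interpr\'eter les diff\'erentes formules dans les $k$-extensions pseudo-finies, puis d'invoquer le th\'eor\`eme \ref{thm:denefloeser} pour transf\'erer les identifications obtenues aux motifs virtuels.

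Pour le premier point, on d\'eploie les d\'efinitions : l'interpr\'etation de $\frev{\Spec(k'),\Gop,\cD}$ dans une $k$-extension pseudo-finie $K$ vaut $\{e\}$ si et seulement si le groupe de d\'ecomposition $C_K$ de $K$ dans $k'/k$ est dans $\cD$, et $\vide$ sinon. Comme $\etak{k',G,\cC,n}$ est la disjonction sur $\cD \in \bconjc{G}_{\cC,n}$, son interpr\'etation vaut $\{e\}$ si et seulement si la classe de $C_K$ appartient \`a $\bconjc{G}_{\cC,n}$, soit, apr\`es conjugaison, s'il existe $C \in \cC$ avec $C < C_K$ et $\card{C} = \card{C_K}/(n \wedge \card{C_K})$. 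Pour la reformulation en termes de $K_n$, on utilise que $K$ \'etant pseudo-fini, $K_n \cap (K\cdot k')$ est l'unique sous-extension de $K\cdot k'/K$ de degr\'e $n \wedge \card{C_K}$ ; on en d\'eduit $[K_n \cdot k' : K_n] = \card{C_K}/(n \wedge \card{C_K})$, et comme $C_K$ est cyclique, le groupe de d\'ecomposition de $K_n$ dans $k'/k$ est son unique sous-groupe de cet ordre.

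Le deuxi\`eme point d\'ecoule alors du point \ref{item:nbre:pts:mod:p} du th\'eor\`eme \ref{thm:denefloeser}, qui donne pour presque tout $\mfp$ l'\'egalit\'e $\Trp(\chil(\etak{k',G,\cC,n})) = \card{\etak{k',G,\cC,n}(\kappa_{\mfp})}$ ; l'analyse du premier point se transpose aux corps r\'esiduels finis via l'identification du groupe de d\'ecomposition de $\mfp$ dans $k'/k$ avec la classe de $\langle \Frp\rangle$.

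Le troisi\`eme point constitue la partie principale du travail. On compare les interpr\'etations dans une $k$-extension pseudo-finie $K$ arbitraire. Par les remarques \ref{rem:psink} et \ref{rem:phixgidn}, $\psin{\scD,G,\{e\},\cC,n}(K)$ s'identifie \`a l'ensemble des points ferm\'es de degr\'e $n$ de $\courbe_K$ dont le groupe de d\'ecomposition dans le $G$-rev\^etement \'etale $\scD_K \to \courbe_K$ est dans $\cC$. L'observation cruciale est que, $K$ \'etant pseudo-fini, tout tel point ferm\'e a pour corps r\'esiduel $K_n$ ; de plus, comme $\scD = \courbe \times_k k'$ provient par changement de base d'une extension de corps, le groupe de d\'ecomposition d'un point ferm\'e de $\courbe$ dans $\scD \to \courbe$ co\"\i ncide avec celui de son corps r\'esiduel dans $k'/k$. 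Par suite tous les points ferm\'es de degr\'e $n$ de $\courbe_K$ ont m\^eme d\'ecomposition, \`a savoir celle de $K_n$ dans $k'/k$, et par le premier point, $\psin{\scD,G,\{e\},\cC,n}(K)$ vaut $\psinx{\courbe}(K)$ si $\etak{k',G,\cC,n}(K) = \{e\}$, et $\vide$ sinon. Ceci co\"\i ncide avec $\etak{k',G,\cC,n}(K) \times \psinx{\courbe}(K)$, la projection sur la seconde composante fournissant la bijection voulue. Cette bijection \'etant d\'efinissable uniform\'ement, on obtient l'\'egalit\'e dans $\kpff$, puis dans $\kovchmkq{\bQ}$ par application de $\chif$.

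Le point le plus d\'elicat est l'identification du groupe de d\'ecomposition d'un point ferm\'e dans $\scD \to \courbe$ avec la d\'ecomposition de son corps r\'esiduel dans $k'/k$, mais ceci r\'esulte directement de la nature purement arithm\'etique du rev\^etement, obtenu par changement de base. Le passage de la co\"\i ncidence point par point dans les extensions pseudo-finies \`a l'\'egalit\'e dans $\kpff$ ne pose quant \`a lui pas de difficult\'e r\'eelle, la bijection attendue \'etant la projection canonique et la condition de non-vacuit\'e \'etant encod\'ee par les formules galoisiennes $\frev{\Spec(k'),\Gop,\cD}$ elles-m\^emes.
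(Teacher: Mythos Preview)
Your proof is correct and follows essentially the same strategy as the paper's (quite terse) proof: unfolding definitions for the first point, invoking point~\ref{item:nbre:pts:mod:p} of Th\'eor\`eme~\ref{thm:denefloeser} for the second, and comparing interpretations in pseudo-finite extensions for the third. The only minor difference is that the paper explicitly reduces to the case where $\courbe$ is affine before comparing the two formulas $\psin{\scD,G,\{e\},\cC,n}$ and $\psinx{\courbe}\wedge \etak{k',G,\cC,n}$ as formulas on $\Sym^n(\courbe)$, a harmless technicality you implicitly assume when working at the level of formulas and their $K$-points.
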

\begin{proof}
Le premier point d\'ecoule  de la d\'efinition de $\etak{k',G,\cC,n}$ et des propri\'et\'es
classiques des groupes de d\'ecomposition.
Le deuxi\`eme point d\'ecoule de ces m\^emes d\'efinitions et propri\'et\'es,
ainsi que du point \ref{item:nbre:pts:mod:p}
du th\'eor\`eme \ref{thm:denefloeser}.

Pour montrer le troisi\`eme point, on se ram\`ene au cas o\`u $\courbe$ est
affine.
Alors, d'apr\`es le premier point, les formules $\psin{\scD,G,\cC,n}$
et $\psinx{\courbe}\wedge \etak{k',G,\cC,n}$ sont deux formules de
$\Sym^n(\courbe)$
dont les $K$-points co\"\i ncident pour toute $k$-extension
pseudo-finie $K$, d'o\`u le r\'esultat.
\end{proof}

D'apr\`es 
\eqref{eq:psin=etak}
le produit eulerien \eqref{eq:an:mot} se r\'e\'ecrit (formellement du moins)
\begin{equation}\label{eq:an:mot:bis}
\bL^{(1-g(\courbe))\,\dim(X)}
\,
\prod_{n\geq 1} 
\,
\prod_{\cC\in \bconjc{G}}
\,
\left[
\polcar_{\rhoNS,\{e\},\cC}(\bL^{-n})^{\,\etak{k',G,\cC,n}}
\,
\frac{\Phi_n(X)}{\bL^{n\,\dim(X)}}
\right]^{\psinx{\courbe}}.
\end{equation}
C'est sous cette forme que nous allons d\'efinir le volume de Tamagawa motivique en caract\'eristique non nulle.
Dans ce cas, il n'est malheureusement pas clair que cette derni\`ere forme soit
\'equivalente \`a \eqref{eq:an:mot}, i.e. que la relation
\eqref{eq:psin=etak} soit v\'erifi\'ee.
Si $k$ est un corps global de caract\'eristique nulle, l'analogue du point \ref{item:prop:trp:eta:spec}
de la proposition \ref{prop:trp:eta} est encore valide, d'apr\`es la remarque \ref{rem:car:non:nulle:bis}.
Si $k$ est fini, toujours d'apr\`es la remarque \ref{rem:car:non:nulle:bis}, 
l'analogue suivant du point \ref{item:prop:trp:eta:spec}
de la proposition \ref{prop:trp:eta}.
\begin{prop}\label{prop:trp:eta:spec:fin}
Supposons que  $k$ soit fini.
Alors pour tout $n\geq 1$, tout $m\geq 1$ et tout sous-groupe $C$
de $G$,
$\Tr(F_k^m|\chil(\etak{k',G,C,n}))$ vaut $1$
si $k_{m\,n}/k_m$ a pour groupe de d\'ecomposition $C$ dans
l'extension $k'\otimes_k k_m/k_m$ et $0$ sinon.
\end{prop}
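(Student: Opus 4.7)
Le plan est de suivre la stratégie de démonstration de la proposition \ref{prop:trp:eta}, en remplaçant l'appel au point \ref{item:nbre:pts:mod:p} du théorème \ref{thm:denefloeser} (valide seulement en caractéristique zéro) par sa variante pour les corps finis énoncée dans la seconde partie de la remarque \ref{rem:car:non:nulle:bis}.

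Je commencerai par noter que, puisque $k$ est fini, le groupe $G = \Gal(k'/k)$ est cyclique, donc tout sous-groupe $C$ s'identifie à sa propre classe de conjugaison $\{C\}$ dans $\bconjc{G}$. En développant la définition \eqref{eq:def:eta} de $\etak{k',G,C,n}$ comme disjonction, il viendra par linéarité
\begin{equation*}
\Tr(F_k^m | \chil(\etak{k',G,C,n})) = \sum_{\cD \in \bconjc{G}_{\{C\},n}} \Tr(F_k^m | \chil(\frev{\Spec(k'),\Gop,\cD})).
\end{equation*}
J'appliquerai alors la variante finie de la remarque \ref{rem:car:non:nulle:bis} à la $k$-$\Gop$-variété $\Spec(k')$ pour identifier chaque terme au nombre de $k_m$-points de $\Spec(k) = \Spec(k')/\Gop$ ayant pour décomposition $\cD$ dans le revêtement $\Spec(k') \to \Spec(k)$. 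Comme $\Spec(k)(k_m)$ est un singleton et que la décomposition associée à cet unique point est la classe du sous-groupe $C_m$ de décomposition de $k_m$ dans $k'/k$, la trace cherchée vaudra $1$ si et seulement si $C_m$ appartient à un élément de $\bconjc{G}_{\{C\},n}$, soit, par définition, $C < C_m$ et $\card{C} = \card{C_m}/\gcd(\card{C_m},n)$.

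Il restera à traduire cette condition en termes du groupe $\Gamma$ de décomposition de $k_{mn}/k_m$ dans $\Spec(k' \otimes_k k_m) \to \Spec(k_m)$. Pour cela j'identifierai $\Gamma$ au groupe de décomposition de $k_{mn}$ dans $k'/k$ (i.e. l'unique sous-groupe de $G$ d'ordre $[k':k]/\gcd([k':k],mn)$), puis vérifierai l'égalité arithmétique $[k':k]/\gcd([k':k],mn) = \card{C_m}/\gcd(\card{C_m},n)$, qui repose sur la coprimalité de $[k':k]/\gcd([k':k],m)$ et $m/\gcd([k':k],m)$. L'unicité du sous-groupe d'ordre donné dans le groupe cyclique $G$ fournira alors $C = \Gamma$, d'où le résultat. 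Je ne m'attends à aucune difficulté sérieuse : tout le contenu mathématique est concentré dans l'invocation de la remarque \ref{rem:car:non:nulle:bis} pour les corps finis, le reste n'étant que bookkeeping combinatoire et arithmétique des groupes cycliques.
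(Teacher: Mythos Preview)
Your proposal is correct and follows precisely the route the paper indicates: the paper's own ``proof'' is nothing more than the sentence preceding the proposition, which says the result is the analogue of point~\ref{item:prop:trp:eta:spec} of Proposition~\ref{prop:trp:eta} obtained \emph{via} the second part of Remark~\ref{rem:car:non:nulle:bis}. You have simply spelled out the details --- the reduction to a sum over $\cD\in\bconjc{G}_{\{C\},n}$ of traces on $\chil(\frev{\Spec(k'),\Gop,\cD})$, the identification of the unique nonzero term with the condition $C<C_m$ and $\card{C}=\card{C_m}/\gcd(\card{C_m},n)$, and the arithmetic verification that this matches the decomposition group of $k_{mn}$ --- all of which the paper leaves implicit.
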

Nous allons montrer que le produit eulerien motivique \eqref{eq:an:mot:bis}
converge effectivement dans une certaine compl\'etion de l'anneau
$\kochmkq{\bQ}$ et, 
dans le cas o\`u $k$ est un corps global, se
sp\'ecialise sur le volume de Tamagawa de $X_{\mfp}\times \courbe_{\mfp}/\courbe_{\mfp}$ pour presque tout $\mfp$.

\subsection{Topologie utilis\'ee} 
Soit $A$ et $B$ des anneaux et 
$
\varphi\,:\,A\longto B[u,u^{-1}]
$
un morphisme d'anneaux.
On d\'efinit une filtration d\'ecroissante de $A$ par des sous-groupes en posant pour $i\in \bZ$
$
\fil^{i}A=\{a\in A,\,\deg(\varphi(a))\leq -i\}.
$

\begin{nota}\label{nota:hatAphi}
On pose 
$
\hat{A}^{\varphi} 
\eqdef 
\underset{\longlto}{\lim}\,\,A /\fil^{i}A.
$
\end{nota}
On a 
$
\fil^{i}A\,.\,\fil^{j}A\subset \fil^{i+j}A
$
ce qui permet de munir 
$\hat{A}^{\varphi}$
d'une structure d'anneau. Le morphisme $\varphi$ s'\'etend alors en un morphisme d'anneaux
de $\hat{A}^{\varphi}$ vers l'anneau des s\'eries de Laurent \`a
coefficients dans $B$ i.e $\varphi\,:\,\hat{A}^{\varphi}\to
B((u^{-1}))$.

On a imm\'ediatemment le crit\`ere de convergence suivants.
\begin{lemme}\label{lm:crit:convergence}
On suppose que $A$ est une $\bQ$-alg\`ebre.
Soit $(a_n)_{n\geq 0}$ et $(b_n)_{n\geq 0}$ deux suites d'\'el\'ements
de $A$. 
On suppose qu'on a pour tout $n$
$\deg(\varphi(b_n))\geq 0$,
et
$\deg(\varphi(a_n))+\deg(\varphi(b_n))<0$,
et qu'on a 
$
\deg(\varphi(a_n))+\deg(\varphi(b_n))\longto -\infty$.
Alors le produit
$
\prod_{n\geq 0} (1+a_n)^{b_n}
$
converge dans $\hat{A}^{\varphi}$, et on a 
\begin{equation}
\varphi\left(\prod_{n\geq 0} (1+a_n)^{b_n}\right)
=
\prod_{n\geq 0} (1+\varphi(a_n))^{\varphi(b_n)}.
\end{equation}
\end{lemme}

Nous allons d\'efinir le volume de Tamagawa motivique  comme
un \'el\'ement de $\hml$ o\`u $\Poincl$ est le polyn\^ome de Poincar\'e virtuel
$\ell$-adique.
L'int\'er\^et d'utiliser la r\'ealisation $\ell$-adique est de pouvoir
sp\'ecialiser le r\'esultat dans le cas d'un corps global ou d'un corps
fini. On montrera que cette sp\'ecialisation donne bien le r\'esultat
attendu, \`a savoir le volume de Tamagawa classique. Ceci \'etant, on
aurait pu utiliser une autre cohomologie de Weil. Dans le cas d'une
surface, on peut m\^eme utiliser un polyn\^ome de Poincar\'e absolu, cf. la
section \ref{subsec:vrai:motif}.

\subsection{\'Enonc\'e du r\'esultat}

\begin{thm}\label{thm:princ}
Soit $k$ un corps, $\courbe$ une $k$-courbe projective, lisse et
g\'eom\'etriquement int\`egre et $X$ une $k$-vari\'et\'e projective, lisse,
g\'eom\'etriquement int\`egre et v\'erifiant les hypoth\`eses \ref{hyp:X}
et \ref{hyp:X:bis}.
Le produit eulerien motivique
\begin{equation}\label{eq:def:tam:mot}
\bL^{(1-g(\courbe))\,\dim(X)}
\,
\prod_{n\geq 1} 
\left[
\prod_{\cC\in \bconjc{G}}
\polcar_{\rhoNS,\{e\},\cC}(\bL^{-n})^{\etak{k',G,\cC,n}}\,\frac{\Phi_n(X)}{\bL^{n\,\dim(X)}}
\right]^{\psinx{\courbe}}
\end{equation}
converge dans $\hml$. 
On l'appelle \termin{volume de Tamagawa motivique  de $X\times \courbe$}, 
et on le note $\Volm(X\times \courbe/\courbe)$.
\end{thm}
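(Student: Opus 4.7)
The plan is to apply the convergence criterion of Lemma~\ref{lm:crit:convergence}. Set
\[
F_n\eqdef\prod_{\cC\in\bconjc{G}}\polcar_{\rhoNS,\{e\},\cC}(\bL^{-n})^{\etak{k',G,\cC,n}}\cdot\frac{\Phi_n(X)}{\bL^{n\dim X}},
\]
so the Euler product equals $\bL^{(1-g(\courbe))\dim X}\prod_{n\geq 1}F_n^{\psinx{\courbe}}$. M\"obius inverting~\eqref{eq:relpsinphin} and applying Proposition~\ref{prop:poinchPhi_d} to $\courbe$ gives $\deg_u(\Poincl(\psinx{\courbe}))=2n$. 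By Lemma~\ref{lm:crit:convergence} it then suffices to establish $\deg_u(\Poincl(F_n-1))\leq -3n$ for $n$ sufficiently large, for this will yield $\deg_u(\Poincl(F_n^{\psinx{\courbe}}-1))\leq-n\to-\infty$.

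Proposition~\ref{prop:poinchPhi_d} expresses $\Poincl(\Phi_n(X))$ as a sum supported at $u$-degrees $\{ni:0\leq i\leq 2\dim X\}$; after division by $\Poincl(\bL^{n\dim X})$ the support lies in $u$-degrees $\leq 0$, and the $i=2\dim X$ piece produces a unit at $u$-degree~$0$. Under Hypothesis~\ref{hyp:X}, Lemma~\ref{lm:csqces:hyp} yields $b_1(X)=0$, and then Poincar\'e duality forces $b_{2\dim X-1}(X)=0$. The first possible correction to $\Phi_n(X)/\bL^{n\dim X}-1$ therefore sits at $u$-degree $-2n$ and is computed from $H^{2\dim X-2}_\ell(X)$, which Poincar\'e duality combined with~\eqref{eq:picxbarbis} identifies with an appropriate Tate twist of the contragredient of $V_{\rhoNS}\otimes\Ql$. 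Denoting by $\psi^n$ the $n$-th Adams operation on $K_0(\gkql)$, one obtains that this correction is of the form $\psi^n(V_{\rhoNS})\cdot\bL^{-n}$ modulo terms of $u$-degree $\leq-3n$.

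On the other side, expanding each polynomial factor gives $\polcar_{\rhoNS,\{e\},\cC}(\bL^{-n})=1-\chi_{\rhoNS}(\cC)\,\bL^{-n}+O(\bL^{-2n})$, and since each $\etak{k',G,\cC,n}$ is a virtual Artin motif of non-positive $u$-degree, the binomial expansion yields
\[
\prod_{\cC\in\bconjc{G}}\polcar_{\rhoNS,\{e\},\cC}(\bL^{-n})^{\etak{k',G,\cC,n}}=1-\Bigl(\sum_{\cC}\chi_{\rhoNS}(\cC)\,\etak{k',G,\cC,n}\Bigr)\bL^{-n}+O(u^{-4n}).
\]
The crucial step is to show that the two $u^{-2n}$-contributions above cancel in $K_0(\gkql)$. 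This is the motivic counterpart of Peyre's classical cancellation (which produces the $q^{-3\deg y/2}$ error term in~\eqref{eq:cardscXy:bis}) and reduces, via Proposition~\ref{prop:poinchPhi_d} applied to the zero-dimensional motif $h(\Spec k')$ carrying $V_{\rhoNS}$, to the motivic identity
\[
\psi^n(V_{\rhoNS})=\sum_{\cC\in\bconjc{G}}\etak{k',G,\cC,n}\cdot\chi_{\rhoNS}(\cC)\qquad\text{in }K_0(\gkql).
\]
This identity lifts the character-theoretic relation $\Tr(F^n|V)=\sum_{C}\ind_{\{F^n\in C\}}\,\chi_V(C)$ and follows from the Artin-symbol interpretation of $\etak$ provided by Proposition~\ref{prop:trp:eta}(1), combined with a theorem of Artin reducing $\chi_{\rhoNS}$ to a $\bQ$-linear combination of characters induced from cyclic subgroups.

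Once the cancellation is in place, the residual terms of $F_n-1$ come either from $H^i_\ell(X)$ with $i\leq 2\dim X-3$ (at $u$-degree $\leq -3n$) or from second-order contributions in the various binomial expansions (at $u$-degree $\leq -4n$). Hence $\deg_u(\Poincl(F_n-1))\leq -3n$, and Lemma~\ref{lm:crit:convergence} yields convergence of the motivic Euler product in $\hml$. The main obstacle is thus the motivic cancellation identity itself: extracting the $u^{-2n}$-coefficients on both sides and showing they agree in $K_0(\gkql)$---rather than only after taking Frobenius traces in the arithmetic case---requires coupling Proposition~\ref{prop:poinchPhi_d}, the Galois-module hypothesis~\eqref{eq:picxbarbis}, and the Artin-theoretic description of $\etak$ afforded by Proposition~\ref{prop:trp:eta}.
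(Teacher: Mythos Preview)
Your overall architecture matches the paper's proof exactly: reduce to Lemma~\ref{lm:crit:convergence} using the bound $0\leq\deg_u\Poincl(\psinx{\courbe})\leq 2n$, expand $\Poincl(\Phi_n(X))$ via Proposition~\ref{prop:poinchPhi_d}, exploit $b_1(X)=0$ and Poincar\'e duality together with~\eqref{eq:picxbarbis} to isolate the $u^{-2n}$-term, expand the characteristic-polynomial factor similarly, and show the two $u^{-2n}$-contributions cancel so that $\deg_u(P_{X,\ell,n}(u)-1)\leq -3n$. The identity you formulate,
\[
\psi^n(V_{\rhoNS})=\sum_{\cC\in\bconjc{G}}\chi_{\rhoNS}(\cC)\,\chil(\etak{k',G,\cC,n}),
\]
is precisely Lemma~\ref{lm:eq:rel:trace:rho} (since $P_{\dim\rho,n}([\wedge^j V_\rho])=\psi^n(V_\rho)$), and this is indeed the heart of the argument.

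Where your proposal falls short is in the justification of that identity. You invoke Proposition~\ref{prop:trp:eta}(1) and ``a theorem of Artin''. Neither is the right tool. Proposition~\ref{prop:trp:eta}(1) describes $\etak{k',G,\cC,n}(K)$ for pseudo-finite extensions $K$; it is a statement about points, valid only in characteristic zero, and does not by itself pin down the class of $\etak{k',G,\cC,n}$ in $K_0(\gkql)$ or in $\koAMk{\bQ}$. Artin's induction theorem plays no role whatsoever here. Likewise, applying Proposition~\ref{prop:poinchPhi_d} to $h(\Spec k')$ does not yield the identity: that proposition concerns $\Poinc_H(\Phi_n(M))$, whereas what is needed is a direct computation of the virtual Artin motive $\sum_\cC\chi_\rho(\cC)\,\etak{k',G,\cC,n}$.

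The paper's actual proof of this identity (Lemma~\ref{lm:eq:rel:trace:rho}) goes through the definition~\eqref{eq:def:eta} of $\etak$ as a sum of Galois-stratification motives $\frev{\Spec(k'),\Gop,\cD}$, uses the relation $\chif(\frev{\Spec(k'),\Gop,\cD})=\chieq(\Spec(k'),\theta_{\cD})$ (Theorem~\ref{thm:denefloeser} in characteristic zero, Definitions~\ref{def:car:non:nulle} and~\ref{def:car:non:nulle:bis} otherwise), sums against $\chi_\rho(\cC)$ to obtain $\chieq(\Spec(k'),\chi_{\rhop}^n)$, and then applies Lemma~\ref{lm:chieqLchirho} to identify this with the class of the representation of character $\chi_\rho^n$, i.e.\ with $\psi^n(V_\rho)$. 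This is a purely representation-theoretic computation in $\koAMk{\bQ}$ and works uniformly over any base field---which is essential, since Theorem~\ref{thm:princ} is stated for arbitrary $k$.
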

Ce th\'eor\`eme sera d\'emontr\'e \`a la sous-section \ref{subsec:dem:thm:princ}.
\begin{rem}
Si $k$ est un corps quelconque, on peut d\'efinir un polyn\^ome de
Poincar\'e virtuel $\ell$-adique $\Poincl\,:\,\kovark\to \kogkql[u]$ qui
en caract\'eristique z\'ero se factorise par $\chiv$ (si $k$ est de type
fini, ceci provient de l'existence d'une
filtration par le poids sur les groupes de cohomologie $\ell$-adique \`a
support compact ; pour le cas g\'en\'eral cf. \cite{Eke:class}).
On peut alors se poser la question de 
la convergence de \eqref{eq:def:tam:mot} dans
$\wh{\cM_k\otimes \bQ}^{\Poincl}$ o\`u $\cM_k\eqdef\kovark[\bL^{-1}]$
(cf. les d\'efinitions \ref{def:phinv}, \ref{def:eq:relpsinphin}
et \ref{rem:car:non:nulle:bis}). 
En caract\'eristique z\'ero, compte tenu du fait que la situation 
\og se
factorise \fg~\`a travers $\chiv$, la r\'eponse \`a
cette question est positive. En caract\'eristique non nulle, en
l'absence d'analogue de $\chiv$, nous ne connaissons pas la r\'eponse. L'un
des probl\`emes qui se posent est qu'on n'est plus a priori assur\'e de
l'\'egalit\'e $\Poincl(\Zm(X,t))=\Poincl(\Zv(X,t))$, et donc de la validit\'e
de la relation \eqref{eq:poinch:phin} pour $\phinvar{X}$.

On peut \'egalement consid\'erer la convergence dans le compl\'et\'e
$\wh{\cM_k\otimes \bQ}^{\text{dim}}$
de $\cM_k\otimes \bQ$ pour la filtration dimensionnelle, utilis\'ee dans la th\'eorie
de l'int\'egration motivique (cf. \cite[\S 3.2]{DeLo:germs}). Comme une
vari\'et\'e de dimension $n$ a un polyn\^ome de Poincar\'e virtuel 
de
degr\'e $2\,n$, on a un morphisme continu naturel ${\wh{\cM_k\otimes \bQ}}^{\text{dim}}
\to {\wh{\cM_k\otimes \bQ}}^{\Poincl}$, mais nous ne savons pas s'il est injectif, et nous ignorons
si  \eqref{eq:def:tam:mot} converge dans ${\wh{\cM_k\otimes \bQ}}^{\text{dim}}$.
\end{rem}

\begin{defi}
Soit $k$ un corps global, et $\mfp$ une place finie
de $k$.
Un \'el\'ement de $\kogkqlq((u^{-1}))$
est dit 
\termin{pur en $\mfp$}
s'il s'\'ecrit
$
\sum_{n\leq n_0} a_n\,u^n
$
o\`u,  pour tout $n\leq n_0$, $a_n$ est une combinaison lin\'eaire d'\'el\'ements
de la forme $\symb{V}$ o\`u
les valeurs propres de $\Fr_{\mfp}$ agissant sur $V^{I_{\mfp}}$ 
sont des nombres alg\'ebriques dont tous les conjugu\'es complexes ont pour module $N(\mfp)^{\frac{n}{2}}$.
Il est dit \termin{pur} s'il est pur en presque tout $\mfp$ 
(en particulier l'image par $\Poincl $ d'un \'el\'ement de $\kochmk{\bQ}$ 
est pur).
\end{defi}

\begin{thm}\label{thm:princ:spe}
On conserve les hypoth\`ese du th\'eor\`eme \ref{thm:princ}
et on suppose en outre que $k$ est un corps global. 
Alors, pour presque toute place finie $\mfp$,  
$\Poincl(\Volm(X\times \courbe/\courbe))$ est pur en $\mfp$,
et la s\'erie
\begin{equation}
\Trp\left[\Poincl (\Volm(X\times \courbe/\courbe))\right]\in \bC[[u^{-1}]]
\end{equation}
converge absolument en $u=-1$ vers le volume de Tamagawa  
$\Vol(X_{\mfp}\times \courbe_{\mfp}/\courbe_{\mfp})$
d\'efini par le produit eulerien \eqref{eq:def:tam:clas}.
\end{thm}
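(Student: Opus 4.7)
Le plan consiste \`a appliquer la sp\'ecialisation $\Trp\circ\Poincl$, suivie de l'\'evaluation en $u=-1$, facteur par facteur au produit eul\'erien motivique \eqref{eq:def:tam:mot}, et \`a v\'erifier que l'on retrouve terme \`a terme l'expression classique \eqref{eq:lm:ecr:L:class:bis} du volume de Tamagawa de la r\'eduction $X_\mfp\times\courbe_\mfp/\courbe_\mfp$. L'\'echange de la sp\'ecialisation et du produit infini d\'ecoulera de la continuit\'e de $\Poincl$ sur $\hml$ combin\'ee \`a la convergence absolue du produit classique due \`a Peyre.

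\textbf{Sp\'ecialisation des briques \'el\'ementaires.} Pour presque toute $\mfp$ (celles en lesquelles $X$, $\courbe$ et le rev\^etement $\scD\to\courbe$ ont bonne r\'eduction et o\`u les divers r\'esultats de sp\'ecialisation s'appliquent simultan\'ement), je v\'erifierai~:
\begin{itemize}
\item $\bL\mapsto \card{\kappa_\mfp}\,u^2$, d'o\`u $\bL^{-1}\mapsto \card{\kappa_\mfp}^{-1}$ en $u=-1$ ;
\item $\Phi_n(X)\mapsto \card{X_\mfp(\kappa_{\mfp,n})}$ en $u=-1$, en combinant la formule explicite de la proposition \ref{prop:poinchPhi_d}, la remarque \ref{rem:trfpdimvm:trfn} et la formule des traces de Grothendieck-Lefschetz ;
\item $\etak{k',G,\cC,n}\mapsto 1$ ou $0$ selon que la classe de d\'ecomposition de $\kappa_{\mfp,n}/\kappa_\mfp$ dans $\kappa'_\mfp/\kappa_\mfp$ co\"\i ncide avec $\cC$ ou non, par la proposition \ref{prop:trp:eta}(\ref{item:prop:trp:eta:spec}) ;
\item $\psinx{\courbe}\mapsto \card{(\courbe_\mfp)^{(0)}_n}$, par le corollaire \ref{cor:prop:spec:psin} ;
\item $\polcar_{\rhoNS,\{e\},\cC}(\bL^{-n})\mapsto \polcar_{\rhoNS,\{e\},\cC}(\card{\kappa_\mfp}^{-n})$.
\end{itemize}
En multipliant ces sp\'ecialisations au niveau du $n$-\`eme facteur et en utilisant l'identit\'e \eqref{eq:courbe:0:n:I:C} pour relier la combinaison $\etak{}\cdot\card{(\courbe_\mfp)^{(0)}_n}$ au cardinal $\card{(\scD_\mfp)^{(0)}_{G,\{e\},C,n}}$ apparaissant classiquement, on retrouve exactement la contribution de degr\'e $n$ \`a l'expression \eqref{eq:lm:ecr:L:class:bis} du volume de Tamagawa de la r\'eduction.

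\textbf{Commutation et convergence absolue.} L'application de $\Poincl$ au produit convergent dans $\hml$ donne, par continuit\'e, un produit convergent dans $\kogkql((u^{-1}))\otimes\bQ$~; l'application de $\Trp$ coefficient par coefficient donne alors une s\'erie de $\bC((u^{-1}))$ dont les facteurs locaux sont, d'apr\`es ce qui pr\'ec\`ede, pr\'ecis\'ement ceux du produit classique \eqref{eq:prodeul}. La convergence absolue en $u=-1$ d\'ecoule donc de l'argumentation de Peyre rappel\'ee entre \eqref{eq:picxbar:bis} et \eqref{eq:prodeul}, fond\'ee sur l'estim\'ee $1+\bigo{}{\card{\kappa_\mfp}^{-\frac{3}{2}\deg y}}$ issue de la puret\'e de Deligne et de l'annulation de $b_1(X)$ (lemme \ref{lm:csqces:hyp}).

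\textbf{Puret\'e et obstacle principal.} La puret\'e en $\mfp$ se ram\`ene \`a celle des briques \'el\'ementaires~: chacune provient d'un motif de Chow, dont la r\'ealisation $\ell$-adique est pure par le th\'eor\`eme de Deligne en toute place de bonne r\'eduction~; les op\'erations intervenant dans le produit eul\'erien ainsi que la compl\'etion d\'efinissant $\hml$ pr\'eservent cette puret\'e d\`es que la filtration par le degr\'e du polyn\^ome de Poincar\'e co\"\i ncide avec la graduation par le poids. \emph{L'obstacle principal} r\'eside dans la justification rigoureuse de l'\'echange entre la limite du produit infini et l'\'evaluation en $u=-1$~: il faut promouvoir les estim\'ees classiques de d\'ecroissance en $\card{\kappa_\mfp}^{-\frac{3}{2}\deg y}$ en estim\'ees \emph{motiviques} uniformes sur le degr\'e de $\Poincl$ des facteurs du produit, de sorte que la convergence dans $\hml$ (via le crit\`ere \ref{lm:crit:convergence}) se traduise, apr\`es $\Trp$, par la convergence absolue en $u=-1$ et non seulement par la convergence formelle en $u^{-1}$~; ceci se fait en exploitant soigneusement la formule explicite de la proposition \ref{prop:poinchPhi_d} ainsi que l'isomorphisme entre $\Pic(\clo{X})\otimes\Ql$ et $H^2_\ell(X)\otimes\Ql(1)$ fourni par \eqref{eq:picxbarbis}.
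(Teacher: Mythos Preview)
Your outline follows the paper's approach: specialize each building block of the motivic Euler product via $\Trp\circ\Poincl$, identify the result with the classical local factors, and justify the passage to $u=-1$. You correctly flag the ``obstacle principal'' as the heart of the matter; the paper resolves it through an explicit technical lemma (lemma~\ref{lm:thm:princ:spe}) giving uniform bounds of the shape $|\gamma_{\mfp,r}|\leq C\,r\,(1+b(X))^r$ on the coefficients of $\Trp\bigl[\sum_{n}\Poincl(\psi_n(\courbe))\log P_{X,\ell,n}\bigr]$, obtained from proposition~\ref{prop:poinchPhi_d}, the purity of Frobenius eigenvalues, and the key cancellation relation~\eqref{eq:rel:trace}. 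These bounds give absolute convergence for $|u|>(1+b(X))\,N(\mfp)^{-1/2}$, hence at $u=-1$ once the finitely many $\mfp$ with $N(\mfp)\leq(1+b(X))^2$ are excluded.

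One point to sharpen: your paragraph ``Commutation et convergence absolue'' asserts that after applying $\Trp$ one obtains a series whose ``facteurs locaux'' are those of the classical product~\eqref{eq:prodeul}, and that absolute convergence at $u=-1$ then follows from Peyre's argument. This shortcut is not valid as stated: the object $\Trp[\Poincl(\Volm)]$ is a Laurent series in $u^{-1}$, not an Euler product indexed by $\courbe_\mfp^{(0)}$, and Peyre's estimate on $\prod_y\bigl(1+O(q^{-\frac{3}{2}\deg y})\bigr)$ does not by itself control the coefficients of this series. The explicit uniform bounds are genuinely required, and only \emph{after} establishing convergence at $u=-1$ can one identify the sum with the classical product via the specializations you list (together with corollary~\ref{cor:prop:spec:psin} and proposition~\ref{prop:trp:eta}). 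Your ``obstacle principal'' paragraph gets this right; treat it as the actual argument and discard the preceding shortcut.
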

Ce th\'eor\`eme sera d\'emontr\'e \`a la sous-section \ref{subsec:dem:thm:princ:spe}.
\begin{thm}\label{thm:princ:spe:fin}
On conserve les hypoth\`ese du th\'eor\`eme \ref{thm:princ}
et on suppose en outre que $k$ est un corps fini. 
Alors pour tout entier $m$ v\'erifiant
\begin{equation}\label{eq:cond:bete}
m>\frac{1}{2}\log_q(1+\Sup_i b_i(X))
\end{equation}
la s\'erie
\begin{equation}
\Tr\left[F_k^m|\Poincl(\Volm(X\times \courbe/\courbe))\right]\in \bC[[u^{-1}]]
\end{equation}
converge absolument en $u=-1$ vers le volume de Tamagawa  
$\Vol(X_{k_m}\times \courbe_{k_m}/\courbe_{k_m})$
d\'efini par le produit eulerien \eqref{eq:def:tam:clas}.
\end{thm}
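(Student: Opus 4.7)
The plan is to mimic the proof of Theorem~\ref{thm:princ:spe}, with the single Frobenius $F_k^m$ playing the role of the entire family $(\Frp)_{\mfp \notin S}$ of local Frobenius used in the global setting; the quantitative hypothesis~\eqref{eq:cond:bete} replaces the freedom to send $N(\mfp) \to \infty$.

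First, apply $\Poincl$ to the motivic Euler product~\eqref{eq:def:tam:mot}, which converges in $\hml$ by Theorem~\ref{thm:princ}, to obtain an element of $\kogkqlq((u^{-1}))$. Applying $\Tr(F_k^m|\,\cdot\,)$ yields a continuous ring morphism to $\bC((u^{-1}))$, since Frobenius traces are multiplicative on tensor products of Galois representations; the motivic infinite product thus becomes a genuine infinite product of complex Laurent series. Proposition~\ref{prop:poinchPhi_d:mot} together with Remark~\ref{rem:trfpdimvm:trfn} and the Grothendieck--Lefschetz trace formula give, at $u = -1$, the identities $\Tr(F_k^m|\Poincl(\Phi_n(X)))_{u=-1} = |X(k_{mn})|$ and $\Tr(F_k^m|\Poincl(\bL^{n\dim X}))_{u=-1} = q^{mn\dim X}$. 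By Remark~\ref{rem:car:non:nulle:bis} applied to the galoisian formula defining $\psinx{\courbe}$, $\Tr(F_k^m|\chil(\psinx{\courbe}))$ is the number of closed points of degree $n$ of $\courbe_{k_m}$; Proposition~\ref{prop:trp:eta:spec:fin} converts $\Tr(F_k^m|\chil(\etak{k',G,\cC,n}))$ into the indicator of the condition that $k_{mn}/k_m$ has decomposition class $\cC$ in $\scD_{k_m}/\courbe_{k_m}$. Combining these three specializations factor by factor, and rearranging as in the passage from~\eqref{eq:lm:ecr:L:class:triv:1} to~\eqref{eq:lm:ecr:L:class:bis}, the specialization agrees formally with~\eqref{eq:def:tam:clas} applied to the constant family $X_{k_m} \times \courbe_{k_m}/\courbe_{k_m}$.

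The hard part is upgrading this formal identity into absolute convergence. The motivic product converges in $\hml$ thanks to a cancellation, at order $\bL^{-n}$, between the Picard contribution to $\Phi_n(X)/\bL^{n\dim X} - 1$ and the leading correction of the $\polcar_{\rhoNS,\{e\},\cC}(\bL^{-n})^{\etak{k',G,\cC,n}}$ factors; this cancellation survives the application of $\Tr(F_k^m|\,\cdot\,)$ and forces each classical Euler factor to be of the form $1 + O(q^{-3mn/2})$, exactly as in the argument surrounding~\eqref{eq:cardscXy:bis}. Together with the Lang--Weil estimate $O(q^{mn}/n)$ on the number of closed points of degree $n$ of $\courbe_{k_m}$, this gives absolute convergence of the resulting complex Euler product at $u=-1$ for any $m \geq 1$. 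The extra hypothesis~\eqref{eq:cond:bete}, equivalent to $q^{2m} > 1 + \sup_i b_i(X)$, is what is required to control the whole Laurent expansion in $u^{-1}$, not merely its formal evaluation at $u = -1$: via Deligne's bound $|\alpha| = q^{i/2}$ on the Frobenius eigenvalues on $H^i(\overline{X}, \Ql)$ combined with the explicit formula of Proposition~\ref{prop:poinchPhi_d:mot}, it ensures that each local factor $\Tr(F_k^m|\Poincl(\text{stuff}_n))(u)$ stays uniformly nonzero on the closed disc $|u^{-1}| \leq 1$, so that the exponentiation by the Laurent polynomial $\Tr(F_k^m|\Poincl(\psinx{\courbe}))(u)$ is well-defined and continuous at $u = -1$, and the termwise specialization of the Laurent series coincides with the classical Tamagawa volume.
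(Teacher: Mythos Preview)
Your proposal is essentially correct and follows the same route as the paper. The paper's own proof consists of stating Lemma~\ref{lm:thm:princ:spe:fin}, the finite-field analogue of Lemma~\ref{lm:thm:princ:spe}, with explicit bounds $|\alpha_{n,r}|\leq 2(\dim X+b_2(X))(1+b(X))^r/r$ on the coefficients of $\Tr(F_k^m|\log P_{X,\ell,n})$ and similar bounds for $\Tr(F_k^m|\Poincl(\psinx{\courbe}))$, then observing that the resulting Laurent series in $u^{-1}$ converges absolutely for $|u|>(1+b(X))\,q^{-m/2}$; the hypothesis~\eqref{eq:cond:bete} is exactly what places $u=-1$ inside this domain. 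Your sketch recovers the same structure, only phrased more qualitatively.

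Two small inaccuracies are worth flagging. First, in positive characteristic $\psinx{\courbe}$ is \emph{not} defined via a galoisian formula but via the M\"obius inversion of~\eqref{eq:relpsinphin} (Definition~\ref{def:eq:relpsinphin}); so your appeal to Remark~\ref{rem:car:non:nulle:bis} for $\psinx{\courbe}$ is misplaced. The correct route, which the paper uses in point~\ref{item:lm:thm:princ:spe:psin:courbe:fin} of Lemma~\ref{lm:thm:princ:spe:fin}, is to compute $\Tr(F_k^m|\Poincl(\Phi_d(\courbe)))$ directly from Proposition~\ref{prop:poinchPhi_d:mot} and then M\"obius-invert. Second, your phrasing ``stays uniformly nonzero on the closed disc $|u^{-1}|\leq 1$'' is not quite the mechanism: what the bound on $m$ really buys is that the geometric series expansion of each $\log(\Tr(F_k^m|P_{X,\ell,n}))$ converges on that disc, so that the full Laurent series (not just its formal value at $u=-1$) is absolutely convergent there. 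This is precisely what the coefficient bounds in Lemma~\ref{lm:thm:princ:spe:fin} make explicit.
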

Ce th\'eor\`eme sera d\'emontr\'e \`a la sous-section \ref{subsec:dem:thm:princ:spe:fin}.
\begin{rem}\label{rem:princ:spe:fin}
La condition \eqref{eq:cond:bete} semble bien entendu artificielle, et
on aimerait  s'en d\'ebarrasser. Elle serait superflue si par
exemple $\Poincl(\Volm(X\times \courbe/\courbe))$ \'etait \`a croissance
polyn\^omiale born\'ee au sens de \cite[\S 2]{Eke:class}, mais nous ne
savons pas si cette propri\'et\'e est v\'erifi\'ee.
\end{rem}

\subsection{Quelques lemmes}
\begin{lemme}\label{lm:chieqLchirho}
Soit $k'$ une extension galoisienne finie de $k$, de groupe $G$.
Soit $\rho$ une $\bQ$-repr\'esentation de $G$.
Alors $\chieq(\Spec(k'),\chi_{\rhop})$ (cf. le th\'eor\`eme \ref{thm:chiveq} 
en caract\'eristique nulle et la
d\'efinition \ref{rem:car:non:nulle} en caract\'eristique non nulle) 
co\" \i ncide avec la classe de $V_{\rho}$ dans $\koAMk{\bQ}$.
\end{lemme}
\begin{proof}
D'apr\`es le point \ref{item:thm:chiveq:projlisse} du th\'eor\`eme
\ref{thm:chiveq} en caract\'eristique nulle et la remarque
\ref{rem:car:non:nulle}
en caract\'eristique non nulle, 
via l'\'equivalence de cat\'egories \eqref{eq:cat:artin},
$\chieq(\Spec(k'),\chi_{\rhop})$ s'identifie \`a la classe dans
$\koAMk{\bQ}$
de l'image du projecteur de $V_{\rho}\otimes \bQ[G]$
donn\'e par
\begin{equation}
\frac{1}{\card{G}}\sum_{g\in \Gop}\rhop(g^{-1})\otimes g^{\ast}
=
\frac{1}{\card{G}}\sum_{g\in G}\rho(g)\otimes \rho_d(g).
\end{equation}
D'apr\`es le lemme \ref{lm:well:known}, cette image est isomorphe \`a $V_{\rho}$.
\end{proof}

\begin{lemme}\label{lm:chiltrrho}
Soit $k'$ une extension galoisienne finie de $k$, de groupe $G$.
Soit $\rho_0$ une $\bQ$-repr\'esentation de $G$.
On a dans $\kochmk{\bQ}_{\bQ}$ la relation 
\begin{equation}
\sum_{\cC\in \bconjc{G}}
\chi_{\rho_0}(\cC)\,\frev{\Spec(k'),\Gop,\cC}
=\symb{V_{\rho_0}}.
\end{equation}
\end{lemme}
\begin{proof}
On peut supposer $\rho_0$ irr\'eductible. Soit $\cC\in \bconjc{G}$. 
On rappelle que $\theta_\cC$ est la fonction qui \`a $g\in G$ associe
$1$ si le groupe engendr\'e par $g$ est dans $\cC$ et $0$ sinon.
C'est une fonction $\bQ$-centrale \`a valeurs dans $\bQ$.
Il existe donc des \'el\'ements $m_{\rho,\cC}\in \bQ$
tels que  
$
\theta_\cC=\sum_{\rho\in \irrq{\Gop}} m_{\rho,\cC}\,\chi_{\rho}$. 
On a alors d'apr\`es les th\'eor\`emes \ref{thm:chiveq} et
\ref{thm:denefloeser} en caract\'eristique nulle et les d\'efinition \ref{def:car:non:nulle}
 et \ref{def:car:non:nulle:bis} en
caract\'eristique non nulle
\begin{equation}\label{eq:phiLGC:sumrho}
\frev{\Spec(k'),\Gop,\cC}=\sum_{\rho\in \irrq{\Gop}} m_{\rho,\cC}\,\chieq(\Spec(k'),\chi_{\rho}).
\end{equation}
soit d'apr\`es le lemme \ref{lm:chieqLchirho}
\begin{equation}\label{eq:phiLGC:sumrho:bis}
\frev{\Spec(k'),\Gop,\cC}=\sum_{\rho\in \irrq{\Gop}} m_{\rho,\cC}\,\symb{V_{\rhop}}.
\end{equation}
Le lemme d\'ecoule alors  des relations d'orthogonalit\'e
\eqref{eq:orth}. 
\end{proof}

\begin{lemme}\label{lm:eq:rel:trace:rho}
Soit $k'$ une extension galoisienne finie de $k$, de groupe $G$.
Pour tout $n\geq 1$ et toute $\bQ$-repr\'esentation $\rho$ de $G$ ,
on a dans $\koAMk{\bQ}\otimes \bQ$ la relation
\begin{equation}\label{eq:rel:trace:rho}
\sum_{\cC\in \bconjc{G}}\chi_{\rho}(\cC)\,\chil(\etak{k',G,\cC,n})=
P_{\dim(\rho),n}\left(\symb{\wedgeo{j} V_{\rho}}\right)_{1\leq
  j\leq \dim(\rho)}
\end{equation}
\end{lemme}
\begin{proof}
Pour $\cC\in \bconjc{G}$ on note $\theta_{\cC,n}$ la fonction
$\bQ$-centrale qui \`a $g\in G$ associe $1$ si le groupe engendr\'e par $g^n$ est dans $\cC$
et $0$ sinon. En particulier, on a les relations
$
\sumu{\cC\in \bconjc{G}}
\chi_{\rho}(\cC)\,\theta_{\cC,n}=\chi_{\rho}^n
$
et
$
\theta_{\cC,n}
=\sumu{
\cD\in \bconjc{G}_{\cC,n}
}
\theta_\cD.
$
On a donc
\begin{align}
\sum_{\cC\in \bconjc{G}}\chi_{\rho}(\cC)\,\etak{k',G,\cC,n}
&
=
\sum_{\cC\in \bconjc{G}}\chi_{\rho}(\cC)\,
\sum_{
\cD\in \bconjc{G}_{\cC,n}
}
\frev{\Spec(k'),G^{\text{op}},\cD}
\\
&
=
\sum_{\cC\in \bconjc{G}}\chi_{\rho}(\cC)\,
\sum_{
\cD\in \bconjc{G}_{\cC,n}
}
\chieq(\Spec(k'),\theta_{\cD})
\\
&
=
\sum_{\cC\in \bconjc{G}}\chi_{\rho}(\cC)\,
\chieq(\Spec(k'),\theta_{\cC,n})
\\
&
=
\chieq(\Spec(k'),\chi_{\rhop}^n).
\end{align}
Par ailleurs, il d\'ecoule de la remarque
\ref{rem:trfpdimvm:trfn}
que l'\'el\'ement de $K_0\left(\GQvect\right)$ 
donn\'e par $P_{\dim(\rho),n}\left(\symb{\wedgeo{j} V_\rho}\right)$
a pour caract\`ere $\chi_{\rho}^n$. D'apr\`es le lemme \ref{lm:chieqLchirho},
il est donc \'egal \`a 
$
\chieq(\Spec(k'),\chi_{\rho}^n)
$.

\end{proof}

\subsection{D\'emonstration du th\'eor\`eme \ref{thm:princ}}
\label{subsec:dem:thm:princ}
Notons $\ell$ la classe de $\Ql(-1)$ dans $\kogkql$.
Pour $n\geq 1$, soit $P_{X,\ell,n}(u)$
l'\'el\'ement de $1+\kogkql[[u^{-1}]]^+$
d\'efini par
\begin{align}
P_{X,\ell,n}(u)
&
\eqdef
\Poincl 
\left(
\prod_{\cC\in \bconjc{G}}
\polcar_{\rhoNS,\{e\},\cC}(\bL^{-n})^{\,\etak{k',G,\cC,n}}\,\frac{\Phi_n(X)}{\bL^{n\,\dim(X)}}
\right)
\\
&=
\prod_{\cC\in \bconjc{G}}
\polcar_{\rhoNS,\{e\},\cC}(\ell^{-n}\,u^{-2\,n})^{\,\chil(\etak{k',G,\cC,n})}\,
\frac{\Poincl(\Phi_n(X))}
{\ell^{n\,\dim(X)}\,u^{\,2\,n\,\dim(X)}}.
\end{align}
Il d\'ecoule de la relation \eqref{eq:poinch:phin} qu'on a
$
\deg(\Poinc_H(\Phi_n(\courbe)))=2\,n.
$
Ainsi, d'apr\`es la relation \eqref{eq:relpsinphin}
et la remarque \ref{rm:eq:relpsinphin}, on a pour tout $n$  
\begin{equation}
0 \leq \deg(\Poincl (\psinx{\courbe})\leq 2\,n.
\end{equation}
En vertu du lemme \ref{lm:crit:convergence}, il suffit 
donc
pour \'etablir la convergence dans $\hml$ du produit
eulerien motivique \eqref{eq:def:tam:mot} 
de montrer qu'on a 
\begin{equation}\label{eq:degpnlx}
\deg_u(P_{X,\ell,n}(u)-1)\leq -3\,n.
\end{equation}
D'apr\`es la proposition \ref{prop:poinchPhi_d} et le fait que $b_1(X)$
est nul, 
il existe un polyn\^ome $Q_0$ \`a coefficients dans $\kogkql$ v\'erifiant
\begin{align}
\Poincl (\Phi_n(X))&=\ell^{\,n\,\dim X}\,u^{\,2\,n\,\dim(X)}
+
P_{b_2(X),n}\left(\classe{\wedgeo{j} H_{\ell}^{2\,\dim(X)-2}(X)}\right)\,u^{\,n\,(2\,\dim(X)-2)}
\notag\\
&\phantom{=}+u^{n\,(2\,\dim(X)-3)}\,Q_0(u^{-1}).
\end{align}
D'apr\`es \eqref{eq:picxbarbis} et la 
dualit\'e de Poincar\'e, on a l'\'egalit\'e
\begin{equation}
\classe{H_{\ell}^{2\,\dim(X)-2}(X)}=
\classe{\Pic(X)^{\vee}}\,\ell^{\,\dim(X)-1}.
\end{equation}
Compte tenu du fait qu'une $\bQ$-repr\'esentation est isomorphe \`a sa duale,
on en tire pour tout $j$ la relation
\begin{equation}
\classe{\wedgeo{j} H_{\ell}^{2\,\dim(X)-2}(X)}
= \ell^{\,j\,(\dim(X)-1)}\,\classe{\wedgeo{j} \Pic(X)}
\end{equation}
et finalement
\begin{equation}
P_{b_2(X),n}\left(\wedgeo{j} H_{\ell}^{2\,\dim(X)-2}(X)\right)
=\ell^{\,n\,(\dim(X)-1)}\,P_{b_2(X),n}\left(\classe{\wedgeo{j} \Pic(X)}\right).
\end{equation}

Ainsi, on a
\begin{align}
\frac{\Poincl (\Phi_n(X))}
{\ell^{n\,\dim(X)}\,u^{\,2\,n\,\dim(X)}}
&
= 
1
+
\ell^{\,-n}\,
P_{b_2(X),n}
\left(\classe{\wedgeo{j} \Pic(X)}
\right)\,u^{\,-2\,n}
\notag\\
&
\phantom{=}+u^{-3\,n}\,\ell^{\,-n\,\dim(X)}\,Q_0(u^{-1}).
\label{eq:poincellphin}
\end{align}

Par ailleurs, il existe  un \'el\'ement $Q_1$ de $\kogkqlq[[u]]$
tel qu'on ait
\begin{multline}
\prod_{\cC\in \bconjc{G}}
\polcar_{\rhoNS,\{e\},\cC}(\ell^{-n}\,u^{-2\,n})^{\,\chil(\etak{k',G,\cC,n})}\,
\\
=
1-\ell^{-n}\,\sum_{\cC\in \bconjc{G}}\chi_{\rho}(\cC)\,\chil(\etak{k',G,\cC,n})\,u^{-2\,n}
+u^{-4\,n}\,Q_1(u^{-1}).
\label{eq:prod:polcar}
\end{multline}
D'apr\`es la relation \eqref{eq:rel:trace:rho} appliqu\'ee \`a $\rhoNS$, on a pour tout
$n$ la relation
\begin{equation}\label{eq:rel:trace}
\sum_{\cC\in \bconjc{G}}\chi_{\rhoNS}(\cC)\,\chil(\etak{k',G,C,n})=
P_{b_2(X),n}
\left(\symb{\wedgeo{j} \Pic(X)}
\right)_{1\leq
  j\leq b_2(X)}.
\end{equation}
De \eqref{eq:poincellphin}, \eqref{eq:prod:polcar}
et \eqref{eq:rel:trace} on d\'eduit l'in\'egalit\'e \ref{eq:degpnlx},
et donc le th\'eor\`eme \ref{thm:princ}.
\subsection{D\'emonstration du th\'eor\`eme \ref{thm:princ:spe}}
\label{subsec:dem:thm:princ:spe}
\begin{notas}
Pour toute place $\mfp$ de $k$ non ramifi\'ee dans l'extension $k'/k$,
notons $\cC_{\mfp}$ la classe des groupes de d\'ecomposition de $\mfp$ dans
l'extension $k'/k$, et pour $n\geq 1$, $\cC_{\mfp,n}$ l'unique
classe de $\bconjc{G}$ tel qu'il existe $C\in \cC_{\mfp,n}$ et
$C_{\mfp}\in \cC_{\mfp}$ 
v\'erifiant
$C<C_{\mfp}$ et $\card{C}=\frac{\card{C_{\mfp}}}{n\wedge \card{C_{\mfp}}}$.
\end{notas}
\begin{lemme}\label{lm:thm:princ:spe}
Soit $b(X)$ le plus grand nombre de Betti de $X$.
Il existe un ensemble fini
$S$ de places finies de $k$ (d\'ependant de $X$ et de $\courbe$),
tel que pour tout $\mfp\notin S$ 
on a les
propri\'et\'es suivantes :
\begin{enumerate}
\item
Pour tout $n\geq 1$,
$\Poincl (\Phi_n(X))$, 
$\Poincl (\psinx{\courbe})$ 
  et $P_{X,\ell,n}(u)$ sont purs en  $\mfp$.
\item
$\Poincl(\Volm(X\times \courbe))$ est pur en $\mfp$.
\item
Pour tout $n\geq 1$, il existe des nombres alg\'ebriques
$(\alpha_{\mfp,n,r})_{r\geq 3}$ v\'erifiant
\begin{equation}\label{eq:trplogpnl:maj}
\forall r\geq 3,\quad \abs{\alpha_{\mfp,n,r}}\leq \frac{2\,(\dim(X)+b_2(X))\,(1+b(X))^r}{r}
\end{equation}
et
\begin{equation}\label{eq:trplogpnl}
\Trp\left(\log(P_{X,\ell,n}(u))\right)
=
\sum_{r\geq 3} \alpha_{\mfp,n,r} N(\mfp)^{-\frac{n\,r}{2}}\,u^{\,-n\,r}.
\end{equation}
\item
\label{item:lm:thm:princ:spe:psin:courbe}
Pour tout $n\geq 1$, il existe des nombres alg\'ebriques
$(\beta_{\mfp,n,r})_{0\leq r\leq 2\,n}$
v\'erifiant
\begin{equation}
\abs{\beta_{\mfp,n,r}}\leq \frac{b_1(\courbe)+1}{n}
\end{equation}
et 
\begin{equation}
\Trp\left(\Poincl (\psinx{\courbe})\right)
=
\sum_{0\leq r \leq 2\,n} 
\beta_{\mfp,n,r}\,N(\mfp)^{\,\frac{r}{2}}\,u^r.
\end{equation}
\item
Il existe des nombres alg\'ebriques $(\gamma_{\mfp,r})_{r\geq 1}$
v\'erifiant
\begin{equation}
\abs{\gamma_{\mfp,r}}\leq 6\,r\,(b_1(\courbe)+1)\,(\dim(X)+b_2(X))\,(1+b(X))^r
\end{equation}
et
\begin{equation}
\Trp\left[\sum_{n\geq 1} \Poincl (\psinx{\courbe})\,\log(P_{X,\ell,n})\right]
=1+\sum_{r \geq 1} \gamma_{\mfp,r}\,N(\mfp)^{-\frac{r}{2}}\,u^{-r}.
\end{equation}
\item
La s\'erie enti\`ere
$
\Trp(\Poincl[\Volm(X\times\courbe)])\in \bC[[u^{-1}]]
$
converge absolument pour tout $u=z\in \bC$ v\'erifiant
\begin{equation}
\abs{z}> (1+b(X))\,N(\mfp)^{-\frac{1}{2}}
\end{equation}
et sa somme vaut alors
\begin{equation}
\exp\left[\sum_{n\geq 1}\Trp[\Poincl(\psinx{\courbe})]_{u=z}\,
\log
\left(
\polcar_{\rhoNS,\cC_{\mfp,n}}(N(\mfp)^{-n}\,z^{-2\,n})
\frac{\Trp[\Poincl(\Phi_n(X))]_{u=z}}
{N(\mfp)^{n\,\dim(X)}\,z^{\,2\,n\,\dim(X)}}
\right)
\right]
.
\end{equation}
\end{enumerate}
\end{lemme}
\begin{proof}
Notons $S$ la r\'eunion des places finies $\mfp$ qui v\'erifient l'une des conditions
suivantes :
\begin{enumerate}
\item
$\mfp$ appartient \`a l'ensemble fini du point \ref{item:prop:trp:eta:spec}
de la proposition \ref{prop:trp:eta} ;
\item
il existe  $\cC\in \bconjc{G}$ tel que  
$\chil\left(\frev{\Spec(k'),\Gop,\cC}\right)=\Poincl \left(\frev{\Spec(k'),\Gop,\cC}\right)$
 n'est pas pur en $\mfp$ ;
\item
$\Poincl (X)$ n'est pas pur en $\mfp$ ;
\item
$\Poincl (\courbe)$ n'est pas pur en $\mfp$ ;
\item
$\mfp$ est ramifi\'e dans l'extension $k'/k$ ;
\item
$\mfp$ divise $\ell$.
\end{enumerate}
Pour $\mfp\notin S$ et $n\geq 1$, la relation \eqref{eq:def:eta}
montre  que $\chil(\etak{k',G,\cC,n})$ est pur en $\mfp$.
Par ailleurs, pour $\mfp\notin S$ et $n\geq 1$, 
la proposition \ref{prop:poinchPhi_d} montre que 
$\Poincl(\Phi_n(X))$  et
$\Poincl(\Phi_n(\courbe))$ sont pur en $\mfp$.
La relation \eqref{eq:relpsinphin} montre alors que 
$\Poincl(\psinx{\courbe})$ est pur en $\mfp$.
On en d\'eduit que $P_{X,\ell,n}(u)$ est pur en $\mfp$, puis que 
$\Poincl(\Volm(X\times \courbe))$ est pur en $\mfp$.

Soit $\mfp\notin S$. 
Posons $v=N(\mfp)^{-\frac{1}{2}}\,u^{-1}$.
Soit $n\geq 1$.  
Comme $\Poincl (\Phi_n(X))$ est pur en $\mfp$, on a 
\begin{align}
\Trp\left(\frac{\Poincl (\Phi_n(X))}{\ell^{\,n\,\dim(X)}\,u^{\,2\,n\,\dim(X)}}
\right)\hskip-0.3\textwidth&
\\
&=
\sum_{0\leq r\leq 2\,\dim(X)}
\Tr(\Fr_{\mfp}^n|H^{2\,\dim(X)-r}(X)) \,N(\mfp)^{\,-n\,\frac{2\,\dim(X)-r}{2}}\,v^{\,n\,r}
\\
&=
\sum_{0\leq r\leq 2\,\dim(X)}
a_{\mfp,n,r}\,v^{\,n\,r}\label{eq:poinclphin:sum:apnr}
\end{align}
o\`u les~$a_{\mfp,n,r}$ sont des  nombres alg\'ebriques v\'erifiant
$a_{\mfp,n,r}=a_{\mfp,n,2\,\dim(X)-r}$
et 
\begin{equation}
\forall \,0\leq r\leq 2\,\dim(X), \quad 
\abs{\alpha_{\mfp,n,r}}\leq b(X).
\end{equation}

Il d\'ecoule de la proposition \ref{prop:trp:eta}
et de \eqref{eq:poinclphin:sum:apnr}
que l'on a 
\begin{equation}
\Trp(P_{X,\ell,n}(u))
=
\polcar_{\rhoNS,\cC_{\mfp,n}}(N(\mfp)^{-n}\,u^{-2\,n})\,
\left(
\sum_{0\leq r\leq 2\,\dim(X)}
a_{\mfp,n,r}\,v^{\,n\,r}
\right).
\end{equation}
On voit ainsi que
$
\Trp(P_{X,\ell,n}(u))
$
s'\'ecrit 
$
Q_{1,\mfp,n}(v^n)\,Q_{2,\mfp,n}(v^n)
$
o\`u $Q_{1,\mfp,n}$ est un polyn\^ome de degr\'e $2\,b_2(X)$ dont les
racines sont de module $1$ 
et $Q_{2,\mfp,n}$ est un polyn\^ome r\'eciproque de degr\'e $2\,\dim(X)$ dont les
racines sont de module inf\'erieur \`a $1+b(X)$.
On en d\'eduit \eqref{eq:trplogpnl} et \eqref{eq:trplogpnl:maj}.

Montrons le point \ref{item:lm:thm:princ:spe:psin:courbe}.
D'apr\`es la proposition \ref{prop:poinchPhi_d:mot} et la remarque
\ref{rem:trfpdimvm:trfn}, 
on a pour tout $d\geq 1$
\begin{equation}
\Tr\left(\Fr_{\mfp}|\Poincl (\Phi_d(\courbe))\right)=1+(-1)^{d+1}\,\Tr(\Fr_{\mfp}^d|\Hl^1(\courbe))\,u^d+N(\mfp)^{d}\,u^{2\,d}.
\end{equation}
Par ailleurs, comme $\Poincl (\Phi_d(\courbe))$ est pur en $\mfp$  on a 
\begin{equation}
\forall d\geq 1,\quad \abs{\Tr(\Fr_{\mfp}^n|\Hl^1(\courbe))}\,N(\mfp)^{-\frac{d}{2}}\leq b_1(\courbe).
\end{equation}
D'apr\`es la relation \eqref{eq:relpsinphin}, on a
\begin{equation}
\forall n\geq 1,\quad
\psinx{\courbe}=\frac{1}{n} \sum_{d|n} \mu\left(\frac{d}{n}\right)\,\Phi_d(\courbe),
\end{equation}
o\`u $\mu\,:\,\bN\to \{0,1,-1\}$ est la fonction de M\"obius.
On a donc pour tout $n\geq 1$
\begin{align}
\Trp\left(\Poincl (\psinx{\courbe})\right)
\hskip-0.2\textwidth&\notag\\
&=\frac{1}{n}
\sum_{\substack{d|n \\ d\text{ impair}}} 
\,(-1)^{d+1}\,\mu\left(\frac{n}{d}\right) \Tr(\Fr_{\mfp}^d|\Hl^1(\courbe))\,N(\mfp)^{-\frac{d}{2}}\,v^d
\notag\\
&\phantom{=}
+
\frac{1}{n}
\sum_{\substack{d|n \\ d\text{ pair}}}
\,\left[(-1)^{d+1}\,\mu\left(\frac{d}{n}\right)
  \Tr(\Fr_{\mfp}^d|\Hl^1(\courbe))\,N(\mfp)^{-\frac{d}{2}}
+\mu\left(\frac{2\,n}{d}\right)
\right]
\,v^d
\end{align}
d'o\`u le r\'esultat annonc\'e. Les deux derniers points en d\'ecoulent ais\'ement.
\end{proof}

Montrons \`a pr\'esent le th\'eor\`eme \ref{thm:princ:spe}.
Tout d'abord, d'apr\`es le lemme \ref{lm:iso:pic:pic}, il existe un
ensemble fini $S'$ de places de $k$ tel que pour  toute place
$\mfp\notin S'$, on a un isomorpisme
\begin{equation}
\Pic(\clo{X})\isom \Pic(\clo{X_{\mfp}})
\end{equation}
compatible aux actions de $\galabs{k}$ \`a gauche et
$\galabs{\kappa_{\mfp}}$ \`a droite.
Pour $\mfp\notin S'$, soit $G_{\mfp}$ un groupe de d\'ecomposition de
$\mfp$ dans l'extension $k'/k$, $\rhoNSp$ la $\bQ$-repr\'esentation de
$G_{\mfp}$ induite par l'action de $\galabs{\kappa_{\mfp}}$ sur
$\Pic(\clo{X})$, $\kappa'_p$ l'extension galoisienne de
groupe $G_{\mfp}$ de $\kappa_{\mfp}$ et $\scD(\mfp)\eqdef
\courbe_{\mfp}\times_{\kappa_p}\kappa'_p$.
Le volume de Tamagawa  de la famille
$X_{\mfp}\times \courbe_{\mfp}/\courbe_{\mfp}$ est donc \'egal \`a
\begin{equation}\label{eq:vol:tam:naif:1}
N(\mfp)^{\,(1-g(\courbe))\,\dim(X)}\,
\prod_{n\geq 1} \,
\prod_{
\substack{
C<G_{\mfp}
}
}
\polcar_{\rhoNSp,\{e\},C}(N(\mfp)^{-n})^{\,\card{\scD(\mfp)^{(0)}_{G_{\mfp},\{e\},C,n}}}
\left(
\frac{\card{X(\kappa_{\mfp,n})}}{N(\mfp)^{\,n\,\dim(X)}}
\right)
^{\card{(\courbe_{\mfp})^{(0)}_{n}}}
\end{equation}
Comme $\scD_{\mfp}=(\courbe\times_k k')_{\mfp}$ est isomorphe \`a la r\'eunion disjointe de
$G/G_{\mfp}$ copies de $\scD(\mfp)$, par compatibilit\'e \`a la
restriction des fonctions $L$ d'Artin classique 
on a l'\'egalit\'e
\begin{equation}
\Vol(X_{\mfp}\times \courbe_{\mfp}/\courbe_{\mfp})
=
N(\mfp)^{\,(1-g(\courbe))\,\dim(X)}\,
\prod_{n\geq 1} \,
\prod_{
\substack{
C<G
}
}
\polcar_{\rhoNS,\{e\},C}(N(\mfp)^{-n})^{\,\card{(\scD_{\mfp})^{(0)}_{G,\{e\},C,n}}}
\left(
\frac{\card{X(\kappa_{\mfp,_n})}}{N(\mfp)^{\,n\,\dim(X)}}
\right)
^{\card{(\courbe_{\mfp})^{(0)}_{n}}}
\end{equation}
D'apr\`es la remarque \ref{rem:triv}, on a donc
\begin{equation}\label{eq:vol:tam:naif:0}
\Vol(X_{\mfp}\times \courbe_{\mfp}/\courbe_{\mfp})
=
N(\mfp)^{\,(1-g(\courbe))\,\dim(X)}\,
\prod_{n\geq 1} \,
\left[\polcar_{\rhoNS,\cC_{\mfp,n}}(N(\mfp)^{-n})
\frac{\card{X(\kappa_{\mfp,_n})}}{N(\mfp)^{\,n\,\dim(X)}}
\right]
^{\card{(\courbe_{\mfp})^{(0)}_{n}}}.
\end{equation}

Soit $S''$ l'ensemble fini de places de $k$ constitu\'e de la r\'eunion de
l'ensemble $S$ du lemme \ref{lm:thm:princ:spe}, des places $\mfp$
v\'erifiant $N(\mfp)\leq (1+b(X))^2$
et de l'ensemble $S'$ introduit ci-dessus.
Il d\'ecoule alors  du lemme  \ref{lm:thm:princ:spe} que pour tout
$\mfp\notin S''$ la s\'erie $\Poincl\left(\Volm(X\times \courbe)\right)$
est pur en $\mfp$ et que la s\'erie enti\`ere
\begin{equation}
N(\mfp)^{\,(g(\courbe-1))\,\dim(X)}\,
\Trp\left[\Poincl \left(\Volm(X\times
    \courbe)\right)\right]\in\bC[[u^{-1}]]
\end{equation}
converge absolument en $u=-1$  vers
\begin{multline}\label{eq:somme}
\exp\left[\sum_{n\geq 1}\Trp[\Poincl(\psinx{\courbe})]_{u=-1}\,
\log
\left(
\polcar_{\rhoNS,\{e\},\cC_{\mfp\!,\!n}}(N(\mfp)^{-n})
\frac{\Trp[\Poincl(\Phi_n(X))]_{u=-1}}
{N(\mfp)^{n\,\dim(X)}}
\right)
\right]
\\
=
\exp\left[\sum_{n\geq 1}\Trp[\chil(\psinx{\courbe})]\,
\log
\left(
\polcar_{\rhoNS,\{e\},\cC_{\mfp\!,\!n}}(N(\mfp)^{-n})
\frac{\Trp[\chil(\Phi_n(X))]}
{N(\mfp)^{n\,\dim(X)}}
\right)
\right]
.
\end{multline}

Mais d'apr\`es le corollaire \ref{cor:prop:spec:psin}, on a 
$
\Trp[\chil\left(\psi_n(\courbe)\right)]=\card{\left(\courbe_{\mfp}\right)_n^{(0)}}
$
et
$\Trp[\chil\left(\Phi_n(X)\right)]=\card{X_{\mfp}(\kappa_{\mfp,n})}$.
On en d\'eduit que l'expression \eqref{eq:somme}  co\"\i ncide bien 
avec
$N(\mfp)^{\,(g(\courbe)-1)\,\dim(X)}\,\Vol(X_{\mfp}\times \courbe_{\mfp}/\courbe_{\mfp})$.

\subsection{D\'emonstration du th\'eor\`eme \ref{thm:princ:spe:fin}}
\label{subsec:dem:thm:princ:spe:fin}
\begin{notas}
Pour tout entier $m\geq 1$, 
notons $C_{m}$ le groupe de d\'ecomposition de $k_m/k$ dans
l'extension $k'/k$, et pour $n\geq 1$, $C_{m,n}$ le
sous-groupe de $C_{m}$ 
v\'erifiant $\card{C_{m,n}}=\frac{\card{C_{m}}}{n\wedge \card{C_{m}}}$.
\end{notas}
\begin{lemme}\label{lm:thm:princ:spe:fin}
Soit $b(X)$ le plus grand nombre de Betti de $X$.
On a alors :
\begin{enumerate}
\item
Pour tout $n\geq 1$, il existe des nombres alg\'ebriques
$(\alpha_{n,r})_{r\geq 3}$ v\'erifiant
\begin{equation}\label{eq:trplogpnl:maj:fin}
\forall r\geq 3,\quad \abs{\alpha_{n,r}}\leq \frac{2\,(\dim(X)+b_2(X))\,(1+b(X))^r}{r}
\end{equation}
et pour tout $m\geq 1$
\begin{equation}\label{eq:trplogpnl:fin}
\Tr\left(F_k^{m}|\log(P_{X,\ell,n}(u))\right)
=
\sum_{r\geq 3} \alpha_{n,r} q^{-\frac{n\,m\,r}{2}}\,u^{\,-n\,r}.
\end{equation}
\item
\label{item:lm:thm:princ:spe:psin:courbe:fin}
Pour tout $n\geq 1$, il existe des nombres alg\'ebriques
$(\beta_{n,r})_{0\leq r\leq 2\,n}$
v\'erifiant
\begin{equation}
\abs{\beta_{n,r}}\leq \frac{b_1(\courbe)+1}{n}
\end{equation}
et pour tout $m\geq 1$
\begin{equation}
\Tr\left(F_k^m|\Poincl (\psinx{\courbe})\right)
=
\sum_{0\leq r \leq 2\,n} 
\beta_{n,r}\,q^{\,\frac{m\,r}{2}}\,u^r.
\end{equation}
\item
Il existe des nombres alg\'ebriques $(\gamma_{r})_{r\geq 1}$
v\'erifiant
\begin{equation}
\abs{\gamma_{r}}\leq 6\,r\,(b_1(\courbe)+1)\,(\dim(X)+b_2(X))\,(1+b(X))^r
\end{equation}
et pour tout $m\geq 1$
\begin{equation}
\Tr\left[F^m_k|\sum_{n\geq 1} \Poincl (\psinx{\courbe})\,\log(P_{X,\ell,n})\right]
=1+\sum_{r \geq 1} \gamma_{r}\,q^{-\frac{m\,r}{2}}\,u^{-r}.
\end{equation}
\item
Pour $m\geq 1$, la s\'erie enti\`ere
$
\Tr(\Poincl[F_k^m|\Volm(X\times\courbe)])\in \bC[[u^{-1}]]
$
converge absolument pour tout $u=z\in \bC$ v\'erifiant
\begin{equation}
\abs{z}> (1+b(X))\,q^{-\frac{m}{2}}
\end{equation}
et sa somme vaut alors
\begin{equation}
\exp\left[\sum_{n\geq 1}\Tr[F_k^m|\Poincl(\psinx{\courbe})]_{u=z}\,
\log
\left(
\polcar_{\rhoNS,\{e\},C_{m,n}}(q^{-n\,m}\,z^{-2\,n})
\frac{\Tr[F_k^m|\Poincl(\Phi_n(X))]_{u=z}}
{q^{n\,m\,\dim(X)}\,z^{\,2\,n\,\dim(X)}}
\right)
\right]
.
\end{equation}
\end{enumerate}
\end{lemme}
La d\'emonstration de ce lemme est tr\`es similaire \`a celle du lemme
\ref{lm:thm:princ:spe}.
Par un raisonnement analogue \`a celui de la section
\ref{subsec:dem:thm:princ:spe}, on en d\'eduit le th\'eor\`eme \ref{thm:princ:spe:fin}.

\subsection{Lien conjectural avec la fonction z\^eta des hauteurs antica\-noniques}\label{subsec:lien:conj}
On se place sous les hypoth\`eses du th\'eor\`eme \ref{thm:princ}.
On suppose en outre que le c\^one effectif de $\clo{X}$ est finiment engendr\'e
et que la classe du faisceau anticanonique de $X$ est
\`a l'int\'erieur du c\^one effectif.
Ces hypoth\`eses permettent de d\'efinir un invariant rationnel $\alpha^{\ast}(X)$
(cf. \cite[\S 3.1]{Pey:var_drap}).
On d\'efinit par ailleurs l'invariant $\beta(X)$ comme \'etant le cardinal du groupe $H^1(k,\Pic(\clo{X}))$.
Si $K$ d\'esigne le corps des fonctions de $\courbe$, on suppose
en outre que l'ensemble $X(K)$ est Zariski dense.
Par souci de simplification, on supposera \'egalement qu'on a
\begin{equation}
\Max \{d\in \bN_{>0},\quad \frac{1}{d}\classe{\omega_X^{-1}}\in \Pic(X)\}=1.
\end{equation}

Supposons tout d'abord que $k$ soit un corps fini de cardinal $q$.
Pour $U$ ouvert de Zariski de $X$ assez petit, on peut alors
consid\'erer la fonction z\^eta des hauteurs
anticanonique $\ZH(X\times \courbe/\courbe,U,t)$ : c'est la s\'erie
g\'en\'eratrice qui compte le nombre de morphismes de $\courbe$ vers $X$
de degr\'e anticanonique donn\'e dont l'image n'est pas incluse dans le
compl\'ementaire de $U$.
Voici une version de la conjecture de Manin dans ce cadre.
\begin{question}\label{ques:manin}
On suppose que $X\times \courbe/\courbe$ v\'erifie l'approximation
faible (par exemple que $X$ est rationnelle).
Est-il vrai que pour un ouvert $U$ assez petit, la s\'erie enti\`ere $\ZH(X\times \courbe/\courbe,U,t)$
a un rayon de convergence \'egal \`a $q^{-1}$ et que pour un certain
$\eps>0$ sa somme se prolonge en une fonction m\'eromorphe sur
$\{\abs{t}<q^{-1+\eps}\}$, admettant un p\^ole d'ordre $\rg(\Pic(X))$ en
$t=q^{-1}$  tel que
\begin{multline}
\lim_{t\to q^{-1}}(1-qt)^{\rg(\Pic(X))}\ZH(X\times\courbe/\courbe,U,t)
\\
=\alpha^{\ast}(X)\,\beta(X)\,\left[(1-q\,t)^{\rg(\Pic(X))}\,\Lar(\scD,G,\rhoNS,t)\right]_{t=q^{-1}}\,\Vol(X\times \courbe/\courbe).
\end{multline}
\end{question}
Nous donnons ci-dessous un pendant motivique de la version affaiblie
suivante de la question \ref{ques:manin}.
\begin{question}\label{ques:manin:bis}
On suppose que $X\times \courbe/\courbe$ v\'erifie l'approximation faible.
Est-il vrai que pour un ouvert $U$ assez petit, la s\'erie 
\begin{equation}\label{eq:dettimeszh}
\det(\Id-F_k\,t|\Pic(\clo{X}))\,\det(\Id-q\,F_k\,t|\Pic(\clo{X}))\,\ZH(X\times \courbe/\courbe,t)
\end{equation}
converge en $t=q^{-1}$ vers 
\begin{equation}
\alpha^{\ast}(X)\,\beta(X)\,\left[\det(\Id-F_k\,t|\Pic(\clo{X}))\,\det(\Id-q\,F_k\,t|\Pic(\clo{X}))\,\Lar(\scD,G,\rhoNS,t)\right]_{t=q^{-1}}\,
\Vol(X\times \courbe/\courbe)\quad ?
\end{equation}
\end{question}
Revenons au cas d'un corps $k$ quelconque.
Pour $U$ ouvert de Zariski de $X$ assez petit, on peut alors
consid\'erer la fonction z\^eta des hauteurs
anticanonique g\'eom\'etrique $\ZHv(X\times \courbe/\courbe,U,t)$ : c'est une s\'erie formelle
dont les coefficients sont les classes dans $\kovark$ des espaces de
modules param\'etrant les morphismes de $\courbe$ vers $X$
de degr\'e anticanonique donn\'e dont l'image n'est pas incluse dans le
compl\'ementaire de $U$. Lorsque $k$ est un corps fini, le morphisme 
\og nombre de points \fg~envoie $\ZHv$ sur $\ZH$.
Si $k$ est de caract\'eristique z\'ero, on peut consid\'erer la fonction
z\^eta des hauteurs motiviques $\ZHm\eqdef \chiv(\ZHv)$.
Par analogie avec la question \ref{ques:manin:bis}, on peut alors poser la question suivante.
\begin{question}\label{ques:manin:mot}
Supposons $k$ de caract\'eristique z\'ero.
Est-il vrai que pour un ouvert $U$ assez petit la s\'erie
\begin{equation}
\Zm(\Pic(\clo{X}),t)^{-1}\,\Zm(\Pic(\clo{X}),t)^{-1}\,\ZHm(X\times \courbe/\courbe,U,t)
\end{equation}
converge dans $\hml$ en $t=\bL^{-1}$ vers 
\begin{equation}
\alpha(X)\,\beta(X)\,
\left[
\Zm(\Pic(\clo{X}),t)^{-1}\,\Zm(\Pic(\clo{X}),\bL\,t)^{-1}
\Lm(\scD,G,\rhoNS,t)\right]_{t=\bL^{-1}}\,
\Volm(X\times \courbe/\courbe)\quad ?
\end{equation}
\end{question}
Les arguments d\'evelopp\'es dans \cite{Bou:prod:eul:mot}
montrent que la r\'eponse \`a cette question est positive dans le cas o\`u
$X$ est une vari\'et\'e torique d\'eploy\'ee et $\courbe=\bP^1$.
\begin{rem}\label{rem:enonce:question}
On pourrait imaginer renforcer la question \ref{ques:manin:bis} en demandant
en outre la convergence de la s\'erie \eqref{eq:dettimeszh} en $t=q^{-1+\eps}$
pour $\eps>0$ assez petit. Ceci aurait deux avantages :
d'une part une telle convergence impliquerait une r\'eponse positive \`a
la question \ref{ques:manin}, d'autre part l'adaptation au cadre
motivique serait ais\'ee (quitte \`a introduire formellement des racines
de $\bL$ dans l'anneau de Grothendieck des motifs).
Cependant une r\'eponse positive \`a la question ainsi reformul\'ee
entra\^\i nerait en outre que les p\^oles
de la fonction z\^eta des hauteurs sur le cercle de rayon $q^{-1}$ 
sont inclus dans l'ensemble $\{\alpha^{-1}\,q^{-1}\}$, $\alpha$
d\'ecrivant les valeurs propres de $F_k$ sur $\Pic(\clo{X})$.
Ceci n'est pas v\'erifi\'e par exemple dans le cas du plan projectif
\'eclat\'e en un point (o\`u $q^{-1}$ n'est pas l'unique p\^ole du prolongement m\'eromorphe sur le cercle de rayon
$q^{-1}$).
La question de la nature
des p\^oles qui doivent appara\^\i tre sur le cercle de rayon $q^{-1}$ reste
\`a \'etudier.
\end{rem}

Supposons \`a pr\'esent que $k$ soit un corps de nombres
et indiquons comment les questions \ref{ques:manin:bis} et
\ref{ques:manin:mot} pourraient \^etre reli\'ees.
On suppose qu'il existe  un ouvert $U$ tel que la r\'eponse \`a la question \ref{ques:manin:mot} soit positive.
Notons, pour all\'eger l'\'ecriture,
\begin{equation}
\wt{\ZHm}(t)\eqdef\Zm(\Pic(\clo{X}),t)^{-1}\,\Zm(\Pic(\clo{X}),\bL\,t)^{-1}\,
\ZHm(X\times \courbe/\courbe,U,t)\in \mk[[t]]
\end{equation}
et
\begin{equation}
\wt{\Lm}(t)\eqdef\Zm(\Pic(\clo{X}),t)^{-1}\,\Zm(\Pic(\clo{X}),\bL\,t)^{-1}\,
\Lm(\scD,G,\rhoNS,t)\in \mk[[t]].
\end{equation}
Ainsi, d'apr\`es le point \ref{item:3:prop:lmdmcourbe} de la proposition 
\ref{prop:lmdmcourbe}, $\wt{\Lm}(t)$ est un polyn\^ome.

Supposons en outre que pour presque tout $\mfp$
on ait
$\Trp(\chil(\wt{\ZHm}(t))=\wt{\ZHp}(t)$ o\`u
\begin{equation} 
\wt{\ZHp}(t)\eqdef
\det(\Id-\Fr_{\mfp}\,t|\Pic(\clo{X_{\mfp}}))\,\det(\Id-N(\mfp)\,\Fr_{\mfp}\,t|\Pic(\clo{X_{\mfp}}))
\,
\ZH(X_{\mfp}\times
\courbe_{\mfp}/\courbe_{\mfp},U_{\mfp},t)\in \bC[[t]].
\end{equation}
D'apr\`es le th\'eor\`eme \ref{thm:princ:spe}, on est dans la situation
suivante :
\begin{equation}
\xymatrix{
\underset{\in\kogkqlq[u,u^{-1}][[t]]}{\Poincl(\wt{\ZHm}(t))}
\ar@{|->}[rrr]^>>>>>>>>>>>>>>>{t=\ell^{-1}u^{-2}}
\ar@{|->}[d]^{\Trp}
&
&
&
\underset{
\in \kogkqlq((u^{-1}))
}
{
\alpha(X)\beta(X)
\wt{\Lm}(\ell^{-1}\,u^{-2})
\Volm(X\times \courbe/\courbe)
}
\ar@{|->}[d]^{\Trp}
\\
\underset{\in\bC[u,u^{-1}][[t]]}{\Trp(\,.\,)}
\ar@{|->}[rrr]^>>>>>>>>>>>>>>>>>>>>>>>>>>>>>>>>{t=N(\mfp)^{-1}u^{-2}}\ar@{|->}[d]^{u=-1}
&
&
&
\underset{\in\bC((u^{-1}))}{\Trp(\,.\,)}
\ar@{|->}[d]^{u=-1}
\\
\underset{\in \bC[[t]]}
{\wt{\ZHp}(t)}\ar@{|.>}[rrr]^>>>>>>>>>>>>>>>{t=N(\mfp)^{-1}\,?}
&
&
&
\alpha^{\ast}(X)\beta(X)
\wt{\Lar}(N(\mfp)^{-1})
\Vol(X_{\mfp}\times \courbe_{\mfp}/\courbe_{\mfp})
}
\end{equation}
Si on arrive \`a montrer que la fl\`eche horizontale inf\'erieure est bien d\'efinie et fait 
\og commuter \fg le carr\'e inf\'erieur, 
on obtient que la  r\'eponse \`a la question \ref{ques:manin:bis} est
positive en $\mfp$. 
Concr\`etement, on est ramen\'e \`a un probl\`eme d'interversion de s\'erie.
Il s'agirait alors de d\'egager les propri\'et\'es de $\wt{\ZHm}$ assurant
que cette interversion est licite.

\subsection{Une vraie version motivique}\label{subsec:vrai:motif}
L'appelation \og motivique \fg pour le volume de Tamagawa dont
l'existence est montr\'ee par le th\'eor\`eme \ref{thm:princ} 
est abusive au vu de la compl\'etion utilis\'ee.
Il devrait plut\^ot \^etre qualifi\'e de \og cohomologique\fg.
Si on admet la conjecture que tout motif de Chow admet une
d\'ecomposition de Chow-K\"unneth (cf. \cite{Mur:conj}), on peut d\'efinir un polyn\^ome de Poincar\'e
virtuel \og absolu\fg
\begin{equation}
\Poincabs\,:\,\map{\kochmk{\bQ}}{\kochmk{\bQ}[u,u^{-1}]}{\symb{M}}{\sumu{i\in \bZ} \symb{h^i(M)}\,u^i}
\end{equation}
et on peut se demander si la convergence du nombre de Tamagawa
motivique a lieu dans~$\hmabs$ (et pas seulement
dans une compl\'etion li\'ee \`a une r\'ealisation cohomologique). En fait on peut montrer un
r\'esultat de convergence inconditionnel pour les surfaces : soit $\mathscr{S}_k$ la
sous-cat\'egorie pleine de~$\chmk{\bQ}$ dont les objets sont les motifs
d\'ecoup\'es sur les vari\'et\'es de dimension au plus $2$, leurs sommes et
leurs duaux.
Comme les vari\'et\'es de dimension au plus $2$ admettent des
d\'ecomposition de Chow-K\"unneth (cf. \cite{Mur:surf}), on peut
d\'efinir un polyn\^ome de Poincar\'e virtuel absolu
\begin{equation}
\Poincabs\,:\,\map{K_0(\mathscr{S}_k)}{K_0(\mathscr{S}_k)[u,u^{-1}]}{\symb{M}}{\sumu{i\in \bZ} \symb{h^i(M)}\,u^i}
\end{equation}
\begin{thm}\label{thm:princ:mot}
Soit $k$ un corps de caract\'eristique z\'ero.
Soit $\courbe$ une courbe projective, lisse et g\'eom\'etriquement int\`egre.
Soit $S$ une surface projective, lisse et g\'eom\'etriquement int\`egre
v\'erifiant les hypoth\`eses \ref{hyp:X}. 
On suppose en outre que $A_0(S_{k(S)})$ est nul.
On reprend les notations \ref{notas:ecD}.
Le produit eulerien motivique
\begin{equation}\label{eq:def:tam:mot:surf}
\bL^{\,2\,(1-g(\courbe))}
\,
\prod_{n\geq 1} 
\left[
\prod_{\cC\in \bconjc{G}}
\polcar_{\rhoNS,\{e\},\cC}(\bL^{-n})^{\etak{k',G,\cC,n}}\,\frac{\Phi_n(S)}{\bL^{2\,n}}
\right]^{\psinx{\courbe}}
\end{equation}
converge dans $\wh{K_0(\mathscr{S}_k)\otimes \bQ}^{\,\,\Poincabs}$ (cf. la notation \ref{nota:hatAphi}).  
\end{thm}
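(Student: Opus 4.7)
The plan is to mimic the proof of Theorem \ref{thm:princ}, replacing the $\ell$-adic Poincaré polynomial $\Poincl$ by the absolute one $\Poincabs$ defined on $K_0(\mathscr{S}_k)$; since surfaces and curves admit Chow--Künneth decompositions by work of Murre, this definition makes sense throughout the relevant subcategory. The convergence criterion of Lemma \ref{lm:crit:convergence} applies formally in $\wh{K_0(\mathscr{S}_k)\otimes \bQ}^{\,\,\Poincabs}$, so it will suffice to establish, for each $n\geq 1$, a degree bound of the form $\deg_u(P_{S,n}(u)-1)\leq -3n$, where $P_{S,n}(u)$ denotes the image under $\Poincabs$ of the local factor indexed by $n$ in \eqref{eq:def:tam:mot:surf}.

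The essential new input replacing the $\ell$-adic isomorphism \eqref{eq:picxbarbis} will be the theorem of Kahn, Murre and Pedrini: under the assumption $A_0(S_{k(S)})=0$, the transcendental part of the Chow motive $h^2(S)$ vanishes rationally, yielding an isomorphism
$$h^2(S)\isom\left(\Pic(\clo{S})\otimes_{\bZ}\bQ\right)\otimes \bL$$
in $\chmk{\bQ}$, where $\Pic(\clo{S})\otimes_{\bZ}\bQ$ is viewed as an Artin motive via the equivalence \eqref{eq:cat:artin} and hence lies in $\mathscr{S}_k$. Combined with Poincaré duality for surfaces at the level of Chow motives, namely $h^{4-i}(S)\cong h^{i}(S)^{\vee}(-2)$, this gives in $K_0(\mathscr{S}_k)\otimes \bQ$ the identity $\left[\wedgeo{j} h^{2}(S)\right]=\left[\wedgeo{j}\Pic(\clo{S})^{\vee}\right]\bL^{j}$, which is the motivic counterpart of the $\ell$-adic computation performed in Section \ref{subsec:dem:thm:princ}.

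The degree bound will then follow exactly as in Section \ref{subsec:dem:thm:princ}. Indeed, Proposition \ref{prop:poinchPhi_d} applied to $\Poincabs$ (legitimate because Theorem \ref{thm:macdonald:motivique} depends only on the even/odd decomposition of the Chow--Künneth summands, known for surfaces after \cite{Kim:finite_dim}) yields an expansion
$$\frac{\Poincabs(\Phi_n(S))}{\bL^{2n}\,u^{4n}}=1+\bL^{-n}\,P_{b_2(S),n}\!\left(\left[\wedgeo{j}\Pic(\clo{S})\right]\right)u^{-2n}+u^{-3n}\,Q_0(u^{-1})$$
for some $Q_0$, whose $u^{-2n}$-coefficient is then cancelled by the corresponding coefficient in the expansion of $\prod_{\cC}\polcar_{\rhoNS,\{e\},\cC}(\bL^{-n}\,u^{-2n})^{\etak{k',G,\cC,n}}$, thanks to Lemma \ref{lm:eq:rel:trace:rho} applied in $\koAMk{\bQ}\otimes\bQ$ (and hence in $K_0(\mathscr{S}_k)\otimes\bQ$ via the natural embedding).

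The main obstacle I anticipate is twofold. First, one must verify that the auxiliary identities of Lemmas \ref{lm:chieqLchirho}--\ref{lm:eq:rel:trace:rho} remain valid when their common target $\kochmkq{\bQ}$ is replaced by $K_0(\mathscr{S}_k)\otimes\bQ$; this should be routine since these identities are formal consequences of the equivariant Chow realization applied to Artin motives, and all motives appearing lie in $\mathscr{S}_k$ by construction. Second, and more delicately, one must ensure that the Kahn--Murre--Pedrini isomorphism is packaged compatibly with the natural $\galabs{k}$-action, so that the equality $\left[\wedgeo{j}h^{2}(S)\right]=\left[\wedgeo{j}\Pic(\clo{S})^{\vee}\right]\bL^{j}$ holds as classes of Artin motives twisted by $\bL^{j}$ and thus matches the right-hand side of Lemma \ref{lm:eq:rel:trace:rho} applied to $\rhoNS$. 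Once these verifications are in place, the argument of Section \ref{subsec:dem:thm:princ} transfers word for word.
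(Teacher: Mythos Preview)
Your proposal is correct and follows essentially the same strategy as the paper: both invoke the Kahn--Murre--Pedrini decomposition (which, under the hypothesis $A_0(S_{k(S)})=0$, collapses to $h(S)\cong\ind\oplus\Pic(\clo{S})(-1)\oplus\ind(-2)$) and then rerun the degree estimate of Section~\ref{subsec:dem:thm:princ} with $\Poincabs$ in place of $\Poincl$. The only packaging difference is that the paper computes $\Zm(S)$ directly from this decomposition and extracts $\Poincabs(\Phi_n(S))$ via the logarithmic derivative, whereas you appeal to Proposition~\ref{prop:poinchPhi_d} for $\Poincabs$; the paper's route is slightly more economical since it avoids having to justify a MacDonald-type formula for the absolute Poincar\'e polynomial (and note that for a surface your detour through Poincar\'e duality is unnecessary, as $h^{2\dim(S)-2}(S)=h^2(S)$ is already identified by Kahn--Murre--Pedrini).
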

\begin{rem}
L'hypoth\`ese que $A_0(S_{k(S)})$ est nul est v\'erifi\'ee d\`es que
$A_0(S_{\clo{k(S)}})$ est nulle. Ceci vaut en particulier si $S$ est
$\clo{k(S)}$-rationnellement connexe, et donc si $S$ est une surface
de Fano.
\end{rem}
\begin{proof}
Comme $H^1(S,\ecO_S)=0$, la vari\'et\'e d'Albanese de $S$ est triviale.
D'apr\`es \cite[Propositions 14.2.1, 14.2.3 et Corollary 14.4.9 (a)]{KMP:trans}, on a une d\'ecomposition 
\begin{equation}
h(S)=\ind\oplus \Pic(\clo{S})(-1) \oplus t^2(S) \oplus \ind(-2)
\end{equation}
o\`u $t^2(S)$ est un motif de poids $2$ qui est nul si et seulement si
$A_0(S_{k(S)})$ est nul.
Ainsi on a 
\begin{equation}
\Zm(S)=\Zm(\ind,t)\,\Zm(\Pic(\clo{S}),\bL t)\,\Zm(\ind,\bL^2\,t)
\end{equation}
d'o\`u
\begin{equation}
\Zm(S)^{-1}=(1-t)(\sum_{n\geq 0} (-1)^n \symb{\Alt^n(\Pic(\clo{S}))}\,\bL^n t^n)\,(1-\bL^2,t)).
\end{equation}
On en d\'eduit l'analogue pour le polyn\^ome de Poincar\'e virtuel absolu
de la proposition \ref{prop:poinchPhi_d:mot} : pour tout $n\geq 1$, on a
\begin{equation}
\Poincabs(\Phi_n(S))
=1+ P_{b_2(S),n}\left(\symb{\wedgeo{j} \Pic(\clo{S})}\right)_{1\leq j\leq
  b_2(S)} \,u^{2\,n}+\bL^{2\,n}\,u^{4\,n}
\end{equation}
\`A partir de l\`a, il est facile de v\'erifier que toutes les \'egalit\'es de la d\'emonstration 
du th\'eor\`eme  \ref{thm:princ} ont lieu dans $K_0(\mathscr{S}_k)\otimes \bQ$ (et pas
seulement dans $\kogkqlq$).
\end{proof}

\backmatter

\bibliographystyle{smfalpha}
\providecommand{\bysame}{\leavevmode ---\ }
\providecommand{\og}{``}
\providecommand{\fg}{''}
\providecommand{\smfandname}{\&}
\providecommand{\smfedsname}{\'eds.}
\providecommand{\smfedname}{\'ed.}
\providecommand{\smfmastersthesisname}{M\'emoire}
\providecommand{\smfphdthesisname}{Th\`ese}

\end{document}